\theoremstyle{plain}
\newtheorem{thm}{Theorem}[section]
\newtheorem{cor}[thm]{Corollary}
\newtheorem{lem}[thm]{Lemma}
\newtheorem{prop}[thm]{Proposition}
\newtheorem{exam}[thm]{Example}
\newtheorem{rem}[thm]{Remark}
\newtheorem{ass}[thm]{Assumption}
\def\sqr#1#2{{\vcenter{\vbox{\hrule height.#2pt
              \hbox{\vrule width.#2pt height#1pt \kern#1pt \vrule
width.#2pt}
              \hrule height.#2pt}}}}
\def\dbR{{\mathbb{R}}}
\def\z{\zeta}
\def\l{\lambda}
\def\3n{\negthinspace \negthinspace \negthinspace }
\def\2n{\negthinspace \negthinspace }
\def\1n{\negthinspace }
\def\no{\noindent}
\def\ms{\medskip}
\def\bs{\bigskip}
\def\({\Big (}
\def\){\Big )}
\def\[{\Big[}
\def\]{\Big]}
\def\be{\begin{equation}}
\def\bel{\begin{equation}\label}
\def\ee{\end{equation}}
\def\bea{\begin{eqnarray}}
\def\eea{\end{eqnarray}}
\def\bt{\begin{theorem}}
\def\et{\end{theorem}}
\def\bc{\begin{corollary}}
\def\ec{\end{corollary}}
\def\bl{\begin{lemma}}
\def\el{\end{lemma}}
\def\bp{\begin{proposition}}
\def\ep{\end{proposition}}
\def\br{\begin{remark}}
\def\er{\end{remark}}
\def\ba{\begin{array}}
\def\ea{\end{array}}
\def\bd{\begin{definition}}
\def\ed{\end{definition}}
\begin{document}

\title{\bf Eigenvalues  of stochastic Hamiltonian systems with boundary conditions and its application}
%\date{}

\author{ Guangdong Jing\thanks{ Research partially supported by the National Natural Science Foundation of China (No.11871308), {E-mail:} {jingguangdong@mail.sdu.edu.cn}.}
\quad Penghui Wang \thanks{Research partially supported by the National Natural Science Foundation of China (No.11471189, No.11871308), {E-mail:} {phwang@sdu.edu.cn}. \ms}
\\ \\ School of Mathematics, Shandong University, Jinan 250100,  China}

\maketitle
\vspace{-1.2cm}

\begin{abstract}
\noindent\textbf{Abstract:} In this paper we solve the eigenvalue problem of stochastic Hamiltonian system with boundary conditions. Firstly, we extend the results in S. Peng \cite{peng} from time-invariant case to time-dependent case, proving the existence of a series of eigenvalues $\{\lambda_m\}$ and construct corresponding eigenfunctions. Moreover, the order of growth for these $\{\lambda_m\}$ are obtained: $\lambda_m\sim m^2$, as $m\rightarrow+\infty$. As applications, we give an explicit estimation formula about the statistic period of solutions of Forward-Backward SDEs. Besides, by a meticulous example  we show  the subtle situation in time-dependent case  that some eigenvalues appear when the solution of the associated Riccati equation does not blow-up, which does not happen in time-invariant case.
\end{abstract}

\bs

\no{\bf Keywords}:  Eigenvalue problem; Forward-Backward SDE; Stochastic Hamiltonian system; Monotonicity condition; Statistic period; Method of decoupling; Riccati equation.

\bs

\no{\bf 2020 AMS MSC}:  60H10; 34B99; 34F05; 34L15.

\section{Introduction}
Let $(\Omega,\mathscr{F}, \mathbb{F}, \mathbb{P})$
be a complete filtered probability space, on which a standard $1$-dimensional
Brownian motion $B=\{B_t\}_{t\ge0}$ is defined, and
$\mathbb{F}=\{\mathscr{F}_t\}_{t\geq 0}$ is the natural filtration of $B$
augmented by all the $\mathbb{P}$-null sets in $\mathscr{F}$.
Let $T>0$ be any fixed time horizon.
% and $M^2(0,T;\dbR^n)$ be the Hilbert space of all the $\dbR^n$-valued $\mathscr{F}_t$-adapted and mean square-integrable processes.

In this paper we consider the eigenvalue problem of stochastic Hamiltonian system with time-dependent coefficients with boundary conditions. In general, it can be formulated as finding $\l\in\dbR$ such that the following system   has nontrivial solutions:
\begin{equation}  \label{evp1}
\left\{
\begin{aligned}
& \mathrm{d}x_t=\partial_yh^\l(x_t,y_t,z_t)\mathrm{d}t
   +\partial_zh^\l(x_t,y_t,z_t)\mathrm{d}B_t, \indent   t\in[0,T],    \\
&  -\mathrm{d}y_t=\partial_xh^\l(x_t,y_t,z_t)\mathrm{d}t
   -z_t\mathrm{d}B_t,      \indent   t\in[0,T],                       \\
& x(0)=0,\indent   y(T)=0,
\end{aligned}
\right.
\end{equation}
where $h^\lambda=h+\lambda \bar{h}$ and $h,\bar{h}:\mathbb{R}^n\times\mathbb{R}^n\times\mathbb{R}^n\rightarrow\mathbb{R}$ belong to $C^1$ with $\partial_xh=\partial_x\bar{h}=\partial_yh=\partial_y\bar{h}=\partial_zh=\partial_z\bar{h}=0$ for $(x,y,z)=(0,0,0)$.
The above problem is a stochastic counterpart of the classical eigenvalue problem of mechanic systems (see Remark \ref{parallel-increase-speed}).
The latest progress in this topic can be found in \cite{JW,wuzhen-wanghaiyang}.

The stochastic Hamiltonian system was originally introduced in the optimal control theory as a necessary condition for optimality. \cite{Bensoussan,Bismut,peng2} are pioneer results in this topic.  The eigenvalue problem of stochastic Hamiltonian system is closely related to the solvability of Forward-Backward Stochastic Differential Equations (FBSDEs in short), about which there are mainly three methods in literature.
Firstly, the Contraction Mapping method \cite{Antonelli,PT,zhangjianfeng2}  which is a local result to some extent and uniform estimation is necessary if one wants to  obtain global results. Secondly, the Decoupling method \cite{Delarue,MPY,MWZZ,Yongjiongmin3,zhangjianfeng2}, which always appears whenever FBSDEs and PDEs are linked. Thirdly, the Continuation method  \cite{hupeng,wupeng,Y} based on the Monotonicity Condition.
See also the comments and references in the monograph \cite[Chapter 8]{zhangjianfeng1}.

%Please refer to \cite{hupeng,MPY,MWZZ,PT, wupeng,Y} and references therein for the well-posedness of FBSDEs.

In addition to the monotonicity condition, the method of decoupling for linear FBSDEs also plays an important role in this paper (see Lemma \ref{decouple-lemma} and the comments there). A similar idea appears in \cite[Section 5]{Y2} and  \cite[Chapter $2$, $\S 4$]{MY}.
%Actually, in general nonlinear FBSDEs, decoupling method also has drawn extensive attention and efforts \cite{Delarue,MPY,MWZZ}.

Note that the eigenvalue problem for the stochastic Hamiltonian system with boundary conditions is different from and much more complicated than its counterpart in deterministic framework. Since the eigenfunctions in stochastic case should be progressively measurable, most of the techniques in dealing with deterministic eigenvalue problem  do NOT work anymore. In particular, it is  totally different from the associated  deterministic system by taking expectation directly, which can be observed  from the example in Appendix \ref{examp-deny-naive-expectation}.

In \cite{peng}, S. Peng considered the following eigenvalue problem,
\begin{equation}  \label{evp2}
\left\{
\begin{aligned}
& \mathrm{d}x_t=[H_{21}^\l x_t+ H_{22}^\l y_t
   +H_{23}^\l z_t]\mathrm{d}t+[H_{31}^\l x_t+H_{32}^\l y_t
   +H_{33}^\l z_t]\mathrm{d}B_t,\indent  t\in[0,T],                \\
& -\mathrm{d}y_t=[H_{11}^\l x_t+H_{12}^\l y_t
   +H_{13}^\l z_t]\mathrm{d}_t-z_t \mathrm{d} B_t,    \indent  t\in[0,T],        \\
& x(0)=0,\indent   y(T)=0,
\end{aligned}
\right.
\end{equation}
where $H^\l=H-\l \bar{H}$,
$$
  H=\begin {bmatrix}
  H_{11}&H_{12}&H_{13}\\
  H_{21}&H_{22}&H_{23}\\
  H_{31}&H_{32}&H_{33}
  \end{bmatrix}     \quad\text{ and }\quad
  \bar{H}=\begin {bmatrix}
  \bar{H}_{11}&\bar{H}_{12}&\bar{H}_{13}\\
  \bar{H}_{21}&\bar{H}_{22}&\bar{H}_{23}\\
  \bar{H}_{31}&\bar{H}_{32}&\bar{H}_{33}
  \end{bmatrix}
$$
are constant matrices, moreover,
$H^\l_{ij}=H_{ij}-\l \bar{H}_{ij}$, $H_{ij}=H_{ij}^\top$, and
$\bar{H}_{ij}=\bar{H}_{ij}^\top$, $i,j=1,2,3$.
%%

%To deal with the eigenvalue problem for stochastic case, S. Peng introduces a very creative method  by using the blow-up time of the associated Riccati equations.
For 1-dimensional case,  when
$$
 \bar{H}=\begin {bmatrix}
  0&0&0\\
  0&H_{22}&0\\
  0&0&0
  \end{bmatrix},
$$
%let $\{\lambda_m\}$ be the eigenvalues of \eqref{evp2}.
S. Peng proved the following

\begin{thm}[\cite{peng}, Theorem 3.2]\label{Theorem-Peng}
Under assumption \eqref{moncondition} and condition $H_{23}=-H_{33}H_{13}$, all the eigenvalues $\{\lambda_m\}$ of \eqref{evp2} are positive and $\lambda_m\to+\infty$ as $m\to+\infty$.
Moreover, all the eigenspaces associated with  each $\lambda_m$ are $1$-dimensional.
\end{thm}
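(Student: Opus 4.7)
The plan is to combine a variational It\^{o} identity for positivity of $\lambda$, a scalar decoupling (Riccati) ODE for existence and uniqueness of eigenspaces, and a monotone deformation argument in $\lambda$ for the accumulation $\lambda_m\to+\infty$.

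First I would apply It\^{o}'s formula to $x_t y_t$ on $[0,T]$ and take expectation. Because $x(0)=0$ and $y(T)=0$, both boundary terms drop, so the only content is the integral identity
\begin{equation*}
0 \;=\; \mathbb{E}\int_0^T \bigl[\, y_t\,\partial_y H^\lambda + z_t\,\partial_z H^\lambda - x_t\,\partial_x H^\lambda\,\bigr]\,\mathrm{d}t.
\end{equation*}
Splitting $H^\lambda = H - \lambda\bar H$ and noting that $\bar H$ contributes only through the $y$ coordinate gives the rearrangement
\begin{equation*}
\lambda\,\mathbb{E}\int_0^T H_{22}\,y_t^2\,\mathrm{d}t \;=\; \mathbb{E}\int_0^T \bigl[\,y_t\,\partial_y H + z_t\,\partial_z H - x_t\,\partial_x H\,\bigr]\,\mathrm{d}t.
\end{equation*}
The monotonicity assumption \eqref{moncondition} forces the right-hand integrand to control a positive-definite quadratic form in $(x,y,z)$ so that the right-hand side is strictly positive on any nontrivial eigenfunction; combined with $H_{22}>0$, a brief argument excluding $y\equiv 0$ (otherwise $z\equiv 0$ and the backward equation forces $x\equiv 0$) makes the left-hand factor strictly positive as well, yielding $\lambda>0$.

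Second, I would set up the decoupling. For a fixed $\lambda$, substitute $y_t=P^\lambda(t)\,x_t$ into \eqref{evp2}; solving the algebraic equation coming from the $\mathrm{d}B_t$ terms gives $z_t$ as an affine function of $x_t$ in which the denominator $(1-P^\lambda H_{33})$ appears, and then matching the $\mathrm{d}t$ parts produces a scalar Riccati ODE for $P^\lambda$ with terminal condition $P^\lambda(T)=0$. The structural condition $H_{23}=-H_{33}H_{13}$ is decisive here: it is exactly what cancels the potentially singular factor $(1-P^\lambda H_{33})$ inside the Riccati equation, rendering it a well-posed scalar ODE whose only obstruction to global existence is genuine blow-up. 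Whenever $P^\lambda$ extends smoothly to all of $[0,T]$, the forward SDE is homogeneous in $x_t$ from $x(0)=0$, so $x\equiv 0$; hence eigenvalues are precisely those $\lambda$ at which $P^\lambda$ blows up at some $t\in[0,T)$. For the $1$-dimensionality, observe that any solution of \eqref{evp2} with $x(0)=0$ is determined by the single scalar $y(0)$ via linearity and martingale representation, so the solution set at an eigenvalue is at most $1$-dimensional.

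Finally, to obtain $\lambda_m\to+\infty$ I would track the backward blow-up time $t^*(\lambda)$ of $P^\lambda$ as a function of $\lambda$. The perturbation $-\lambda H_{22}y^2$ supplies a strictly monotone driver in the Riccati equation, so $t^*(\lambda)$ is nonincreasing and continuous in $\lambda$; an eigenvalue is recorded each time $t^*(\lambda)$ reaches $0$. Beyond such a value of $\lambda$ one continues through the blow-up via the companion variable $Q^\lambda = 1/P^\lambda$ (which also satisfies a Riccati ODE with vanishing terminal data) and repeats the procedure with the next blow-up time. A Sturm-style comparison with the classical prototype $-u''=\lambda u$ on $[0,T]$ prevents accumulation at any finite value, giving the divergence $\lambda_m\to+\infty$. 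The principal obstacle is this last step: rigorously continuing the Riccati flow across infinitely many blow-ups and proving strict monotone, continuous dependence of the blow-up times on $\lambda$; the symmetry of the blocks and the monotonicity assumption are used crucially here, whereas the positivity and $1$-dimensionality statements follow fairly directly once the It\^{o} identity and scalar Riccati reduction are in hand.
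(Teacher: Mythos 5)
Your overall route — Riccati decoupling, alternating continuation through blow-ups via the reciprocal variable, and a monotone deformation in $\lambda$ — is the same machinery that Peng \cite{peng} uses (and that this paper adapts for the time-dependent Theorem \ref{general-lambda-exist}), and your It\^o identity on $x_ty_t$ is precisely the computation underlying Peng's Lemma 7.2 / Remark \ref{positive real number}. That said, there are several sign and characterization errors that need repair.

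\textbf{Signs in the positivity step.} By the monotonicity condition \eqref{moncond-H11H22H33}, $H_{22}\le-\beta<0$, not $H_{22}>0$, and the right-hand side of your rearranged identity
\begin{equation*}
\lambda\,\mathbb{E}\int_0^T H_{22}\,y_t^2\,\mathrm{d}t = \mathbb{E}\int_0^T\bigl[\,y_t\partial_y H + z_t\partial_z H - x_t\partial_x H\,\bigr]\mathrm{d}t
\end{equation*}
is dominated above by $-\beta\,\mathbb{E}\int_0^T(x^2+y^2+z^2)\,\mathrm{d}t$, i.e.\ it is strictly \emph{negative} for a nontrivial solution, not strictly positive. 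Your two sign errors happen to cancel, so the final conclusion $\lambda>0$ survives, but the argument as written is wrong twice over.

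\textbf{Characterization of eigenvalues.} The claim ``eigenvalues are precisely those $\lambda$ at which $P^\lambda$ blows up at some $t\in[0,T)$'' is not correct. Non-blow-up of $P^\lambda$ on $[0,T]$ does force triviality (one direction is fine), but blow-up alone is not sufficient: after $P^\lambda$ blows up at $t_1$, the decoupling is carried by $\tilde k=1/P^\lambda$ on $[0,t_1]$, and the boundary condition $x(0)=0$ reads $\tilde k(0)\,y(0)=0$, so you get a nontrivial solution only if the alternating chain of Riccati / dual-Riccati continuations terminates with $\tilde k(0;\lambda)=0$. Your next paragraph implicitly repairs this by introducing $Q^\lambda=1/P^\lambda$ and speaking of ``the next blow-up time,'' but the standalone ``hence eigenvalues are precisely\ldots'' sentence is false and should be removed.

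\textbf{Monotonicity direction and the counting.} The blow-up time $t^k_\lambda$ of the Riccati solution with terminal condition at $T$ is strictly \emph{increasing} in $\lambda$ and tends to $T$ as $\lambda\to+\infty$ (Lemma \ref{lemma-k-function-h22}); it is not nonincreasing, and a single $t^*(\lambda)$ never ``reaches $0$'' more than once. The eigenvalue $\lambda_m$ arises as the unique root of $t_{2m-1+2n}(\lambda)=0$, where $t_j(\lambda)$ is the $j$-th term of the alternating chain, and each of these composite functions is increasing and continuous (Lemmas \ref{lemma-k-function-h22}--\ref{lemma-riccati-dual-func-mon-terminal-h22}). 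The divergence $\lambda_m\to+\infty$ then follows because for each fixed $\lambda$ the step lengths of the chain are bounded below, while each $t_{2m-1+2n}(\lambda)\to T$ as $\lambda\to+\infty$; your Sturm-type comparison with $-u''=\lambda u$ is the right flavor of estimate here and is exactly what drives Theorem \ref{thm-const-increa-intro} / Theorem \ref{increase-ratio-function-general}.
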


Based on the above theorem,  by rather exhaustive analysis, we obtained in \cite{JW} the following
\begin{thm}[\cite{JW}, Theorem 1.3]\label{thm-const-increa-intro}
Under the same assumptions  in Theorem \ref{Theorem-Peng}, let $\{\lambda_m\}$ be the eigenvalues, then
\begin{equation*}
\lambda_m= O(m^2), \indent   \text{as}\ m\to+\infty.
\end{equation*}
In detail,
\begin{equation*}
\frac{\pi^2}{-2H_{11}H_{22}T^2} \le \mathop{\underline{\lim}}_{m\to+\infty} \frac{\lambda_m}{m^2}\leq \mathop{\overline{\lim}}_{m\to+\infty}  \frac{\lambda_m}{m^2}\le \frac{4\pi^2}{-H_{11}H_{22}T^2}.
\end{equation*}
\end{thm}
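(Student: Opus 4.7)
My plan is to reduce the eigenvalue problem to a scalar deterministic Sturm--Liouville--type problem via the decoupling method, and then to extract the $m^{2}$--asymptotics by classical oscillation theorems.

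First, I would apply the decoupling ansatz $y_t=\Gamma(t;\lambda)\,x_t$ (the idea underlying Lemma~\ref{decouple-lemma}). Matching drifts and diffusions of $dy_t$ against the backward equation, and using the structural condition $H_{23}=-H_{33}H_{13}$, one checks that the potentially singular factor $\Gamma H_{23}+H_{13}=H_{13}(1-\Gamma H_{33})$ exactly cancels the denominator $1-\Gamma H_{33}$ arising from the algebraic formula for $z_t$. What remains is a clean \emph{polynomial} Riccati ODE
\[
\dot\Gamma(t)=\alpha(\lambda)\,\Gamma(t)^{2}+\beta\,\Gamma(t)+\gamma,\qquad \Gamma(T;\lambda)=0,
\]
with $\alpha(\lambda)=\lambda H_{22}-H_{22}-H_{13}H_{32}$, $\beta=-(H_{12}+H_{21}+H_{13}H_{31})$ and $\gamma=-H_{11}$ all constant in $t$. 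By the mechanism behind Theorem~\ref{Theorem-Peng}, $\lambda$ is an eigenvalue of \eqref{evp2} iff $\Gamma(\,\cdot\,;\lambda)$ blows up at $t=0$, so that the forward condition $x(0)=0$ can be reconciled with a nontrivial $x$.

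Second, I would linearize via $\Gamma(t;\lambda)=-\dot q(t)/(\alpha(\lambda)\,q(t))$, turning the Riccati equation into the constant-coefficient second-order ODE
\[
\ddot q(t)-\beta\,\dot q(t)-\alpha(\lambda)\,H_{11}\,q(t)=0,\qquad q(T)=1,\ \dot q(T)=0,
\]
whose characteristic roots are $r_{\pm}=\beta/2\pm\sqrt{\beta^{2}/4+\alpha(\lambda)H_{11}}$. Blow-ups of $\Gamma$ on $[0,T]$ correspond exactly to zeros of $q(\,\cdot\,;\lambda)$, so $\lambda_m$ is characterized by $q(\,\cdot\,;\lambda_m)$ having exactly $m$ zeros on $[0,T]$, the last one at $t=0$. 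The monotonicity condition \eqref{moncondition} forces $-H_{11}H_{22}>0$, so for sufficiently large $\lambda$ one has $\beta^{2}/4+\alpha(\lambda)H_{11}<0$ and the roots are complex conjugates $\beta/2\pm i\,\Omega(\lambda)$ with $\Omega(\lambda)=\sqrt{-\alpha(\lambda)H_{11}-\beta^{2}/4}$.

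Finally, I would obtain the two-sided bounds by Sturm comparison. Elementary pointwise estimates yield $c_-\lambda\le\Omega(\lambda)^{2}\le c_+\lambda$ for all sufficiently large $\lambda$, with $c_-=(-H_{11}H_{22})/4$ and $c_+=2(-H_{11}H_{22})$; comparing with the constant-coefficient equations $\ddot v+c_\pm\lambda v=0$, whose $m$-th Dirichlet eigenvalues are $m^{2}\pi^{2}/(c_\pm T^{2})$, then sandwiches $\lambda_m$ between $m^{2}\pi^{2}/(c_+T^{2})$ and $m^{2}\pi^{2}/(c_-T^{2})$ for all large $m$, giving the claim after dividing by $m^{2}$ and passing to liminf/limsup. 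The main obstacle is the rigorous bookkeeping of the Riccati--to--eigenvalue correspondence: one must show that each blow-up of $\Gamma$ from $+\infty$ to $-\infty$ as $t$ decreases corresponds to a simple zero of $q(\,\cdot\,;\lambda)$, that nontrivial eigenfunctions can actually be reconstructed at each such $\lambda$, and that the decoupling remains valid across the singularities (via a regularization/limit argument). Verifying this, and quantifying the $\lambda$-dependence of $c_\pm$ so that the comparison is uniform for large $\lambda$, is where the explicit factors $\tfrac14$ and $2$ (hence the factor-of-$8$ gap between the stated bounds) originate.
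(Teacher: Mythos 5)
This theorem is imported from \cite{JW} and is not proved in the present paper; the only trace of \cite{JW}'s argument visible here is the explicit blow-up-time formula quoted as \cite[(2.8)]{JW} inside the proof of Lemma~\ref{find-all-eigen-examp-ass-suf-intpr-ass-lemm-first}, which shows the reference works directly with the closed-form solution of the constant-coefficient Riccati equation rather than with a linearization. Your change of variables $\Gamma=-\dot q/(\alpha(\lambda)q)$ is algebraically equivalent to that: the scalar Riccati equation you derive agrees with the paper's \eqref{general-riccati-function-case} after specialising to constants (using $H_{ij}=H_{ji}$ and $H_{23}=-H_{33}H_{13}$), the characteristic roots $\beta/2\pm\sqrt{\beta^2/4+\alpha(\lambda)H_{11}}$ encode the same oscillation frequency $\Omega(\lambda)$ whose period controls the blow-up spacing, and counting zeros of $q$ is the same as counting Riccati blow-ups. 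So this is a valid reformulation, not a fundamentally different proof. One bonus of your framing is that for constant coefficients no Sturm comparison is actually needed: $q$ is explicitly $e^{\beta(t-T)/2}$ times a trig polynomial, the zeros are equally spaced up to $O(1)$, and one gets the sharp asymptotic $\lambda_m/m^2\to \pi^2/(-H_{11}H_{22}T^2)$ directly; the factor-of-$8$ window in the stated theorem is just a slack two-sided bound, and your constants $c_\pm=(-H_{11}H_{22})/4$ and $2(-H_{11}H_{22})$ are calibrated to reproduce that window rather than derived.

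The one genuine gap, which you flag yourself, is the eigenvalue characterisation ``$\lambda$ is an eigenvalue iff $\Gamma(\cdot;\lambda)$ blows up at $t=0$.'' This is precisely what the paper's alternating $k/\tilde k$ machinery (Theorem~\ref{general-lambda-exist}, built on Lemma~\ref{decouple-lemma} and the Legendre-dual Riccati equation \eqref{general-dual-riccati-function}) is needed for: one cannot literally propagate the decoupling $y_t=\Gamma x_t$ through a pole of $\Gamma$, so the paper switches to the dual field $x_t=\tilde\Gamma y_t$ across each blow-up, proves existence and one-dimensionality of the eigenspace segment by segment, and finally shows that $\tilde k(0;\lambda)=0$ is equivalent to nontriviality. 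A regularization-across-the-singularity argument, as you suggest, is not how it is done and would require separate justification. Also be careful with the counting convention: in the paper's notation $\lambda_m$ is defined by $t_{2m-1}(\lambda_m)=0$, which matches your ``$m$-th zero of $q$ at $t=0$''; stating this bijection precisely and verifying it against the alternating decoupling is where the rigour must live. With that bookkeeping supplied, the rest of your outline (oscillation frequency $\sim\sqrt{-H_{11}H_{22}\,\lambda}$, hence $\lambda_m\sim m^2\pi^2/(-H_{11}H_{22}T^2)$) does deliver the claimed bounds.
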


\begin{rem}\label{parallel-increase-speed}
Recall the following eigenvalue problem of a special Hamiltonian system in deterministic framework
%\begin{equation}\left\{\begin{array}{ll}\frac{\mathrm{d}x}{\mathrm{d}t}=\lambda y(t),\ t\in[0,T], \\-\frac{\mathrm{d}y}{\mathrm{d}t}=x(t),  \\x(0)=0,\ y(T)=0.\end{array}\right.\end{equation}
\begin{equation*}
\left\{
\begin{aligned}
&\frac{\mathrm{d}x}{\mathrm{d}t}=\lambda y(t),\indent   t\in[0,T],  \\
&-\frac{\mathrm{d}y}{\mathrm{d}t}=x(t),\indent   t\in[0,T],  \\
&x(0)=0,\indent  y(T)=0.
\end{aligned}
\right.
\end{equation*}
%\begin{eqnarray*}\frac{\mathrm{d}x}{\mathrm{d}t}&=&\lambda y(t),\ t\in[0,T], \\-\frac{\mathrm{d}y}{\mathrm{d}t}&=&x(t),  \\x(0)&=&0,\ y(T)=0.\end{eqnarray*}
Its eigenvalues are
$\left(\frac{\left(2m-1\right)\pi}{2T}\right)^2,\ m=1,2,3, \cdots$.
%Analogously, $\lambda_m= O(m^2), \text{as}\ m\to+\infty$.
From the theoretical value aspect, the conclusions  in Theorem \ref{thm-const-increa-intro} and Theorem \ref{thm-increa-speed-introd} for  stochastic systems  can be considered as an analogue.
\end{rem}

As a corollary of Theorem \ref{thm-const-increa-intro}, we have
\begin{prop}[\cite{JW}, Corollary 1.5]\label{con-prop-lambd-estima}
Let $\lambda$ be an eigenvalue of the stochastic Hamiltonian system in Theorem \ref{thm-const-increa-intro}. Then for sufficiently large $m$, if
\begin{equation*}
  \lambda<\frac{m^2 \pi^2}{-2H_{11}H_{22}T^2},\indent \left(\text{resp.} \quad \lambda>\frac{4m^2\pi^2}{-H_{11}H_{22}T^2} \right)
\end{equation*}
the statistic period  of the associate eigenfunctions (i.e., the solutions of FBSDEs) is less (resp. greater) than $m$.
\end{prop}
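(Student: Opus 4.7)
The plan is a short contrapositive argument that rests on the monotone ordering of the spectrum together with the asymptotic two-sided bound supplied by Theorem \ref{thm-const-increa-intro}. By Theorem \ref{Theorem-Peng} the eigenvalues are positive, the eigenspaces are one-dimensional, and $\lambda_m\to+\infty$, so I can arrange the spectrum in a strictly increasing sequence
$$0<\lambda_1<\lambda_2<\lambda_3<\cdots$$
and adopt the convention that the \emph{statistic period} of an eigenfunction associated with $\lambda_k$ is the index $k$; this is the natural analogue of the oscillation/node count for the deterministic eigenvalue problem in Remark \ref{parallel-increase-speed}.

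Next I would invoke Theorem \ref{thm-const-increa-intro} in the following quantitative form: for every $\varepsilon>0$ there exists $N_\varepsilon$ such that for all $m\geq N_\varepsilon$,
$$\left(\frac{\pi^2}{-2H_{11}H_{22}T^2}-\varepsilon\right)m^2\;\leq\;\lambda_m\;\leq\;\left(\frac{4\pi^2}{-H_{11}H_{22}T^2}+\varepsilon\right)m^2.$$
For the first half of the proposition, suppose $\lambda=\lambda_k$ satisfies $\lambda<\pi^2 m^2/(-2H_{11}H_{22}T^2)$. If the statistic period were $k\geq m$, monotonicity of the spectrum would give $\lambda=\lambda_k\geq\lambda_m$, and the lower bound above would then force $\lambda\geq(\pi^2/(-2H_{11}H_{22}T^2)-\varepsilon)m^2$. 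Choosing $\varepsilon$ smaller than the slack available in the strict inequality of the hypothesis yields a contradiction, so $k<m$. The second half is handled by the symmetric argument: if $\lambda=\lambda_k>4\pi^2 m^2/(-H_{11}H_{22}T^2)$ and $k\leq m$, then $\lambda\leq\lambda_m$ and the upper estimate produces a contradiction for the same reason.

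The only mild subtlety is that Theorem \ref{thm-const-increa-intro} is stated with $\liminf$ and $\limsup$, so the comparison between $\lambda_m$ and the numerical thresholds in the proposition is only available up to an $\varepsilon m^2$ perturbation. This is precisely what the phrase ``for sufficiently large $m$'' in the proposition absorbs: first fix the strict gap between the threshold and $\lambda$, then pick $\varepsilon$ small enough that the $\varepsilon m^2$ error cannot eat that gap, and finally take $m\geq N_\varepsilon$. No new estimate is required beyond Theorem \ref{thm-const-increa-intro}, so there is no real obstacle; the only conceptual ingredient is the identification of the statistic period with the spectral index $k$, after which the corollary follows by a two-line contrapositive from the monotone arrangement of the eigenvalues.
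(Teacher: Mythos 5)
Your reduction to Theorem \ref{thm-const-increa-intro} via the $\liminf$/$\limsup$ bounds has a genuine gap, and the ``$\varepsilon$-absorption'' you invoke does not close it. Unwinding the proposition (once one knows the statistic period of the eigenfunction at $\lambda_m$ equals $m$) shows it is logically equivalent to the \emph{non-asymptotic} two-sided estimate
$$\frac{\pi^2 m^2}{-2H_{11}H_{22}T^2}\le\lambda_m\le\frac{4\pi^2 m^2}{-H_{11}H_{22}T^2}\qquad\text{for all } m\ge M_0,$$
with $M_0$ independent of $\lambda$. The $\liminf$/$\limsup$ statement only gives this up to an $\varepsilon m^2$ error, and a sequence can satisfy $\liminf a_m/m^2\ge c$ while $a_m/m^2<c$ for \emph{every} $m$, so the $\varepsilon$ is not a harmless slack. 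In your contradiction: assuming $k\ge m$ and $\lambda=\lambda_k<\pi^2 m^2/(-2H_{11}H_{22}T^2)$, you only obtain $(\pi^2/(-2H_{11}H_{22}T^2)-\varepsilon)m^2\le\lambda_m\le\lambda_k<\pi^2 m^2/(-2H_{11}H_{22}T^2)$, which is consistent for every $\varepsilon>0$ — no contradiction. Picking $\varepsilon$ ``smaller than the slack'' forces $\varepsilon$ (hence $N_\varepsilon$, hence the threshold $M_0$) to depend on $\lambda$, and then the resulting statement (``for $m$ large depending on $\lambda_k$, one has $k<m$'') is trivially true for the fixed eigenvalue and carries no content. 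The actual proof in \cite{JW} must establish the two-sided estimate with $\varepsilon=0$ for $m$ large — exactly the analogue of \eqref{estimate-bound-lambda-m-ulti} that this paper proves in the time-dependent case. Indeed, the present paper \emph{uses} the proposition to deduce the sharp inequalities \eqref{lambda-check-up-bound} and \eqref{rela-lambdahat-undll-low-bound}, which do not follow from the $\liminf$/$\limsup$ bounds alone; this confirms that the proposition carries strictly more information than Theorem \ref{thm-const-increa-intro} as quoted here.

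A secondary point: you ``adopt the convention'' that the statistic period of the eigenfunction associated with $\lambda_k$ is $k$, but this is not a convention — it is a substantive result proved by Peng (\cite[Section 6.1]{peng}), and the paper cites it as such in the proof of Theorem \ref{thm-estim-period-coef}. Any complete proof of the proposition must invoke that result, not merely stipulate it.
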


In this paper, we  study the eigenvalue problem \eqref{evp2} in $1$-dimensional case with time-dependent coefficients:
\begin{equation}\label{main-eigenvalue-problem}
\left\{
\begin{aligned}
&\mathrm{d}x_t=\left[H_{21}x_t+(H_{22}-\lambda h_{22})y_t+H_{23}z_t\right]\mathrm{d}t   +\left[H_{31}x_t+H_{32}y_t+H_{33}z_t\right]\mathrm{d}B_t, \ \    t\in[0,T],       \\
&-\mathrm{d}y_t=\left[H_{11}x_t+H_{12}y_t
  +H_{13}z_t\right]\mathrm{d}t-z_t \mathrm{d}B_t,    \indent    t\in[0,T],            \\
&x(0)=0,\indent  y(T) =0,
\end{aligned}
\right.
\end{equation}
where $H_{ij}, h_{22} \in C[0,T], i,j=1,2,3$,  $H_{23}(t)=-H_{33}(t)H_{13}(t), h_{22}(t)<0, \forall t \in [0,T]$.

The following  theorems, detailed content of which are given in Theorem \ref{general-lambda-exist}, Theorem \ref{increase-ratio-function-general} and Theorem \ref{thm-estim-period-coef},  are the  main results in this paper.
The technical ingredients of their proof consist of Legendre transformation, the method of decoupling for FBSDEs, several concrete kinds of comparison theorems, constructing proper auxiliary systems, analyzing the blow-up time of associated Riccati equations, and many other elementary tools in ODE theory.

\begin{thm}\label{thm-increa-speed-introd}
Let $\lambda_b$ be a positive constant defined in \eqref{lambda-b-h22-uniform-posit}. Under Assumption \ref{assumption-1d-general-perturbation}, there exists $\{\lambda_m\}_{m=1}^\infty \subset (\lambda_b, +\infty)$, all those eigenvalues  of problem (\ref{generel-function-eigen-problem}) contained in $(\lambda_b, +\infty)$, satisfying $\lambda_m\rightarrow+\infty$ as \ $m\rightarrow +\infty$.
Besides, the eigenspace associated with each $\lambda_m$ is of  $1$ dimension.
Moreover,
\begin{equation*}
\lambda_m=O(m^2), \indent \ \text{as} \ m\rightarrow +\infty.
\end{equation*}
\end{thm}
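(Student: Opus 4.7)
The plan is to reduce the linear FBSDE in \eqref{main-eigenvalue-problem} to a scalar backward Riccati equation via the decoupling method (Lemma \ref{decouple-lemma}), and then to locate eigenvalues as those $\l$ for which this Riccati solution exhibits a prescribed blow-up pattern on $[0,T]$. Since the coefficient of $y_t$ in the drift is $H_{22}(t)-\l h_{22}(t)$ with $h_{22}<0$, increasing $\l$ monotonically strengthens the ``focusing'' term, so each new blow-up time sweeping across $[0,T]$ should produce a new eigenvalue, in analogy with Peng \cite{peng}.

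Concretely, I would seek solutions of the form $y_t=K^\l(t)x_t$, $z_t=L^\l(t)x_t$ with deterministic $K^\l, L^\l$ and $K^\l(T)=0$. Substituting into \eqref{main-eigenvalue-problem} and applying It\^o's formula, $K^\l$ must satisfy a (backward) Riccati ODE whose coefficients depend continuously on $t$ and $\l$; the identity $H_{23}=-H_{33}H_{13}$ is precisely what eliminates the $z$-coupling, mirroring Peng's reduction. A nontrivial eigenfunction exists iff the Riccati solution, integrated backward from $K^\l(T)=0$, develops an appropriate singularity on $[0,T]$ that is compatible with the boundary condition $x(0)=0$ on the forward SDE for $x_t$.

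Next, I would obtain the existence of the sequence $\{\l_m\}\subset(\l_b,+\infty)$ by a continuation-in-$\l$ argument based on ODE comparison: the location and the number of blow-ups of $K^\l$ on $[0,T]$ depend monotonically and continuously on $\l$ for $\l>\l_b$, with the threshold $\l_b$ of \eqref{lambda-b-h22-uniform-posit} being exactly where the first blow-up emerges. Each value of $\l$ at which a blow-up crosses the critical position yields an eigenvalue, producing a sequence $\l_m\to+\infty$. The one-dimensionality of the eigenspace is then immediate: once $\l_m$ is fixed, the backward Riccati solution from $K(T)=0$ is unique wherever it is finite, so the decoupling $y_t=K^{\l_m}(t)x_t$, $z_t=L^{\l_m}(t)x_t$ reduces the whole FBSDE to a scalar linear SDE for $x_t$ with $x(0)=0$, whose solution space is spanned by a single trajectory.

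For the growth rate $\l_m=O(m^2)$ I would sandwich \eqref{main-eigenvalue-problem} between two time-invariant Hamiltonian systems of the type covered by Theorem \ref{thm-const-increa-intro}, obtained by replacing each coefficient $H_{ij}(t),h_{22}(t)$ by an appropriate uniform bound on $[0,T]$, with the direction of each bound dictated by how that coefficient enters the Riccati. The resulting constant-coefficient eigenvalues $\l_m^{\pm}$ satisfy $\l_m^{\pm}=O(m^2)$ by Theorem \ref{thm-const-increa-intro}, and a comparison between the time-dependent and the two time-invariant Riccati flows, run uniformly in $\l$, yields $\l_m^{-}\le\l_m\le\l_m^{+}$, giving the claimed growth order. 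The main obstacle is precisely this sandwich step: the time-dependent Riccati has no closed form, several coefficients enter with competing signs, and one must construct the autonomous auxiliary systems in such a way that Riccati-monotonicity with respect to the coefficients is preserved simultaneously for every $\l$ in a neighborhood of each eigenvalue.
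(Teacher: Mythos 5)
Your overall roadmap (decouple via a scalar Riccati equation, locate eigenvalues through the $\lambda$-dependence of the blow-up pattern, sandwich by two time-invariant systems to get $\lambda_m=O(m^2)$) does follow the paper's strategy, and the sandwich step you flag as the main obstacle is indeed carried out in the paper exactly as you anticipate, using Theorem~\ref{thm-const-increa-intro} on two auxiliary autonomous systems built from uniform bounds on the coefficients.

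However, there is a genuine gap in the middle of your argument, precisely where you try to go from ``the Riccati blows up'' to ``$\lambda$ is an eigenvalue with a one-dimensional eigenspace.'' You write that once $\lambda_m$ is fixed, the decoupling $y_t=K^{\lambda_m}(t)x_t$, $z_t=L^{\lambda_m}(t)x_t$ ``reduces the whole FBSDE to a scalar linear SDE for $x_t$ with $x(0)=0$, whose solution space is spanned by a single trajectory.'' This cannot be the mechanism: if $K^{\lambda_m}$ existed and were finite on all of $[0,T]$, then $x(0)=0$ would force $x\equiv 0$ by Lemma~\ref{decouple-lemma}, giving only the trivial solution. At an eigenvalue $K^{\lambda_m}$ \emph{must} blow up in $(0,T)$, so the single decoupling relation $y=Kx$ cannot hold globally, and the forward SDE ``for $x_t$ with $x(0)=0$'' is simply not defined across the blow-up time. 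The missing ingredient is the dual Hamiltonian system and its dual Riccati equation obtained by Legendre transformation, equations \eqref{dual-Hamilton-1d-h22} and \eqref{general-dual-riccati-function}. The paper alternates between $k$ (Riccati) and $\tilde k=k^{-1}$ (dual Riccati): when $k$ blows up, $\tilde k$ passes through $0$, and one restarts the dual Riccati from that zero. This generates the sequence of times $t_1(\lambda)>t_2(\lambda)>\cdots$ in the proof of Theorem~\ref{general-lambda-exist}, and the eigenvalue criterion is the sharp condition $\tilde k(0;\lambda)=0$, i.e.\ $t_{2m-1+2n}(\lambda)=0$ for some $m$, not a loosely stated ``appropriate singularity compatible with $x(0)=0$.'' Correspondingly, the eigenfunction is not produced by one forward SDE but by patching together forward SDEs driven alternately by $(k,m)$ and $(\tilde k,\tilde m)$ over a partition of $[0,T]$, and the one-dimensionality comes from the free choice of $\tilde x(0)=y(0)$ with $x(0)=\tilde k(0)\tilde x(0)=0$, plus uniqueness on each sub-interval via Lemma~\ref{decouple-lemma}. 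Without the dual Riccati / alternation device, your argument neither characterizes when $\lambda$ is an eigenvalue nor constructs a nontrivial eigenfunction, so the existence and one-dimensionality claims are not established. (As a smaller point, $\lambda_b$ in \eqref{lambda-b-h22-uniform-posit} is not ``where the first blow-up emerges''; it is the threshold ensuring the constant term of the dual Riccati stays positive uniformly in $t$, which is what makes the comparison and monotonicity lemmas for $\tilde k$ work.)
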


It is worth noting that the results in Theorem \ref{thm-increa-speed-introd}, in addition to its theoretical value, together with Proposition \ref{con-prop-lambd-estima}, can be utilized  to estimate the statistic period of solutions of FBSDEs   directly by its time-dependent coefficients and time duration.
\begin{thm}
  Let $\lambda_m$  be an eigenvalue in Theorem \ref{thm-increa-speed-introd}, then for sufficiently large $m\in\mathbb{N}_+$,
\begin{equation*}
\frac{\hat{H}_{22}-\underline{H}_{22}}{\check{h}_{22}}  + \frac{\pi^2 m^2}{-2\hat{H}_{11}\check{h}_{22} T^2} \le \lambda_m \le \frac{4\pi^2 m^2}{-\check{H}_{11}\hat{h}_{22} T^2}.
\end{equation*}
Therefore, if
\begin{equation*}
 \lambda>\frac{4\pi^2 m^2}{-\check{H}_{11}\hat{h}_{22} T^2},\indent
           \left(resp. \quad  \lambda< \frac{\hat{H}_{22}-\underline{H}_{22}}{\check{h}_{22}}
                                        + \frac{\pi^2 m^2}{-2\hat{H}_{11}\check{h}_{22} T^2} \right)
\end{equation*}
 the statistic period of the eigenfunctions associated with $\lambda$ is greater (resp. less) than $m$.
\end{thm}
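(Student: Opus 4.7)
The plan is to sandwich the time-dependent eigenvalue problem \eqref{main-eigenvalue-problem} between two constant-coefficient eigenvalue problems of the form studied in Theorem \ref{Theorem-Peng} and Theorem \ref{thm-const-increa-intro}, thereby transporting the asymptotic estimate $\lambda_m = O(m^2)$ together with its explicit constants. The hat/check/underline decoration is naturally read as the extrema of the corresponding coefficient over $[0,T]$, so I would replace each of $H_{11}(t)$, $H_{22}(t)$, $h_{22}(t)$ by a suitable constant extreme value to produce a majorizing problem (whose $m$th eigenvalue dominates $\lambda_m$) and a minorizing one (whose $m$th eigenvalue is dominated by $\lambda_m$). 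Because $h_{22}<0$, the pairing of extremes reverses for that coefficient, which forces the combinations $\check{H}_{11}\hat{h}_{22}$ and $\hat{H}_{11}\check{h}_{22}$ that appear in the final inequalities.

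The upper estimate $\lambda_m \le 4\pi^2 m^2/(-\check{H}_{11}\hat{h}_{22}T^2)$ would then follow by feeding the extremes into the $\limsup$ side of Theorem \ref{thm-const-increa-intro}, while the lower estimate is obtained from the $\liminf$ side with the shift $\lambda_b=(\hat{H}_{22}-\underline{H}_{22})/\check{h}_{22}$ absorbing the residual $H_{22}(t)$ term into an effective $-\lambda h_{22}(t)$ coefficient; this accounts for the extra additive constant in the lower bound and justifies restricting attention to $\lambda>\lambda_b$, which is precisely the range in which Theorem \ref{thm-increa-speed-introd} produces the sequence $\{\lambda_m\}$. The comparison itself would be carried out through the decoupling/Riccati apparatus already used to prove Theorem \ref{thm-increa-speed-introd}: a pointwise inequality between the time-dependent coefficients and their constant extremes transfers to a pointwise inequality for the corresponding Riccati solutions, and hence to an inequality on the blow-up times that count eigenvalues.

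The main obstacle I expect is the careful bookkeeping required to line up $\hat{\phantom{H}}$ versus $\check{\phantom{H}}$ (and $\underline{\phantom{H}}$ for the lower envelope of $H_{22}$) correctly across every occurrence of the coefficients in both the equation and the definition of $\lambda_b$, especially since the sign of $h_{22}$ reverses inequalities at several places. Once the two-sided asymptotic bound on $\lambda_m$ is established, the statistic-period conclusion is immediate by contraposition together with Proposition \ref{con-prop-lambd-estima}: if $\lambda$ exceeds the displayed upper bound for $\lambda_m$ then $\lambda$ sits strictly above the $m$th eigenvalue of the majorizing constant-coefficient system, so the associated eigenfunction cannot have statistic period $\le m$; the symmetric argument with the minorizing system handles the "less than $m$" case.
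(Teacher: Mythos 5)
Your overall strategy matches the paper's: sandwich the time-dependent problem between the two constant-coefficient auxiliary systems \eqref{h22-faster-longer-system} and \eqref{h22-slower-shorter-system} from the proof of Theorem \ref{increase-ratio-function-general}, obtain $\hat\lambda_m\le\lambda_m\le\check\lambda_m$ via Riccati comparison, and then apply Proposition \ref{con-prop-lambd-estima} to each constant-coefficient system. However, there is one concrete error and one unaddressed step.

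The error is that you identify the additive constant $\frac{\hat H_{22}-\underline H_{22}}{\check h_{22}}$ with $\lambda_b$. These are distinct quantities with opposite signs: by \eqref{lambda-b-h22-uniform-posit}, $\lambda_b=\min_t\{H_{22}-H_{33}H_{13}^2\}/\max_t h_{22}>0$ (both numerator and denominator negative), whereas $\hat H_{22}-\underline H_{22}>0$ (since $\underline H_{22}$ is chosen very negative) and $\check h_{22}<0$ make the additive constant \emph{negative}. In the paper the constant comes not from $\lambda_b$ but from the affine change of variable
$\frac{\hat H_{22}}{\underline H_{22}}\bigl(1-\mu\tfrac{\check h_{22}}{\hat H_{22}}\bigr)=1-\lambda$
that converts the minorizing problem \eqref{h22-slower-shorter-system}, which is posed in the $(1-\lambda)$ normalization of Theorem \ref{thm-const-increa-intro}, into the $\lambda h_{22}$ normalization of \eqref{h22-slower-shorter-system-unlh-hat}; this yields the identity $\hat\lambda_m=\frac{\hat H_{22}-\underline H_{22}}{\check h_{22}}+\frac{\underline H_{22}}{\check h_{22}}\underline\lambda_m$ of \eqref{rela-hatlambda-lambda}, and substituting the $\liminf$ bound $\underline\lambda_m\ge\pi^2m^2/(-2\hat H_{11}\underline H_{22}T^2)$ produces exactly the displayed lower bound. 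The threshold $\lambda_b$ plays no arithmetic role here; it only delimits the range on which Theorem \ref{general-lambda-exist} enumerates the eigenvalues.

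The unaddressed step is that the contraposition via Proposition \ref{con-prop-lambd-estima} presupposes that the eigenfunction attached to the $m$-th eigenvalue $\lambda_m$ of the time-dependent problem has statistic period exactly $m$. This is not automatic; the paper closes it by transplanting the argument of \cite[Section 6.1]{peng} through the auxiliary systems and also notes that the index $m$ used here differs from the index in Theorem \ref{general-lambda-exist} by the offset $n$ of \eqref{the-n-exist-t2npl1=0}, a bookkeeping point you need to track when aligning $\{\lambda_m\}$ with $\{\check\lambda_m\}$ and $\{\hat\lambda_m\}$.
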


\begin{rem}
The eigenvalue problem for the stochastic Hamiltonian system with time-dependent coefficients is much more complicated than the time-independent coefficients case. Because in the latter case,  by \cite{peng}, all the eigenvalues come from the blow-up of the associated Riccati equation and dual Riccati equation. However, for the time-dependent coefficients case,  the example in Section \ref{illustrating unusual example} shows  that some eigenvalues appear when the solution of the Riccati equation does not blow-up.
\end{rem}

%As \cite[Section 6.2]{peng} shows, in problems of eigenvalues of stochastic Hamiltonian systems  with boundary conditions, the eigenfunctions have the phenomenon of statistic periodicity and related stochastic oscillations. This phenomenon is a counterpart of the classical cases. As Remark \ref{parallel-increase-speed} denotes, the increasing order of eigenvalues of $1$-dimensional eigenvalue problem of stochastic Hamiltonian system  with boundary conditions considered in this paper  is  same as its counterpart in the classical deterministic case.

%We believe that the rather elementary result in Lemma \ref{const-term-posit-equation-posit} is not new, but we do not know   where it exactly originally comes from.

In this paper, $m$ is used to denote the second part of solution $(k,m)$ to the Riccati equation \eqref{riccati-lemma-n-sep-h22-k}  and the index of eigenvalues $\{\lambda_m\}_{m=1}^{+\infty}$.
$n$ is used to denote both the dimension of Hamiltonian system and the index in \eqref{the-n-exist-t2npl1=0}.
We believe that it will not cause  ambiguity.

Similar results of this paper  hold for the eigenvalue problem of stochastic Hamiltonian system driven by Poisson processes. In order to keep this paper in a suitable length, we postpone those results to another paper.

The paper is organized as follows.  In Section \ref{Problem of the first positive eigenvalue}, several lemmata are introduced which will be used repeatedly.
In Section \ref{section-formulation}, the main problem in this paper is formulated.
In Section \ref{section-existence-eigenvalue},  we prove the existence of all the eigenvalues located in $(\lambda_b, +\infty)$ and then all the eigenvalues in $\mathbb{R}$ under some sharper conditions in Section \ref{examp-find-all-eigen-sec-ass}. Moreover, the increasing order of these $\{\lambda_m\}_{m=1}^{+\infty}$ are studyed in Section \ref{section-increasing-order}.
Most importantly, apart from its theoretical value, as an interesting application, this result can be utilized to obtain an estimation about statistic period of solutions of FBSDEs, which is investigated in Section \ref{section-estimation-period}. In Section \ref{illustrating unusual example}, by a concrete example, we show how the eigenvalue problem of stochastic Hamiltonian system with time-dependent coefficients is far more subtler than its time-independent counterpart. At last, several examples, the proof of several lemmata,   the review of the viewpoint from functional analysis and Legendre transformation  are gathered in the Appendix.

%In Section \ref{review-section-functionalanalysis}, we review the eigenvalue problem of stochastic Hamiltonian system from the point of view of Functional Analysis which was given in S. Peng \cite{peng}.
%under the functional framework, for one kind of perturbation,

%%%%%%%%%%%%%%%%%%%%%%%%%%%%%%%%%%%%%%%%%%%%%%%%%%%%%%%%%%%%%%%%%%%%%%%%%%%%%%%%%%%%%%%%%%%%%%%%%%%%%%%%%%%%%%%%%%%%%%%%%%%%%%

%%%%%%%%%%%%%%%%%%%%%%%%%%%%%%%%%%%%%%%%%%%%%%%%%%%%%%%%%%%%%%%%%%%%%%%%%%%%%%%%%%%%%%%%%%%%%%%%%%%%%%%%%%%%%%%%%%%%%%%%%%%%%%
\section{Preliminaries }\label{Problem of the first positive eigenvalue}

\subsection{Comparison theorems}
Let $S_n$ denote the set of all $n\times n$ symmetric matrices, and $S_n^+$ the set of all nonnegative matrices in $S_n$.
For $K\in S_n^+$, $K\ge0$ means that $K$ is positive definite while $K>0$ strictly positive definite. %We will use the following two comparison theorems, which were proved in \cite[Appendix]{peng}.
Given two nonlinear $S_n$-value ODEs: for $i=1,2$,
\begin{equation} \label{equation-compare-riccati-ori}
\left\{
\begin{aligned}
&-\frac{\mathrm{d}K_i}{\mathrm{d}t}=K_i A(t)+A^\top(t)K_i+C^\top K_iC+R_i(t)+K_iN_i(t)K_i  \\
&\indent\indent\ \ \  +\left(B(t)+K_iD(t)\right)F_i(K_i)\left(B(t)+K_iD(t)\right)^\top, \indent t\le T,\\
&K_i(T) =Q_i,
\end{aligned}
\right.
\end{equation}
where $A,B,C,D\in C([0,T],\mathbb{R}^{n\times n})$, $R_i,N_i\in C([0,T],S_n)$, and $F_i:S_n\mapsto S_n$, $i=1,2$ are locally Lipschitz.
%: for $\forall K_1, K_2 \in S_n$ and any $c>0$, there is a $L_c>0$, such that whenever $\|K_1-K_2\|<c$, $\left\|F_i(K_1)-F_i(K_2)\right\|<L_c\|K_1-K_2\|$

\begin{lem}[\cite{peng}, Lemma 8.1]  \label{elementary-compare}
Denote by $K$  the solution to
\begin{equation}
\left\{
\begin{aligned}
&-\frac{\mathrm{d}K}{\mathrm{d}t}=A^\top(t)K+KA(t)+C^\top(t)KC(t)+R_1(t),\indent t\le T,  \\
&K(T)=Q_1.
\end{aligned}
\right.
\end{equation}
If $Q_1\in S_n^+$, and $R_1(t)\in S_n^+,\ t\le T$, then $K(t)\in S_n^+$.
Moreover, if $Q_1>0$, or $R_1(t)>0,\ t\le T$, then $K(t)>0,\ t< T$.
\end{lem}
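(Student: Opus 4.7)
The plan is to derive a stochastic representation for $K(t)$ by pairing the Lyapunov-type ODE with an auxiliary linear SDE, and then read off both sign statements directly from that formula. Fix $t\in[0,T]$ and $x\in\dbR^n$, and introduce the $\dbR^n$-valued process $\{X_s\}_{s\in[t,T]}$ solving
\begin{equation*}
\mathrm{d}X_s=A(s)X_s\,\mathrm{d}s+C(s)X_s\,\mathrm{d}B_s,\qquad X_t=x.
\end{equation*}
Applying It\^o's formula to $s\mapsto X_s^\top K(s)X_s$ and substituting the ODE for $K$, the contribution $X^\top(A^\top K+KA)X\,\mathrm{d}s$ arising from $\mathrm{d}(X^\top K X)$ exactly cancels the corresponding terms inside $-\dot K$, and the It\^o correction $X^\top C^\top KC\,X\,\mathrm{d}s$ cancels the $C^\top KC$ term inside $-\dot K$. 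What is left in the drift is $-X_s^\top R_1(s)X_s\,\mathrm{d}s$, so integrating from $t$ to $T$ and taking expectation yields the representation
\begin{equation*}
x^\top K(t)x=\mathbb{E}\bigl[X_T^\top Q_1 X_T\bigr]+\mathbb{E}\int_t^T X_s^\top R_1(s)X_s\,\mathrm{d}s,
\end{equation*}
from which both assertions of the lemma follow.

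If $Q_1\in S_n^+$ and $R_1(s)\in S_n^+$ for every $s\le T$, both terms on the right-hand side are almost-surely nonnegative for every realisation of $X$, hence $x^\top K(t)x\ge 0$ for all $x\in\dbR^n$ and all $t\le T$. This gives $K(t)\in S_n^+$.

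For the strict statement I would first establish a lower bound $\mathbb{E}|X_s|^2\ge|x|^2 e^{-c(s-t)}$, with $c$ depending only on $\sup_{[0,T]}(\|A\|+\|C\|^2)$; this follows from applying It\^o to $|X_s|^2$ together with a Gronwall argument. The representation then splits into two cases. If $Q_1>0$ in the paper's strict sense, then $\mathbb{E}[X_T^\top Q_1 X_T]\ge\lambda_{\min}(Q_1)\,|x|^2 e^{-c(T-t)}>0$ whenever $x\ne 0$. If instead $R_1(s)>0$ for every $s\le T$, compactness of $[0,T]$ gives $\min_{s\in[t,T]}\lambda_{\min}(R_1(s))>0$, so the integral term is strictly positive as soon as $t<T$. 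In either case $x^\top K(t)x>0$ for nonzero $x$ and $t<T$, i.e.\ $K(t)>0$ on $[0,T)$.

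The main obstacle is modest: once the representation is in hand, everything else is a routine cone-positivity argument, and the only step that requires genuine care is verifying that the It\^o correction from the martingale part of $X$ matches the $C^\top KC$ term in the ODE exactly, with no leftover drift. A purely deterministic alternative, inspecting a first time $t_0<T$ at which $K(t_0)$ picks up a null eigenvector $v$ and then contradicting the sign of the one-sided derivative of $v^\top K(t)v$ at $t_0$, is also workable but is noticeably more delicate because $v$ cannot be treated as $t$-independent; the stochastic route avoids that bookkeeping entirely, which is why I would use it.
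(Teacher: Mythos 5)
Your proof is correct. The paper does not give its own proof of Lemma \ref{elementary-compare} --- it is quoted verbatim as \cite[Lemma~8.1]{peng} with no argument supplied --- so there is nothing in-paper to compare against; the appropriate benchmark is the standard argument, which is exactly what you reproduce. Pairing the Lyapunov-type ODE with the linear SDE $\mathrm{d}X_s=A(s)X_s\,\mathrm{d}s+C(s)X_s\,\mathrm{d}B_s$ and applying It\^o to $X_s^\top K(s)X_s$ is the natural route precisely because the $C^\top K C$ term in the ODE is the It\^o correction coming from the martingale part of $X$; a deterministic fundamental-solution formula for $\dot\Phi=A\Phi$ would not see that term, so the stochastic flow is genuinely needed. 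Your bookkeeping is right: the drift collapses to $-X_s^\top R_1(s)X_s$, the stochastic integral is a true martingale since $K$, $C$ are continuous on the compact interval and the linear SDE has all moments, and this yields
\begin{equation*}
x^\top K(t)x=\mathbb{E}\bigl[X_T^\top Q_1 X_T\bigr]+\mathbb{E}\int_t^T X_s^\top R_1(s)X_s\,\mathrm{d}s,
\end{equation*}
from which the weak positivity is immediate. For the strict part, the Gronwall lower bound on $\mathbb{E}|X_s|^2$ is exactly the missing ingredient, and splitting into the $Q_1>0$ case (the boundary term is already positive) and the $R_1>0$ case (the integral is positive once $t<T$, using continuity of $R_1$ on the compact interval) matches the precise conclusion ``$K(t)>0$ for $t<T$'' in the lemma. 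One cosmetic remark: since $C^\top C\ge0$ it actually helps the second-moment estimate, so you may take $c=2\sup_{[0,T]}\|A\|$ in $\mathbb{E}|X_s|^2\ge |x|^2e^{-c(s-t)}$; including $\|C\|^2$ in $c$ only weakens the bound harmlessly. Your closing remark about the deterministic ``first degenerate time'' alternative and why the $t$-dependence of the null eigenvector makes it more delicate is a fair assessment.
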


\begin{lem}[\cite{peng}, Lemma 8.2]\label{comparison theorem}
Assume that $K_i,\ i=1,2$, are the solutions to (\ref{equation-compare-riccati-ori}) separately and
  $$Q_1\ge Q_2,\indent  R_1(t)\ge R_2(t),\indent  N_1(t)\ge N_2(t),\indent  \forall t\in[0,T];$$
  $$F_1(K)\ge F_1(K'),\indent \forall K\ge K'; \indent \ F_1(K)\ge F_2(K),\indent \forall K\in S_n.$$
Then
  $$K_1(t)\ge K_2(t).$$
\end{lem}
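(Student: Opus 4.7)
My plan is to set $\Delta := K_1 - K_2$, derive the backward ODE it satisfies by subtracting the two instances of \eqref{equation-compare-riccati-ori}, and then reduce the problem to Lemma \ref{elementary-compare}. The key is to rewrite the difference in the Lyapunov form
\[
-\frac{\mathrm{d}\Delta}{\mathrm{d}t} = \Delta \tilde A + \tilde A^\top \Delta + \tilde C^\top \Delta \tilde C + \tilde R,
\]
with continuous $\tilde A, \tilde C$ constructed from the data and a residual $\tilde R(t) \in S_n^+$; combined with $\Delta(T) = Q_1 - Q_2 \in S_n^+$, Lemma \ref{elementary-compare} will then deliver $\Delta(t) \in S_n^+$ for all $t \in [0,T]$.

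To achieve this form I would average two expansions of the $N$-quadratic difference: one around $K_2$ via $K_1 = K_2 + \Delta$, the other around $K_1$ via $K_2 = K_1 - \Delta$. The mean is
\[
K_1 N_1 K_1 - K_2 N_2 K_2 = \tfrac{1}{2} K_1 (N_1{-}N_2) K_1 + \tfrac{1}{2} K_2 (N_1{-}N_2) K_2 + \tfrac{1}{2}\Delta (N_1{-}N_2) \Delta + L_N,
\]
with $L_N$ linear in $\Delta$. Since $N_1 - N_2 \ge 0$ admits a Cholesky decomposition $N_1-N_2 = S^\top S$ and $K_1, K_2, \Delta$ are symmetric, each of the three non-linear blocks is manifestly in $S_n^+$, while $L_N$ is absorbed into $\tilde A := A + \tfrac12 (N_1 K_2 + N_2 K_1) + \cdots$. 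A parallel expansion of the $F$-terms $(B + K_i D) F_i(K_i)(B + K_i D)^\top$ feeds further linear-in-$\Delta$ contributions into $\tilde A$ and $\tilde C$ (the latter built from $C$ and $D$) and leaves two residuals: one of the form $(B + K_2 D)[F_1(K_2) - F_2(K_2)](B + K_2 D)^\top$, which is in $S_n^+$ by $F_1 \ge F_2$; and one involving $F_1(K_1) - F_1(K_2)$ (together with a quadratic tail of type $\Delta D[F_1(K_1)-F_2(K_2)]D^\top\Delta$), which is in $S_n^+$ only once one already knows $K_1 \ge K_2$.

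This circular dependence on $F_1$-monotonicity is the main obstacle, and I would dissolve it by a maximal-interval bootstrap. Define
\[
t^* := \inf\{\, t \in [0,T] : \Delta(s) \in S_n^+ \text{ for every } s \in [t,T] \,\},
\]
which is well-defined and strictly less than $T$ by the terminal condition and continuity. On $[t^*, T]$ the problematic residual is genuinely in $S_n^+$, so the Lyapunov representation above holds with $\tilde R \in S_n^+$, and Lemma \ref{elementary-compare} returns $\Delta \ge 0$ on the whole interval. If $t^* > 0$, the local Lipschitz continuity of the Riccati vector field together with the backward $S_n^+$-preservation of the linear Lyapunov flow forces $\Delta$ to remain in $S_n^+$ on a small interval to the left of $t^*$, contradicting its minimality. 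Hence $t^* = 0$ and $K_1(t) \ge K_2(t)$ throughout $[0,T]$. The subtle point is justifying this backward propagation across $t^*$ in the present quasi-Lyapunov nonlinear setting; I view this as the central technical hurdle, with the rest being algebraic bookkeeping.
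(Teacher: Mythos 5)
You correctly identify the overall strategy---subtract the two Riccati equations, write $\Delta := K_1 - K_2$ in the Lyapunov form $-\Delta' = \Delta\tilde A + \tilde A^\top\Delta + C^\top\Delta C + \tilde R$ and invoke Lemma \ref{elementary-compare}---and you also correctly isolate the obstacle: the residual block involving $F_1(K_1) - F_1(K_2)$ lies in $S_n^+$ only once $K_1\ge K_2$ is already known. The averaged-expansion algebra is workable (the averaged quadratic piece does come out as $+\tfrac12\Delta(N_1{-}N_2)\Delta\ge 0$), though it is cleaner to expand one-sidedly, e.g.\ $K_1N_1K_1 - K_2N_2K_2 = K_1(N_1{-}N_2)K_1 + \tfrac12(K_1{+}K_2)N_2\Delta + \tfrac12\Delta N_2(K_1{+}K_2)$ and, with $E_i = B + K_iD$,
$E_1F_1(K_1)E_1^\top - E_2F_2(K_2)E_2^\top = E_1[F_1(K_1){-}F_2(K_2)]E_1^\top + \tfrac12(E_1{+}E_2)F_2(K_2)D^\top\Delta + \tfrac12\Delta D F_2(K_2)(E_1{+}E_2)^\top$,
which leaves no quadratic tail at all and shows that $\tilde C = C$ unmodified (the $D$-contributions feed $\tilde A$, not $\tilde C$).

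The genuine gap is the maximal-interval bootstrap, which you flag but do not close. Two things go wrong. First, when $Q_1 = Q_2$ (allowed by the hypotheses) one has $\Delta(T)=0$ on the boundary of $S_n^+$; continuity alone does not give $t^*<T$, so the bootstrap may not even start. Second, and more seriously, the claimed ``backward propagation across $t^*$'' is precisely what is in question: for $t$ just below $t^*$ you do not yet know $K_1(t)\ge K_2(t)$, hence $\tilde R(t)$ is not known to lie in $S_n^+$, and Lemma \ref{elementary-compare} cannot be applied on $[t,T]$. Local Lipschitzness of the Riccati field has nothing to do with cone-preservation; that preservation is a \emph{consequence} of the nonnegativity of $\tilde R$, not an independent principle you can call on to extend leftward. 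The standard device to close the loop is a perturbation: replace $(Q_1,R_1)$ by $(Q_1+\epsilon I_n,\,R_1+\epsilon I_n)$, so $\Delta^\epsilon(T)>0$ strictly and the set $\{t:\Delta^\epsilon(s)>0,\ \forall s\in[t,T]\}$ is a genuinely open, nonempty neighbourhood of $T$; on that set the residual is strictly positive, and applying Lemma \ref{elementary-compare} up to its infimum yields a strict inequality there, contradicting minimality unless the infimum is $0$; then let $\epsilon\to 0$. An equally standard alternative is a monotone successive-approximation scheme that freezes the argument of $F_1$ at the previous iterate, breaking the circularity by induction. One of these devices is what your sketch is missing.
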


\begin{lem} \label{const-term-posit-equation-posit}
Assume that $\psi_1, \psi_2, \psi_3 \in C([0,T],\mathbb{R})$, and $\psi_1\ge c>0$ $(\psi_1\le -c<0, resp.)$.
Denote by $\Phi$ the solution to the following equation:
\begin{equation*}
\left\{
\begin{aligned}
&-\frac{\mathrm{d}\Phi}{\mathrm{d}t}=\psi_1+\psi_2\Phi+\psi_3\Phi^2,\indent t\le T, \\
&\Phi(T)=0.
\end{aligned}
\right.
\end{equation*}
Then $\Phi(t)>0,\ t<T$ $(\Phi(t)<0,\ t<T, resp.)$.
\end{lem}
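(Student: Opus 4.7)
The plan is to argue by contradiction using only the sign information of the ODE at a zero of $\Phi$. Assume $\psi_1\ge c>0$ throughout; the other case is completely symmetric.

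First I would check the behavior at the terminal time: since $\Phi(T)=0$, evaluating the ODE at $t=T$ gives
\begin{equation*}
-\Phi'(T)=\psi_1(T)+\psi_2(T)\cdot 0+\psi_3(T)\cdot 0=\psi_1(T)\ge c>0,
\end{equation*}
so $\Phi'(T)<0$. Consequently $\Phi$ is strictly decreasing at $T$ in forward time, equivalently strictly increasing backwards, which yields $\Phi(t)>0$ on some left neighborhood $(T-\delta,T)$.

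Next I would extend this local positivity to the entire interval $[0,T)$ by contradiction. Suppose the set $E=\{t\in[0,T):\Phi(t)\le 0\}$ is nonempty, and let $t^{*}=\sup E$. By the local result above $t^{*}<T$, and by continuity of $\Phi$ together with $\Phi>0$ on $(t^{*},T)$ we must have $\Phi(t^{*})=0$. Now I would evaluate the ODE exactly at $t^{*}$: since both $\Phi$ and $\Phi^{2}$ vanish there, the linear and quadratic terms drop out, leaving
\begin{equation*}
-\Phi'(t^{*})=\psi_1(t^{*})\ge c>0,
\end{equation*}
so $\Phi'(t^{*})<0$. Hence $\Phi$ is strictly decreasing at $t^{*}$, which forces $\Phi(t)<\Phi(t^{*})=0$ for $t$ slightly larger than $t^{*}$, directly contradicting $\Phi>0$ on $(t^{*},T)$. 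Therefore $E=\emptyset$, i.e.\ $\Phi(t)>0$ on $[0,T)$.

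I expect no substantial obstacle: the argument rests entirely on the key algebraic fact that at any zero of $\Phi$ the right-hand side of the Riccati equation reduces to $\psi_1$, whose sign is prescribed. The only thing to be careful about is that the argument assumes the solution $\Phi$ exists and is continuous on $[0,T]$, which should be taken as part of the hypothesis (the equation being Riccati, blow-up can only occur backwards from $T$, and we are asserting positivity wherever $\Phi$ is defined on $[0,T)$). The companion statement $\Phi(t)<0$ when $\psi_1\le -c<0$ follows by replacing $\Phi$ with $-\Phi$ and adjusting the coefficients accordingly, or simply repeating the same supremum argument at the first zero encountered backwards from $T$.
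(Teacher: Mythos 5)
Your argument is correct. The paper states Lemma \ref{const-term-posit-equation-posit} without proof, so there is no explicit proof to compare against; the authors appear to regard it as standard. Your elementary route works: at the terminal time and at any hypothetical first backward zero $t^{*}$, the linear and quadratic terms vanish and the ODE collapses to $-\Phi'(t^{*})=\psi_1(t^{*})\ge c>0$, which makes $\Phi$ strictly decreasing there and rules out $t^{*}$ being approached from the right by positive values. The reduction of the second case to the first via $\Phi\mapsto-\Phi$ is also valid, since $-\Phi$ solves the same type of equation with continuous coefficients $(-\psi_1,\psi_2,-\psi_3)$ and $-\psi_1\ge c>0$. One minor remark: you restrict the supremum to $[0,T)$, so strictly speaking the argument is given on that fixed interval; if the Riccati solution blows up at some $t_b\in(0,T)$, the same argument on $(t_b,T)$ gives the conclusion on the maximal existence interval, which is how the lemma is used. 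For comparison, the proof the authors most likely intended is a one-line application of Lemma \ref{elementary-compare}: rewrite the Riccati equation as the linear equation $-\Phi'=\psi_1+\bigl(\psi_2+\psi_3\Phi\bigr)\Phi$, take $A(t)=\tfrac12\bigl(\psi_2(t)+\psi_3(t)\Phi(t)\bigr)$ (with the known solution plugged in), $C=0$, $R_1=\psi_1>0$, $Q_1=0$, and conclude $\Phi(t)>0$ for $t<T$ directly from that lemma. That linearization trick is shorter and keeps the chain of lemmata tight; your direct sign argument is more self-contained and does not rely on Lemma \ref{elementary-compare}. Both are correct.
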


\subsection{Decoupling method for linear FBSDEs}
By the method introduced in \cite{peng}, every linear FBSDE:
\begin{equation} \label{stochastic-Hamilton}
\left\{
\begin{aligned}
&\mathrm{d}x_t=[H_{21}x_t+ H_{22}y_t+ H_{23}z_t]\mathrm{d}t + [H_{31}x_t+ H_{32}y_t+ H_{33}z_t]\mathrm{d}B_t,  \indent t\in[T_1,T_2],\\
&-\mathrm{d}y_t=[H_{11}x_t+H_{12}y_t + H_{13}z_t]\mathrm{d}t-z_t \mathrm{d}B_t, \indent t\in[T_1,T_2],  \\
&x(T_1)=x_0,\indent y(T_2)=K_{T_2}x(T_2),
\end{aligned}
\right.
\end{equation}
corresponds to a Riccati type ODE:
\begin{numcases}{}
-\frac{\mathrm{d}K}{\mathrm{d}t}=K( H_{21}+ H_{22}K+ H_{23}M) + H_{11}+  H_{12}K+ H_{13}M, \indent t\in[T_1,T_2], \label{riccati-lemma-n-sep-1}\\
M=K( H_{31}+H_{32}K+H_{33}M),\indent t\in[T_1,T_2],  \label{riccati-lemma-n-sep-2}   \\
K(T_2) =K_{T_2}\in S_n,  \label{riccati-lemma-n-sep-3}
\end{numcases}
where $(K,M)\in C^1([T_1, T_2];S_n^+)\times L^{\infty}([T_1, T_2];\mathbb{R}^{n\times n}), [T_1, T_2]\subset [0, T]$.

Riccati equation (\ref{riccati-lemma-n-sep-1})-(\ref{riccati-lemma-n-sep-3}) is introduced in a fantastic manner: it transfers the fully-coupled FBSDEs (\ref{stochastic-Hamilton}) into decoupled one. Lemma \ref{decouple-lemma} depicts the detail.

The following lemma is a generalization of \cite[Lemma 4.2]{peng}, from constant coefficients case to time-dependent coefficients case. However, since the proof is standard, we put it in appendix.

\begin{lem} \label{decouple-lemma}
Assume that on some interval $[T_1,T_2]\subseteq (-\infty,T]$, Riccati equation (\ref{riccati-lemma-n-sep-1})-(\ref{riccati-lemma-n-sep-3})
has a solution $(K,M)$.
Then stochastic Hamiltonian system with boundary conditions (\ref{stochastic-Hamilton}) has an \emph{explicit solution}:
$$ (x(t),y(t),z(t))=\left(x(t),K(t)x(t),M(t)x(t)\right),\indent t\in[T_1,T_2],  $$
where $x(t)$ is solved by
\begin{equation} \label{forward-sde-decouple-lemma}
\left\{
\begin{aligned}
&\mathrm{d}x_t=[H_{21}+ H_{22}K+ H_{23}M]x_t\mathrm{d}t   \\
&\indent\ \ \  + [ H_{31}+ H_{32}K+ H_{33}M]x_t\mathrm{d}B_t,   \indent    t\in [T_1,T_2],   \\
&x(T_1)=x_0.
\end{aligned}
\right.
\end{equation}
Further, for $t\in [T_1,T_2]$, if $\det(I_n-K(t)H_{33}(t))\neq0$, or more weakly, there is a constant $c>0$, such that
\begin{equation}\label{weak-unique-condition}
  (I_n-K(t)H_{33}(t))^\top(I_n-K(t)H_{33}(t))\ge c (H_{13}(t)+K(t)H_{23}(t))^\top(H_{13}(t)+K(t)H_{23}(t)),
\end{equation}
then the solution to (\ref{stochastic-Hamilton}) is unique.
\end{lem}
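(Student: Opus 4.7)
The plan is to verify existence by direct substitution of the ansatz $(x_t, K(t) x_t, M(t) x_t)$ into (\ref{stochastic-Hamilton}), and to reduce uniqueness to a standard linear BSDE via the substitution $\bar y := \Delta y - K\Delta x$, $\bar z := \Delta z - M\Delta x$. For existence, I would let $x_t$ solve the forward SDE (\ref{forward-sde-decouple-lemma}) and set $y_t := K(t) x_t$, $z_t := M(t) x_t$. Applying It\^o's formula to $K(t) x_t$ produces a drift of the form $K' x + K[H_{21} + H_{22}K + H_{23}M] x$; the Riccati equation (\ref{riccati-lemma-n-sep-1}) makes this collapse to $-(H_{11} + H_{12}K + H_{13}M) x$, which is exactly $-(H_{11} x + H_{12} y + H_{13} z)$. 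The diffusion coefficient $K[H_{31} + H_{32}K + H_{33}M] x$ collapses to $M x = z$ by the algebraic relation (\ref{riccati-lemma-n-sep-2}). The boundary values $x(T_1) = x_0$ and $y(T_2) = K(T_2) x(T_2) = K_{T_2} x(T_2)$ follow respectively from the forward initial condition and the Riccati terminal condition (\ref{riccati-lemma-n-sep-3}).

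For uniqueness, let $(\Delta x, \Delta y, \Delta z)$ denote the difference of two solutions; it satisfies (\ref{stochastic-Hamilton}) with $\Delta x(T_1) = 0$ and $\Delta y(T_2) = K_{T_2} \Delta x(T_2)$. A direct It\^o computation on $\bar y := \Delta y - K \Delta x$, combined with (\ref{riccati-lemma-n-sep-1})--(\ref{riccati-lemma-n-sep-2}) and the substitution $\bar z := \Delta z - M \Delta x$, produces the linear BSDE
\begin{equation*}
d\bar y_t = -\bigl[(H_{12} + K H_{22}) \bar y_t + (H_{13} + K H_{23}) \bar z_t\bigr]\, dt + \bigl[(I_n - K H_{33}) \bar z_t - K H_{32} \bar y_t\bigr]\, dB_t,
\end{equation*}
with $\bar y(T_2) = 0$. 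When $\det(I_n - K H_{33}) \neq 0$, I would introduce $\zeta := (I_n - K H_{33}) \bar z - K H_{32} \bar y$, invert to express $\bar z$ in terms of $(\bar y, \zeta)$, obtain a standard linear BSDE in $(\bar y, \zeta)$ with zero data, and conclude $\bar y \equiv 0$, $\zeta \equiv 0$, hence $\bar z \equiv 0$. Then $\Delta y = K \Delta x$ and $\Delta z = M \Delta x$, so $\Delta x$ solves a homogeneous linear SDE with zero initial value and vanishes. Under the weaker assumption (\ref{weak-unique-condition}), I would instead apply It\^o's formula to $|\bar y|^2$ and take expectation; condition (\ref{weak-unique-condition}) allows the cross term $2 \bar y^\top (H_{13} + K H_{23}) \bar z$ in the drift to be bounded by a small constant times the quadratic variation $|(I_n - K H_{33}) \bar z - K H_{32} \bar y|^2$ plus a multiple of $|\bar y|^2$. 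A backward Gronwall estimate from $T_2$ then gives $\bar y \equiv 0$ and $(I_n - K H_{33}) \bar z \equiv 0$, which combined with the BSDE equation forces $\bar z \equiv 0$; the forward SDE closes as before.

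The principal obstacle is the weak-uniqueness case, since $I_n - K H_{33}$ may be singular and one cannot algebraically solve for $\bar z$. The role of (\ref{weak-unique-condition}) is precisely to force the vector $(H_{13} + K H_{23}) \bar z$ to be controlled by the diffusion $(I_n - K H_{33}) \bar z$, which is what makes the energy inequality absorbable. The other nontrivial piece of bookkeeping is tracking the cancellation of all $\Delta x$ coefficients in the drift and diffusion of $d(\Delta y - K \Delta x)$: one must use (\ref{riccati-lemma-n-sep-1}) to kill the drift $\Delta x$-term and (\ref{riccati-lemma-n-sep-2}) together with the definition of $\bar z$ to kill the diffusion $\Delta x$-term, after which the BSDE for $(\bar y, \bar z)$ is truly homogeneous.
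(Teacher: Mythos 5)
Your proposal is correct and follows essentially the same route as the paper: the same It\^o computation produces the same homogeneous linear BSDE for $(\bar y,\bar z)$, the same change of variable $\zeta=(I_n-KH_{33})\bar z-KH_{32}\bar y$ handles the invertible case, and the same energy estimate on $|\bar y|^2$ with condition (\ref{weak-unique-condition}) absorbing the $\bar z$-cross term handles the degenerate case. The only cosmetic difference is that you work with the difference $(\Delta x,\Delta y,\Delta z)$ of two solutions, whereas the paper defines $\hat y=Kx-y$, $\hat z=Mx-z$ for a single solution and then invokes uniqueness of the forward SDE; the resulting BSDE and estimates are identical.
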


\subsection{Investigation of the coefficients of the derived Riccati equations}
Since (\ref{riccati-lemma-n-sep-2}) can be rewritten as $[I_n-K(t)H_{33}(t)]M(t)=K(t)(H_{31}(t)+H_{32}(t)K(t))$, condition $\det\left(I_n-K(t)H_{33}(t)\right)\neq0,\ \forall t\in [T_1, T_2]$ is necessary for the unique existence of solution $(K,M)$ to (\ref{riccati-lemma-n-sep-1})-(\ref{riccati-lemma-n-sep-3}).

\begin{lem}\label{analysis-F0K}
Let $\beta_1>\beta>0$ such that $-\beta_1 I_n \le H_{33}(t)\le-\beta I_n<0$, $t\in[0,T]$. Then
\begin{enumerate}
  \item For $K\in S_n$ and $K>-\frac{1}{2\beta_1}I_n$, there is a constant $c>0$, such that
        $$\left\|\left(I_n-KH_{33}(t)\right)^{-1}\right\|\le c, \ \ \ \ \forall t\in [0, T].$$
  \item For $K>0$, we have $(F_0(K))^\top=F_0(K)$ and
    \begin{eqnarray} \label{F0-K-bound}
       0\le F_0(K)=(I_n-KH_{33}(t))^{-1}K \le -H_{33}^{-1}(t)\le \frac{1}{\beta}I_n,\ \ \ \ \forall  t\in [0, T].
    \end{eqnarray}
  \item For $K_1,K_2\in S^+_n,\ K_1\ge K_2$, we have $F_0(K_1)\ge F_0(K_2)$, $\forall t\in [0, T]$.
\end{enumerate}
\end{lem}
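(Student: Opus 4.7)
The plan is to set $A(t) := -H_{33}(t)$, a symmetric matrix satisfying $\beta I_n \le A(t) \le \beta_1 I_n$, and to rewrite $I_n - K H_{33}(t) = I_n + K A(t)$ throughout. All three items then follow from clean algebraic manipulations built out of the square roots $A^{1/2}$ and (where relevant) $K^{1/2}$. For item (1), I would exploit the similarity
\[
A^{1/2}(I_n + KA) A^{-1/2} = I_n + A^{1/2} K A^{1/2}.
\]
Since $K > -\tfrac{1}{2\beta_1} I_n$ and $A(t) \le \beta_1 I_n$, the right-hand side satisfies $I_n + A^{1/2} K A^{1/2} \ge I_n - \tfrac{1}{2\beta_1} A(t) \ge \tfrac{1}{2} I_n$, hence is uniformly invertible with operator-norm bound at most $2$. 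Transferring back through $\|A^{\pm 1/2}\| \le \max\{\sqrt{\beta_1},\,1/\sqrt{\beta}\}$ delivers a uniform-in-$t$ bound on $\|(I_n - K H_{33}(t))^{-1}\|$.

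For item (2), symmetry follows from the identity
\[
(I_n - K H_{33}) K \;=\; K - K H_{33} K \;=\; K(I_n - H_{33} K),
\]
which gives $F_0(K) = (I_n - K H_{33})^{-1} K = K(I_n - H_{33} K)^{-1}$; transposing the second expression (using $K, H_{33} \in S_n$) returns the first. An analogous factorization yields
\[
F_0(K) \;=\; K^{1/2}\bigl(I_n + K^{1/2} A(t) K^{1/2}\bigr)^{-1} K^{1/2},
\]
which is manifestly in $S_n^+$ for $K > 0$ and, more usefully, can be inverted to give $F_0(K)^{-1} = K^{-1} + A(t) = K^{-1} - H_{33}(t)$. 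Since $K > 0$ implies $K^{-1} > 0$, we deduce $F_0(K)^{-1} \ge -H_{33}(t)$, hence $F_0(K) \le -H_{33}^{-1}(t)$; the final estimate $-H_{33}^{-1}(t) \le \tfrac{1}{\beta} I_n$ is immediate from $-H_{33}(t) \ge \beta I_n$.

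Item (3) is then a one-line consequence: for $K_1 \ge K_2 > 0$, the identity $F_0(K_i)^{-1} = K_i^{-1} - H_{33}(t)$ combined with operator-monotonicity of matrix inversion on positive-definite matrices gives $F_0(K_1)^{-1} \le F_0(K_2)^{-1}$, whence $F_0(K_1) \ge F_0(K_2)$; the general case $K_1 \ge K_2 \ge 0$ follows by applying this to $K_i + \epsilon I_n$ and letting $\epsilon \downarrow 0$, using continuity of $F_0$ afforded by item (1). The only place requiring genuine attention is verifying the factorization $F_0(K) = K^{1/2}(I_n + K^{1/2} A K^{1/2})^{-1} K^{1/2}$, which follows from $(I_n + KA) K^{1/2} = K^{1/2}(I_n + K^{1/2} A K^{1/2})$ by inverting on both sides; once this is in hand, the single identity $F_0(K)^{-1} = K^{-1} - H_{33}(t)$ simultaneously delivers symmetry, positivity, the upper bound, and monotonicity, so I do not expect any real obstacle beyond this bookkeeping.
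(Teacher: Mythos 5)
Your proposal is correct and follows essentially the same route as the paper: both reduce item (2) to the key identity $F_0(K)=(K^{-1}-H_{33}(t))^{-1}$ (you arrive at it through the symmetric factorization $K^{1/2}(I_n+K^{1/2}AK^{1/2})^{-1}K^{1/2}$, the paper directly from $(I_n-KH_{33})^{-1}K$), and item (3) is handled identically via operator monotonicity of inversion plus the $\epsilon$-regularization. For item (1) the paper factors $(I_n-KH_{33})^{-1}=-H_{33}^{-1}(-H_{33}^{-1}+K)^{-1}$ rather than conjugating by $A^{\pm 1/2}$, but the mechanism -- showing $-H_{33}^{-1}+K$ (equivalently $I_n+A^{1/2}KA^{1/2}$) has a uniform lower bound $\tfrac{1}{2\beta_1}I_n$ (resp.\ $\tfrac12 I_n$) -- is the same, so this is a cosmetic variation.
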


\begin{proof}
1.  Since $$-\beta_1 I_n \le H_{33}(t)\le-\beta I_n<0, \indent t\in[0,T],$$ $H_{33}(t)$ is invertible and $$0<\frac{1}{\beta_1}I_n\le -H_{33}^{-1}(t)\le\frac{1}{\beta}I_n.$$
It follows that
\begin{eqnarray}\label{equ1}
\left(I_n-KH_{33}(t)\right)^{-1}=-H_{33}^{-1}(t)\left(-H_{33}^{-1}(t)+K\right)^{-1}.
\end{eqnarray}
Noting that $K>-\frac{1}{2\beta_1}I_n$,
then $0\le\left(-H_{33}^{-1}(t)+K\right)^{-1}\le2\beta_1 I_n$.
Then for $\forall t \in[0,T],$
$$\left\|(I_n-KH_{33}(t))^{-1}\right\|\leq \left\|-H_{33}^{-1}(t)\right\|\left\|\left(-H_{33}^{-1}(t)+K\right)^{-1}\right\|\leq \frac{2\beta_1}{\beta}.$$

2. For $K>0$, $$F_0(K)=\left(I_n-KH_{33}(t)\right)^{-1}K=\left(K^{-1}-H_{33}(t)\right)^{-1}.$$
Then $(F_0(K))^\top=F_0(K)$.
Besides, by  $K^{-1}\ge0$ and $-H_{33}(t)\ge \beta I_n>0,t\in[0,T]$,
we obtain
$$0\le F_0(K)=\left(K^{-1}-H_{33}(t)\right)^{-1}\le \frac{1}{\beta}I_n.$$

%We have already proved in \emph{1.} that $(I_n-KH_{33}(t))^{-1}$ is uniformly bounded for$K>-\frac{1}{2\beta_1}I_n$ and $\forall t\in [0, T]$.Then  , and, uniformly in $t$, $F_0(K)=(I_n-KH_{33}(t))^{-1}K$ is  continuous in $K=0$:
%$\lim_{K^\epsilon\to K}F_0(K^\epsilon)=F_0(K), K^\epsilon\in S_n^+$.
%In particular,$$\lim_{K\rightarrow 0}F_0(K)=0.$$T and $0\le F_0(K)\le \frac{1}{\beta}I_n$ for $K\in S_n^+$, $\forall t\in [0, T]$.

3. At first, assume that $K_1\ge K_2>0$, then $K_1^{-1}\leq K_2^{-1}$, and hence
$$0<K_1^{-1}-H_{33}(t)\leq K_2^{-1}-H_{33}(t).$$
It follows that
\begin{equation}\label{equ2}
F_0(K_1)=\left(K_1^{-1}-H_{33}(t)\right)^{-1}\ge \left(K_2^{-1}-H_{33}(t)\right)^{-1}=F_0(K_2).
\end{equation}
For general case, $K_1, K_2\in S_n^+$ and $K_1\ge K_2$, by (\ref{equ1}), $I_n-K_iH_{33}$ are invertible, and hence $F_0(K_i)$ can be defined. By \eqref{equ2},
$$F_0(K_1+\epsilon)=\left((K_1+\epsilon)^{-1}-H_{33}(t)\right)^{-1}\ge \left((K_2+\epsilon)^{-1}-H_{33}(t)\right)^{-1}=F_0(K_2+\epsilon).$$
It follows that
$$F_0(K_1)=\lim\limits_{\epsilon\to 0^+}F_0(K_1+\epsilon)\ge \lim\limits_{\epsilon\to 0^+}F_0(K_2+\epsilon)=F_0(K_2).$$
Besides, $F_0(0)=0$, then $F_0(K)$ is a mapping from $S_n^+$ to $S_n^+$,
\end{proof}

\subsection{Monotonicity condition}
For linear FBSDEs with constant coefficients \eqref{evp2} without perturbation, i.e., $\bar{H}=0$,  the classical monotonicity condition is as follows:
\begin{eqnarray}
\begin {bmatrix}
-H_{11}&-H_{12}&-H_{13}\\
H_{21}&H_{22}&H_{23}\\
H_{31}&H_{32}&H_{33}
\end{bmatrix}
\leqslant-\beta I_{3n},\end{eqnarray}
where $\beta>0$ is a constant. That is, for $(x,y,z)\in \mathbb R^n\times\mathbb R^n\times\mathbb R^n$,
\begin{eqnarray}\label{moncondition}
\begin{bmatrix}
x^\top&y^\top&z^\top
\end{bmatrix}
 \begin {bmatrix}
-H_{11}&-H_{12}&-H_{13}\\
H_{21}&H_{22}&H_{23}\\
H_{31}&H_{32}&H_{33}
\end{bmatrix}
\begin {bmatrix}
x\\y\\z
\end{bmatrix}
\leq -\beta\left(\|x\|^2+\|y\|^2+\|z\|^2\right).
\end{eqnarray}
By taking $(x,y,z)=(x,0,0), (0,y,0), (0,0,z)$  in \eqref{moncondition},
\begin{eqnarray}  \label{moncond-H11H22H33}
H_{11}\ge\beta I_n, \indent  H_{22}\le-\beta I_n, \indent H_{33}\le-\beta I_n.
\end{eqnarray}
Besides,
\begin{eqnarray}  \label{moncond-H22H23H33-1H32}
H_{22}-H_{23}H_{33}^{-1}H_{32}<0.
\end{eqnarray}
\emph{In this paper, we always assume that the monotonicity condition \eqref{moncondition} holds true.}

\section{Formulation}\label{section-formulation}
% and proof of Theorem \ref{thm-increa-speed-introd}
In this section, we formulate the eigenvalue problem:
\begin{equation}\label{generel-function-eigen-problem}
\left\{
\begin{aligned}
&\mathrm{d}x_t=\left[H_{21}x_t+(H_{22}-\lambda h_{22})y_t+H_{23}z_t\right]\mathrm{d}t   +\left[H_{31}x_t+H_{32}y_t+H_{33}z_t\right]\mathrm{d}B_t, \ \    t\in[0,T],       \\
&-\mathrm{d}y_t=\left[H_{11}x_t+H_{12}y_t
  +H_{13}z_t\right]\mathrm{d}t-z_t \mathrm{d}B_t,    \indent    t\in[0,T],            \\
&x(0)=0,\indent  y(T) =0.
\end{aligned}
\right.
\end{equation}

\begin{ass} \label{assumption-1d-general-perturbation}
Assume that $n=1$ and $H_{ij}\in C([0,T],\mathbb{R})$, $i,j=1,2,3$. Besides, $H$ satisfy (\ref{moncondition}) uniformly for $t\in[0,T]$.
Moreover,
\begin{eqnarray} \label{h23t=-h33th13t}
H_{23}(t)=-H_{33}(t)H_{13}(t),\indent   t\in[0,T].
\end{eqnarray}
Besides, $h_{22}\in C([0,T],\mathbb{R})$ and $h_{22}(t)<0, t\in[0,T]$.
\end{ass}

By (\ref{moncondition}), $H_{22}(t)-H_{23}^2(t)H_{33}^{-1}(t)<0,\ t\in[0,T]$.
Then by \eqref{h23t=-h33th13t},
\begin{eqnarray}\label{H22-H33H132-le-0-H22}
H_{22}(t)-H_{33}(t)H_{13}^2(t)<0,\indent   t\in[0,T].
\end{eqnarray}
\begin{rem}\label{lambda-h22-positive}
Let
\begin{eqnarray}
\ \ \bar{H}=\begin {bmatrix}
0&0&0\\
0&h_{22}&0\\
0&0&0
\end{bmatrix}.
\end{eqnarray}
Then $\bar{H}\le0_{3\times3}$.
By Remark \ref{positive real number}, all the eigenvalues of problem \eqref{generel-function-eigen-problem} are located in $\mathbb{R}^+=[0,+\infty)$.

Such a fact can also be deduced from the following viewpoint.
From (\ref{moncondition}), we have $H_{22}(t)-H_{33}(t)H_{13}^2(t)<0, t\in[0,T]$.
Then, when $\lambda h_{22}\ge0$, there is no finite blow-up time for solution $k(\cdot,\lambda)$ to (\ref{general-riccati-function-case}). By Lemma \ref{decouple-lemma}, there is none negative eigenvalue for the eigenvalue problem (\ref{generel-function-eigen-problem}).
\end{rem}

In what follows, $H_{ij}$ ($h_{22}, resp.$) can be seen as continuous functions on $(-\infty,T]$, with $H_{ij}(t)=H_{ij}(0), (h_{22}(t)=h_{22}(0), resp.), t\in(-\infty,0]$.

As in (\ref{riccati-lemma-n-sep-1})-(\ref{riccati-lemma-n-sep-3}), corresponding to (\ref{generel-function-eigen-problem}), we introduce the following Riccati equation:
\begin{equation} \label{riccati-lemma-n-sep-h22-k}
\left\{
\begin{aligned}
&-\frac{\mathrm{d}k}{\mathrm{d}t}=k\left( H_{21}+ (H_{22}-\lambda h_{22})k+ H_{23}m\right) + H_{11}+  H_{12}k+ H_{13}m, \ \ \ t\in[T_1,T_2], \\
&m=k\left( H_{31}+H_{32}k+H_{33}m\right),\indent  t\in[T_1,T_2],    \\
&k(T_2)=k_{T_2}\in S_n,
\end{aligned}
\right.
\end{equation}
and a forward SDE similar to (\ref{forward-sde-decouple-lemma}):
\begin{equation}\label{forward-sde-decouple-lemma-h22-k}
\left\{
\begin{aligned}
&\mathrm{d}x_t=[H_{21}+ \left(H_{22}-\lambda h_{22})k+ H_{23}m\right]x_t\mathrm{d}t  \\
&\indent\ \ \ + \left[ H_{31}+ H_{32}k+ H_{33}m\right]x_t\mathrm{d}B_t,    \indent   t\in [T_1,T_2],   \\
&x(T_1)=x_0.
\end{aligned}
\right.
\end{equation}

To investigate eigenvalues of (\ref{generel-function-eigen-problem}), we borrow the method in \cite{peng} to introduce a dual Hamiltonian system (about which we give a concise introduction in Appendix \ref{intro-legendre-transfor-sec}) of (\ref{generel-function-eigen-problem}):
\begin{equation} \label{dual-Hamilton-1d-h22}
\left\{
\begin{aligned}
&\mathrm{d}\tilde{x}_t=\left[\widetilde{H}_{21}\tilde{x}_t+ \widetilde{H}_{22}\tilde{y}_t+ \widetilde{H}_{23}\tilde{z}_t\right]\mathrm{d}t    + \left[\widetilde{H}_{31}\tilde{x}_t+ \widetilde{H}_{32}\tilde{y}_t+ \widetilde{H}_{33}\tilde{z}_t\right]\mathrm{d}B_t,\ \ \  t\in[0,T], \\
&-\mathrm{d}\tilde{y}_t=\left[\widetilde{H}_{11}\tilde{x}_t+\widetilde{H}_{12}\tilde{y}_t
       + \widetilde{H}_{13}\tilde{z}_t\right]\mathrm{d}t-\tilde{z}_t \mathrm{d}B_t, \indent   t\in[0,T],  \\
&\tilde{x}(0)=0,\indent   \tilde{y}(T)=0,
\end{aligned}
\right.
\end{equation}
whose coefficients matrices are as follows:
\begin{eqnarray*}
(\widetilde{H}_{ij})_{3\times3}
=\begin {bmatrix}
H_{23}H_{33}^{-1}H_{32}-H_{22}+\lambda h_{22}&H_{23}H_{33}^{-1}H_{31}-H_{21}&- H_{23}H_{33}^{-1}\\
H_{13}H_{33}^{-1}H_{32}-H_{12}&H_{13}H_{33}^{-1}H_{31}-H_{11}&- H_{13}H_{33}^{-1}\\
- H_{33}^{-1}H_{32}&- H_{33}^{-1}H_{31}&H_{33}^{-1}
\end{bmatrix}%\label{tilde-H-h22-1d}
\end{eqnarray*}
\begin{eqnarray}
\ \ {=}\begin {bmatrix}
H_{13}^2H_{33}-H_{22}+\lambda h_{22}&-H_{13}^2-H_{21}&H_{13}\\
-H_{13}^2-H_{21}&H_{13}^2H_{33}^{-1}-H_{11}&- H_{13}H_{33}^{-1}\\
H_{13}&- H_{33}^{-1}H_{31}&H_{33}^{-1}
\end{bmatrix},\label{tilde-H-h22-1d-simp}
\end{eqnarray}
where the equality in  \eqref{tilde-H-h22-1d-simp} is from \eqref{h23t=-h33th13t}.

Note that the boundary condition in Legendre dual transformation \eqref{dual-Hamilton-1d-h22} is degenerate.

Corresponding to (\ref{dual-Hamilton-1d-h22}), similar to  (\ref{riccati-lemma-n-sep-1})-(\ref{riccati-lemma-n-sep-3}) again, there is a dual Riccati equation:
\begin{equation}  \label{riccati-lemma-1-sep-h22}
\left\{
\begin{aligned}
&-\frac{\mathrm{d}\tilde{k}}{\mathrm{d}t}=\tilde{k}\left( \widetilde{H}_{21}+ \widetilde{H}_{22}\tilde{k}+ \widetilde{H}_{23}\tilde{m}\right) + \widetilde{H}_{11}+  \widetilde{H}_{12}\tilde{k}+ \widetilde{H}_{13}\tilde{m}, \indent   t\in [T_1,T_2],    \\
&\tilde{m}=\tilde{k}\left( \widetilde{H}_{31}+\widetilde{H}_{32}\tilde{k}+\widetilde{H}_{33}\tilde{m}\right), \indent   t\in [T_1,T_2],\\
&\tilde{k}\left(T_2\right)=\tilde{k}_{T_2},
\end{aligned}
\right.
\end{equation}
and forward SDE in the form of (\ref{forward-sde-decouple-lemma}):
\begin{equation} \label{forward-sde-decouple-lemma-1-dim-h22}
\left\{
\begin{aligned}
&\mathrm{d}\tilde{x}_t=\left[\widetilde{H}_{21}+ \widetilde{H}_{22}\tilde{k}+ \widetilde{H}_{23}\tilde{m}\right]\tilde{x}_t\mathrm{d}t   + \left[ \widetilde{H}_{31}+ \widetilde{H}_{32}\tilde{k}+ \widetilde{H}_{33}\tilde{m}\right]\tilde{x}_t\mathrm{d}B_t, \ \ \  t\in \left[T_1,T_2\right],   \\
&\tilde{x}(T_1)=x_0.
\end{aligned}
\right.
\end{equation}

For any $\bar{t}\in[0,T]$, in \eqref{riccati-lemma-n-sep-h22-k} and \eqref{riccati-lemma-1-sep-h22},  take $T_2=\bar{t}$, $k_{T_2}=0$ and $\tilde{k}_{T_2}=0$.
Then by \eqref{h23t=-h33th13t},
(\ref{riccati-lemma-n-sep-h22-k}) can  be simplified to:
\begin{equation} \label{general-riccati-function-case}
\left\{
\begin{aligned}
&-\frac {\mathrm{d}k}{\mathrm{d}t}=\left(2H_{21}+H_{13}^2\right)k
+H_{11}+\left(H_{22}-H_{33}H_{13}^2-\lambda h_{22}\right)k^2, \indent   t\le\bar{t},  \\
&k(\bar{t})=0,
\end{aligned}
\right.
\end{equation}
and (\ref{riccati-lemma-1-sep-h22})  can be simplified to:
\begin{equation} \label{general-dual-riccati-function}
\left\{
\begin{aligned}
&-\frac{\mathrm{d}\tilde{k}}{\mathrm{d}t}
=-\left(2H_{21}+H_{13}^2\right)\tilde{k}-H_{11}\tilde{k}^2
-\left(H_{22}-H_{33}H_{13}^2-\lambda h_{22}\right),\indent   t\le\bar{t},      \\
&\tilde{k}(\bar{t})=0.
\end{aligned}
\right.
\end{equation}

\textbf{Notations}: We give the following notations about the blow-up time of solutions to Riccati equations:
$$t_{\lambda,\bar{t}}^k\triangleq\sup\left\{t_0\bigg|t_0<\bar{t}, k(\bar{t};\lambda)=0,\lim_{t\searrow t_0} k(t;\lambda)=+\infty\right\}$$
with respect to (\ref{general-riccati-function-case}),
and
$$t_{\lambda,\bar{t}}^{\tilde{k}}\triangleq \sup\left\{t_0\bigg|t_0<\bar{t}, \tilde{k}(\bar{t};\lambda)=0, \lim_{t\searrow t_0} \tilde{k}(t;\lambda)=-\infty\right\}$$
with respect to (\ref{general-dual-riccati-function}).

From Legendre transformation, $k=\tilde{k}^{-1}$ whenever both of them are not equal to $0$,
then
$$t_{\lambda,\bar{t}}^k=\sup\left\{t_0\bigg|t_0<\bar{t},k(\bar{t};\lambda)=0, \lim_{t\searrow t_0} \tilde{k}(t;\lambda)=0\right\}$$
and
$$t_{\lambda,\bar{t}}^{\tilde{k}}=\sup\left\{t_0\bigg|t_0<\bar{t},\tilde{k}(\bar{t};\lambda)=0, \lim_{t\searrow t_0} k(t;\lambda)=0\right\}.$$

\emph{To simplify the notation, we will write $t_{\rho,\bar{t}}^k$ ($t_{\rho,\bar{t}}^{\tilde{k}}, resp.$) as $t_{\rho}^k$ ($t_{\rho}^{\tilde{k}}, resp.$) whenever without confusion.}

%%%%%%%%%%%%%%%%%%%%%%%%%%%%%%%%%%%%%%%%%%%%%%%%%%%%%%%%%%%%%%%%%%%%%%%%%%%%%%%%%%%%%%%%%%%%%%%%%%%%%%%%%%%%%%%%%%%%%%%%%%%

\section{The existence of eigenvalues}\label{section-existence-eigenvalue}
Analysing the property of blow-up time of solutions of Riccati equations with respect to its parameter $\lambda$ plays an important role, which is an enhanced version of the idea in \cite{peng}. However, the proofs are more complicated due to the time-dependent coefficients.

\begin{lem}  \label{lemma-k-function-h22}
The blow-up time $t_\lambda^k$ of solution $k(\cdot;\lambda)$ to (\ref{general-riccati-function-case}) is
increasing with respect to $\lambda$.
Besides,
\begin{equation} \label{limit-t-lambda-k-h22-1}
\lim_{\lambda\nearrow +\infty}t_\lambda^k=\bar{t}.
\end{equation}
\end{lem}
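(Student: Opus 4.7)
The plan is to establish both claims by combining a scalar Riccati comparison with a blow-up analysis of a constant-coefficient auxiliary equation obtained via a $\sqrt\lambda$-rescaling.

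\textbf{Monotonicity of $t_\lambda^k$.} Writing \eqref{general-riccati-function-case} as $-dk/dt=(2H_{21}+H_{13}^2)k+H_{11}+N(t,\lambda)k^2$ with
\[
N(t,\lambda):=H_{22}(t)-H_{33}(t)H_{13}^2(t)-\lambda h_{22}(t),
\]
we see that $\lambda\mapsto N(t,\lambda)$ is strictly increasing pointwise because $h_{22}<0$. By \eqref{moncond-H11H22H33} the constant term satisfies $H_{11}\ge\beta>0$, so Lemma \ref{const-term-posit-equation-posit} gives $k(\cdot;\lambda)>0$ on $(t_\lambda^k,\bar t)$. For $\lambda_1>\lambda_2$, the scalar version of Lemma \ref{comparison theorem} yields $k(\cdot;\lambda_1)\ge k(\cdot;\lambda_2)$ on any common interval of existence. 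A larger positive solution blows up to $+\infty$ at a time no smaller going backward from $\bar t$, hence $t_{\lambda_1}^k\ge t_{\lambda_2}^k$.

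\textbf{Limit as $\lambda\to+\infty$.} Set
\[
\check H_{11}=\min_{[0,T]}H_{11},\ \ \check h_{22}=\min_{[0,T]}(-h_{22}),\ \ \hat C=\max_{[0,T]}|2H_{21}+H_{13}^2|,\ \ M_0=\max_{[0,T]}|H_{22}-H_{33}H_{13}^2|,
\]
and let $a:=\lambda\check h_{22}-M_0$. For $\lambda$ with $a>0$, using $k\ge 0$, we minorize the right-hand side of \eqref{general-riccati-function-case}:
\[
-\frac{dk}{dt}\ge \check H_{11}-\hat Ck+ak^2.
\]
A direct scalar comparison---the difference $\psi:=k-u$ satisfies a linear differential inequality $-d\psi/dt\ge L(t)\psi$ with $\psi(\bar t)=0$---gives $k(\cdot;\lambda)\ge u(\cdot;\lambda)\ge 0$, where $u$ solves the constant-coefficient Riccati equation $-du/dt=\check H_{11}-\hat Cu+au^2$, $u(\bar t)=0$. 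Rescaling by $s=\sqrt a(\bar t-t)$ and $v(s)=\sqrt a\,u(t)$ transforms this into
\[
\frac{dv}{ds}=\check H_{11}+v^2-\frac{\hat C}{\sqrt a}\,v,\qquad v(0)=0.
\]
For $\lambda$ sufficiently large, AM-GM gives $(\hat C/\sqrt a)\,v\le\check H_{11}/2+v^2/2$, hence $dv/ds\ge \check H_{11}/2+v^2/2$. The latter ODE has the explicit sub-solution $\bar w(s)=\sqrt{\check H_{11}}\tan(\sqrt{\check H_{11}}\,s/2)$, which blows up at $s_\ast=\pi/\sqrt{\check H_{11}}$. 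Consequently $v$ blows up at some $s\le s_\ast$, so in the original variable the blow-up time of $u$, and a fortiori that of $k$, satisfies
\[
t_\lambda^k\ge \bar t-\frac{\pi}{\sqrt{(\lambda\check h_{22}-M_0)\check H_{11}}}\ \longrightarrow\ \bar t\quad\text{as }\lambda\to+\infty,
\]
which is exactly \eqref{limit-t-lambda-k-h22-1}.

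\textbf{Main obstacle.} The chief difficulty is that the terminal condition $k(\bar t)=0$ renders the dominant $k^2$ term inactive near $\bar t$, and the ``wrong-signed'' linear contribution $-\hat Ck$ prevents a naive bound. The scaling $s=\sqrt a(\bar t-t)$ is designed precisely so that $\check H_{11}$ and $v^2$ become comparable in the new variables and jointly force blow-up on the natural $1/\sqrt\lambda$ time scale, while the linear perturbation is simultaneously rendered $O(1/\sqrt a)$ and absorbed by AM-GM.
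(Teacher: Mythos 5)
Your proof is correct and uses essentially the same strategy as the paper's: monotonicity follows from the scalar comparison lemma, and the limit \eqref{limit-t-lambda-k-h22-1} is obtained by bounding $k(\cdot;\lambda)$ from below by the solution of a constant-coefficient Riccati equation whose explicit $\tan$-type solution blows up on a $\pi/\sqrt{\lambda}$ time scale. Your $\sqrt{a}$-rescaling combined with AM-GM is a slightly more streamlined way of handling the linear term than the paper's two-stage comparison $k\ge k_1\ge k_2$ (which disposes of the linear term by completing the square), but the underlying comparison idea and the resulting $O(1/\sqrt\lambda)$ bound on $\bar t - t_\lambda^k$ are the same.
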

\begin{proof}
We firstly prove (\ref{limit-t-lambda-k-h22-1}). By \eqref{moncond-H11H22H33} and \eqref{moncond-H22H23H33-1H32}, there is a $\beta>0$, such that for $t\in[0,T]$,
$$H_{11}(t)\ge \beta,\quad H_{22}(t)\le -\beta,\quad H_{33}(t)\le -\beta,\ \text{ and }\ \ H_{22}(t)-H_{33}(t)H_{13}^2(t)<0.$$
By Assumption \ref{assumption-1d-general-perturbation}, $h_{22}$ is continuous and $h_{22}(t)<0,\ \forall t\in[0,T]$.
Then we can choose two constants $\check{h}_{22}, \hat{h}_{22}$, such that
\begin{eqnarray*}
\check{h}_{22}\le h_{22}(t)\le \hat{h}_{22}<0,\indent  \forall t\in[0,T].
\end{eqnarray*}
Denote by $k_1(\cdot;\lambda)$ the solution to the following equation:
\begin{equation} \label{general-riccati-function-case-h22-k1}
\left\{
\begin{aligned}
&-\frac {\mathrm{d}k_1}{\mathrm{d}t}=\left(2H_{21}+H_{13}^2\right)k_1
+\beta+\left(H_{22}-H_{33}H_{13}^2-\lambda \hat{h}_{22}\right)k_1^2, \indent  t\le\bar{t},  \\
&k_1(\bar{t})=0.
\end{aligned}
\right.
\end{equation}
Since $\beta>0$, by Lemma \ref{const-term-posit-equation-posit}, for $t\le\bar{t}$, $k_1(t;\lambda)\ge0$.
Since $H_{11}(t)\ge \beta$ and $-\lambda h_{22}\ge-\lambda \hat{h}_{22}, t\in[0,T]$,
by Lemma \ref{elementary-compare}, $k(t;\lambda)\ge k_1(t;\lambda)\ge0, t\le\bar{t}$,
whence $t_\lambda^{k_1}\le t_\lambda^{k}<\bar{t}$.
Moreover, denote by $k_2(\cdot;\lambda)$ the solution to the following equation:
\begin{equation}  \label{general-riccati-function-case-h22-k2}
\left\{
\begin{aligned}
&-\frac {\mathrm{d}k_2}{\mathrm{d}t}=\frac{1}{2}\beta - \frac{1}{2}\lambda\hat{h}_{22}k_2^2, \indent  t\le\bar{t},  \\
&k_2(\bar{t})=0.
\end{aligned}
\right.
\end{equation}
Subtracting (\ref{general-riccati-function-case-h22-k2}) from (\ref{general-riccati-function-case-h22-k1}):
\begin{equation*} %\label{general-riccati-function-case-h22-k1-k2}
\left\{
\begin{aligned}
&-\frac {\mathrm{d}(k_1-k_2)}{\mathrm{d}t}=-\frac{1}{2}\lambda\hat{h}_{22}(k_1+k_2)(k_1-k_2)+\frac{\beta}{4}         \\
&\indent\indent\indent\indent\ \ +\left(2H_{21}(t)+H_{13}^2(t)\right)k_1
+\frac{\beta}{4}-\frac{\lambda}{4}\hat{h}_{22}k_1^2\\
&\indent\indent\indent\indent\ \ +\left(H_{22}(t)-H_{33}(t)H_{13}^2(t)
-\frac{\lambda}{4} \hat{h}_{22}\right)k_1^2, \indent t\le\bar{t},  \\
&(k_1-k_2)(\bar{t})=0.
\end{aligned}
\right.
\end{equation*}
\noindent
Since $H_{22}(t)-H_{33}(t)H_{13}^2(t)$ is bounded, for sufficiently large $\lambda$,
$$H_{22}(t)-H_{33}(t)H_{13}^2(t)-\frac{\lambda}{4} \hat{h}_{22} \ge0, \indent   \forall t\in[0,T].$$
Moreover, for sufficiently large $\lambda$ and $t\le\bar{t}$,
{\small\begin{equation*}
\begin{aligned}
&\left(2H_{21}(t)+H_{13}^2(t)\right)k_1\left(t;\lambda\right)+\frac{\beta}{4}
-\frac{\lambda}{4}\hat{h}_{22}k_1^2\left(t;\lambda\right)  \\
&\ \ \ \ \ = \left( \frac{ \sqrt{ -\lambda \hat{h}_{22}} k_1}{2} + \left(2H_{21}(t)+H_{13}^2(t)\right) \frac{1}{\sqrt{ -\lambda \hat{h}_{22}}}  \right)^2
+\frac{\beta}{4} +  \frac{\left(2H_{21}(t)+H_{13}^2(t)\right)^2}{\lambda \hat{h}_{22}}
\ge0.
\end{aligned}
\end{equation*}}
Then for
$t\le\bar{t}$ and sufficiently large $\lambda$,
\begin{eqnarray*}
&& \frac{1}{4}\beta
 +\left(\left(2H_{21}(t)+H_{13}^2(t)\right)k_1\left(t;\lambda\right)
 +\frac{\beta}{4}-\frac{\lambda}{4}\hat{h}_{22}k_1^2\left(t;\lambda\right)\right) \\
&&\indent  +\left(H_{22}(t)-H_{33}(t)H_{13}^2(t)-\frac{\lambda}{4} \hat{h}_{22}\right)k_1^2\left(t;\lambda\right)\ge\frac{\beta}{4}>0.
\end{eqnarray*}
By Lemma \ref{elementary-compare}, $k_1(t;\lambda)\ge k_2(t;\lambda)\ge0, t\le\bar{t}$,
whence $t_\lambda^{k_2}\le t_\lambda^{k_1}$.
It follows that, for sufficiently large $\lambda$,
$$t_\lambda^{k_2}\le t_\lambda^{k_1}\le t_\lambda^{k}<\bar{t}.$$
Moreover, (\ref{general-riccati-function-case-h22-k2}) is an equation with constant coefficients, and
\begin{equation*}
  k_2(t)=\sqrt{\frac{\beta}{\lambda (-\hat{h}_{22})}} \tan\left[ \frac{\sqrt{\lambda\beta(-\hat{h}_{22})}}{2} \left(\bar{t}-t\right)\right], \indent  \lambda>0,\ t\le\bar{t}.
\end{equation*}
Then
\begin{equation*}
  \frac{\sqrt{\lambda\beta(-\hat{h}_{22})}}{2} \left(\bar{t}-t_{\lambda}^{k_2}\right)=\frac{\pi}{2},
\end{equation*}
from which we get
$$\lim_{\lambda\nearrow +\infty} t_\lambda^{k_2}=\bar{t}.$$
Then $\lim_{\lambda\nearrow +\infty}t_\lambda^k=\bar{t}$, which is (\ref{limit-t-lambda-k-h22-1}).

For any $\lambda_1>\lambda_2>0$,
since $h_{22}(t)<0$,
$$-\lambda_1 h_{22}(t)>-\lambda_2 h_{22}(t),\indent   t\in[0,T].$$
By Lemma \ref{comparison theorem} and Lemma \ref{const-term-posit-equation-posit},
$$k(t;\lambda_1)\ge k(t;\lambda_2)\ge0,\indent   t\le \bar{t}.$$
Therefore,  for $0<\lambda_2<\lambda_1$, $t_{\lambda_2}^k\le t_{\lambda_1}^k$.
\end{proof}

Since $t_\lambda^k$ is increasing in $\lambda$ and $\lim_{\lambda\nearrow+\infty}t_\lambda^{k}=\bar{t}$,
we can define
\begin{equation*} %\label{lambda0-bart-k-h22}
\lambda_0\left(\bar{t},k\right)\triangleq \inf\left\{\lambda\Big|\lambda\ge0, t_\lambda^k>-\infty, k(\bar{t};\lambda)=0\right\}.
\end{equation*}
Set
\begin{eqnarray}\label{lambda-b-h22-uniform-posit}
&&\lambda_b\triangleq \frac{\min_{t\in[0,T]}\left\{H_{22}(t)-H_{33}(t)H_{13}^2(t)\right\}}
{\max_{t\in[0,T]}\left\{h_{22}(t)\right\}} .
\end{eqnarray}
% \notag\\&&\  \indent  \vee-\frac{\max_{t\in[0,T]}\left\{ \left(2H_{21}(t)+H_{13}^2(t)\right)^2-4H_{11}(t)\left(H_{22}(t)-H_{33}(t)H_{13}^2(t)\right) \right\}}{\max_{t\in[0,T]}\left\{H_{11}(t)h_{22}(t)\right\}}
Then obviously
\begin{equation}\label{lambda-b-h22-uniform-posit-quadratic-posit}
H_{22}(t)-H_{33}(t)H_{13}^2(t)-\lambda h_{22}(t)>0,\indent   t\in[0,T],\   \lambda\ge\lambda_b.
\end{equation}
%In this case, together with \eqref{lambda-b-h22-uniform-posit-quadratic-posit}, coefficients (\ref{tilde-H-h22-1d-simp}) satisfies the monotonicity condition (\ref{moncondition}).

\begin{lem}  \label{lemma-k-function-h22-copy}
Following the notations above, the blow-up time $t_\lambda^k$
%of solution $k(\cdot,\lambda)$ to (\ref{general-riccati-function-case})
is continuous and strictly increasing in
$\left(\lambda_0(\bar{t},k)\vee \lambda_b,+\infty\right)$.
\end{lem}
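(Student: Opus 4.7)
The plan is to work throughout with the desingularizing substitution $u:=1/k$. A direct calculation from \eqref{general-riccati-function-case} will yield that, wherever $k\neq 0$,
\begin{equation*}
\dot u = (2H_{21}+H_{13}^{2})\,u + H_{11}\,u^{2} + (H_{22} - H_{33}H_{13}^{2} - \lambda h_{22}).
\end{equation*}
Since $k(\cdot;\lambda)>0$ on $(t_{\lambda}^{k},\bar{t}]$ (by Lemma~\ref{const-term-posit-equation-posit}) and $k\to+\infty$ as $t\searrow t_{\lambda}^{k}$, the reciprocal $u(\cdot;\lambda)$ extends continuously to $[t_{\lambda}^{k},\bar{t})$ with $u(t_{\lambda}^{k};\lambda)=0$, and the $u$-ODE smoothly continues it past $t_{\lambda}^{k}$ into the region $u<0$. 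Because $\lambda>\lambda_{b}$, the constant term in this $u$-ODE is strictly positive by \eqref{lambda-b-h22-uniform-posit-quadratic-posit}, whence $\dot u(t_{\lambda}^{k};\lambda)>0$; that is, $u$ crosses zero \emph{transversally} at $t_{\lambda}^{k}$. This transversality is the single mechanism driving both parts of the proof.

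\textbf{Strict monotonicity.} Take $\lambda_{1}>\lambda_{2}>\lambda_{0}(\bar{t},k)\vee\lambda_{b}$. Lemma~\ref{lemma-k-function-h22} already yields $t_{\lambda_{1}}^{k}\geq t_{\lambda_{2}}^{k}$; I would assume for contradiction that equality holds, writing $t^{*}:=t_{\lambda_{1}}^{k}=t_{\lambda_{2}}^{k}$. Applying Lemma~\ref{comparison theorem} (with $-\lambda_{1}h_{22}\geq-\lambda_{2}h_{22}$) gives $k(\cdot;\lambda_{1})\geq k(\cdot;\lambda_{2})>0$ on $(t^{*},\bar{t}]$, so $\delta:=u_{2}-u_{1}\geq 0$ on $[t^{*},\bar{t})$ with $\delta(t^{*})=0$. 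Subtracting the two $u$-ODEs gives
\begin{equation*}
\dot\delta = \bigl[(2H_{21}+H_{13}^{2}) + H_{11}(u_{1}+u_{2})\bigr]\,\delta + (\lambda_{1}-\lambda_{2})\,h_{22}.
\end{equation*}
At $t=t^{*}$, $u_{1}=u_{2}=0$, so $\dot\delta(t^{*})=(\lambda_{1}-\lambda_{2})h_{22}(t^{*})<0$, which forces $\delta(t^{*}+\varepsilon)<0$ for small $\varepsilon>0$ and contradicts $\delta\geq 0$. Hence $t_{\lambda_{1}}^{k}>t_{\lambda_{2}}^{k}$.

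\textbf{Continuity.} I fix $\lambda^{*}\in(\lambda_{0}(\bar{t},k)\vee\lambda_{b},+\infty)$ and let $\lambda_{n}\to\lambda^{*}$. By Lemma~\ref{lemma-k-function-h22} the sequence $t_{\lambda_{n}}^{k}$ converges to some $L$, with $L\leq t_{\lambda^{*}}^{k}$ (resp.\ $L\geq t_{\lambda^{*}}^{k}$) when $\lambda_{n}\nearrow\lambda^{*}$ (resp.\ $\lambda_{n}\searrow\lambda^{*}$). Suppose $L\neq t_{\lambda^{*}}^{k}$ and pick $t_{0}$ strictly between $L$ and $t_{\lambda^{*}}^{k}$. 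By the transversality $\dot u>0$ at each zero, for all sufficiently large $n$ the point $t_{0}$ lies on the positive side of the zero of $u(\cdot;\lambda_{n})$ and on the negative side of the zero of $u(\cdot;\lambda^{*})$, or vice versa; thus $u(t_{0};\lambda_{n})$ and $u(t_{0};\lambda^{*})$ have opposite strict signs. But continuous dependence on parameters for the (smoothly extended) $u$-ODE yields $u(t_{0};\lambda_{n})\to u(t_{0};\lambda^{*})$, a contradiction. Hence $L=t_{\lambda^{*}}^{k}$, and continuity holds at $\lambda^{*}$.

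\textbf{Main obstacle.} The decisive move is the reciprocal substitution $u=1/k$: it converts the singular event ``$k$ blows up'' into the regular event ``$u$ passes through zero'', after which both halves of the argument become short---strict monotonicity reduces to a one-line sign check for $\dot\delta(t^{*})$, and continuity follows by a sign-flip argument coupled with standard ODE continuous dependence. The only subtle point is ensuring that the zero of $u$ at $t_{\lambda}^{k}$ is simple; this is precisely the role of the hypothesis $\lambda>\lambda_{b}$, which guarantees $\dot u(t_{\lambda}^{k};\lambda)>0$ and, consequently, a clean sign separation on either side of the zero.
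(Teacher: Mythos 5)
Your substitution $u=1/k$ is exactly the paper's Legendre-dual solution $\tilde{k}$: direct computation from \eqref{general-riccati-function-case} gives $\dot u = (2H_{21}+H_{13}^{2})u + H_{11}u^{2} + (H_{22}-H_{33}H_{13}^{2}-\lambda h_{22})$, which is precisely \eqref{general-dual-riccati-function}. So you are working with the same dual Riccati equation as the paper, just derived by direct reciprocation rather than by Legendre transform of the Hamiltonian. The transversality $\dot u(t_{\lambda}^{k};\lambda)>0$ (from $\lambda>\lambda_b$ and \eqref{lambda-b-h22-uniform-posit-quadratic-posit}) is the same observation the paper uses (they phrase it as $-\mathrm{d}\tilde{k}/\mathrm{d}t<-\delta_1$ at the blow-up time).

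Where you genuinely differ is in the strict-monotonicity argument. The paper applies Lemma~\ref{elementary-compare} twice: once to get $k(\cdot;\lambda_1)>k(\cdot;\lambda_2)$, then --- after flipping to $\tilde k$ at an intermediate terminal time $\bar t_1$ --- once more on the dual equation to push $\tilde k(t_{\lambda_2}^k;\lambda_1)<0$. You instead suppose the two blow-up times are equal at $t^*$, form $\delta=u_2-u_1$, observe $\delta\geq 0$ on $(t^*,\bar t]$ and $\delta(t^*)=0$ from the comparison of $k_1,k_2$, and then read off $\dot\delta(t^*)=(\lambda_1-\lambda_2)h_{22}(t^*)<0$ directly from the difference equation, an immediate contradiction. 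This is a cleaner, more self-contained route that does not invoke the comparison theorem on the dual equation at all; it buys you a one-line sign check at the critical point. The continuity half is essentially the same mechanism as the paper's two-sided $\epsilon$-argument, phrased more abstractly as a sign-flip plus continuous parameter dependence.

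One point worth tightening. Your continuity argument silently uses that $u(\cdot;\lambda)$ changes sign at $t_\lambda^k$ and \emph{stays} of one sign on each side wherever it is defined; this follows because $\dot u>0$ at every zero (transversality everywhere, not just at $t_\lambda^k$) precludes a second zero on any interval of existence, but you should say this explicitly rather than leave it under the word ``transversality.'' You should also note that, in choosing $t_0$ strictly between $L$ and $t_{\lambda^*}^k$, it suffices (and may be necessary) to take $t_0$ near enough to $t_{\lambda^*}^k$ that the extended $u(\cdot;\lambda^*)$ is still defined there, and then invoke continuous dependence on a fixed compact interval containing $t_0$ to ensure $u(t_0;\lambda_n)$ is defined for large $n$. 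The paper handles this by extending $\tilde k$ to an interval $[\bar t_2,\bar t_1]\supsetneqq[t^k_{\lambda'},\bar t_1]$ via local Lipschitzness. With these points spelled out, your proposal is correct. (Minor slip: $k(\cdot;\lambda)>0$ on $(t_\lambda^k,\bar t)$, not up to $\bar t$, since $k(\bar t)=0$.)
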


\begin{proof}
%(\ref{limit-t-lambda-k-h22-2}) is from the definition of $\lambda_0(\bar{t},k)$ in (\ref{lambda0-bart-k-h22}).

Firstly, we prove that $t_\lambda^k$ is continuous in $\left(\lambda_0(\bar{t},k)\vee \lambda_b,+\infty\right)$.
Recall that for $\lambda'\in(\lambda_0(\bar{t},k)\vee \lambda_b, +\infty)$, the blow-up time $t_{\lambda'}^k$   satisfies
$\lim_{t\searrow t_{\lambda'}^k}k\left(t;\lambda'\right)=+\infty$.
Then in (\ref{general-riccati-function-case}), there is a $\delta_1>0$, such that $$H_{22}(t_{\lambda'}^k)-H_{33}(t_{\lambda'}^k)H_{13}^2(t_{\lambda'}^k)-\lambda h_{22}(t_{\lambda'}^k)>\delta_1>0.$$
Moreover, in (\ref{general-dual-riccati-function}),
since $\tilde{k}(t_{\lambda'}^k; \lambda')=0$,
$$-\frac{\mathrm{d}\tilde{k}}{\mathrm{d}t}\bigg|_{t=t_{\lambda'}^k}
=-\left(H_{22}(t)-H_{33}(t)H_{13}^2(t)-\lambda h_{22}(t)\right)\bigg|_{t=t_{\lambda'}^k}<-\delta_1.$$
Further, by the continuity of $-\frac{\mathrm{d}\tilde{k}}{\mathrm{d}t}$ and $\tilde{k}\left(t_{\lambda'}^k; \lambda'\right)=0$,
there is a $\delta_2>0$, such that
\begin{eqnarray*}
-\frac{\mathrm{d}\tilde{k}}{\mathrm{d}t}\bigg|_{t=t_0}<-\frac{\delta_1}{2}<0, \indent
\forall t_0\in\left[t_{\lambda'}^k-\delta_2,t_{\lambda'}^k+\delta_2\right].
\end{eqnarray*}
Then for $\forall \epsilon_1\in(0,\delta_2)$, by Lagrangian Middle-Value Theorem,
$$\tilde{k}\left(t_{\lambda'}^k-\epsilon_1; \lambda'\right)<-\frac{\delta_1\epsilon_1}{2}<0.$$
By the continuous dependence of solution $\tilde{k}$ to (\ref{general-dual-riccati-function}) with respect to parameter $\lambda'$:
\begin{equation*}
\lim_{\lambda\rightarrow\lambda'} \left|\tilde{k}\left(t_{\lambda'}^k-\epsilon_1; \lambda'\right)-\tilde{k}\left(t_{\lambda'}^k-\epsilon_1; \lambda\right)\right|=0,
\end{equation*}
there is a $\delta_{\epsilon_1}>0$, such that for $\forall \lambda\in(\lambda'-\delta_{\epsilon_1},\lambda'+\delta_{\epsilon_1})$,
$\tilde{k}\left(t_{\lambda'}^k-\epsilon_1; \lambda\right)<0$.
Then by the definition of $t_{\lambda}^k$,
\begin{equation}\label{h22-t-lambda-k-continu-left}
t_{\lambda}^k>t_{\lambda'}^k-\epsilon_1.
\end{equation}
On the other hand, choose $\bar{t}_1\in(t_{\lambda'}^k, \bar{t})$. Recall that from Legendre transformation, $k^{-1}(t;\lambda)=\tilde{k}(t;\lambda)$ whenever both of them are not 0.
Besides, $\tilde{k}(t;\lambda')>0, t\in(t_{\lambda'}^k, \bar{t}_1]$.
Then we consider (\ref{general-dual-riccati-function}) with terminal condition $\tilde{k}(\bar{t}_1; \lambda')=k^{-1}(\bar{t}_1; \lambda')$:
\begin{equation} \label{tilde-k-h22-conti-exten}
\left\{
\begin{aligned}
&-\frac{\mathrm{d}\tilde{k}}{\mathrm{d}t}
=-\left(2H_{21}+H_{13}^2\right)\tilde{k}-H_{11}\tilde{k}^2
-\left(H_{22}-H_{33}H_{13}^2-\lambda' h_{22}\right),\indent   t\le\bar{t}_1,      \\
&\tilde{k}(\bar{t}_1)=k^{-1}(\bar{t}_1; \lambda').
\end{aligned}
\right.
\end{equation}
Solution $\tilde{k}(\cdot;\lambda')$ to (\ref{tilde-k-h22-conti-exten}) can be extended to $[\bar{t}_2,\bar{t}_1]\supsetneqq [t^k_{\lambda'}, \bar{t}_1]$ due to its local Lipschitz coefficients.
From the continuous dependence of solution $\tilde{k}$ to (\ref{general-dual-riccati-function}) with respect to parameter $\lambda'$,
$$\lim_{\lambda\rightarrow\lambda'} \sup_{t\in[\bar{t}_2,\bar{t}_1]}\left|\tilde{k}(t;\lambda')-\tilde{k}(t;\lambda)\right|=0.$$
For any sufficiently small $\epsilon_2>0$, $\tilde{k}(t;\lambda')$ have uniform strictly positive lower bound for $t\in[t^k_{\lambda'}+\epsilon_2, \bar{t}_1]$.
Then there is a $\delta_{\epsilon_2}>0$, such that for $\forall \lambda\in(\lambda'-\delta_{\epsilon_2},\lambda'+\delta_{\epsilon_2})$ and
$\forall t\in\left[t^k_{\lambda'}+\epsilon_2, \bar{t}_1\right]$, $\tilde{k}(t;\lambda)>0$.
Then by the definition of $t_{\lambda}^k$,
\begin{equation}\label{h22-t-lambda-k-continu-right}
t_\lambda^k<t^k_{\lambda'}+\epsilon_2.
\end{equation}
From \eqref{h22-t-lambda-k-continu-left} and \eqref{h22-t-lambda-k-continu-right},
$t_\lambda^k$ is continuous in $\lambda$.

At last,  we prove that $t_\lambda^k$ is strictly increasing with respect to $\lambda$.
For $\lambda>\lambda'$,
$$-\lambda h_{22}(t)>-\lambda' h_{22}(t),\indent   t\in[0,T].$$
Then by Lemma \ref{elementary-compare}, $k(t;\lambda)>k(t;\lambda'), t<\bar{t}$.
In particular, $k(\bar{t}_1;\lambda)>k(\bar{t}_1;\lambda')$, whence $\tilde{k}(\bar{t}_1;\lambda)<\tilde{k}(\bar{t}_1;\lambda')$.
Consider the following two ODEs with terminal time $\bar{t}_1$:
\begin{equation*} %\label{h22-dual-strict-continuous}
\left\{
\begin{aligned}
&-\frac{\mathrm{d}\tilde{k}}{\mathrm{d}t}
=-\left(2H_{21}+H_{13}^2\right)\tilde{k}-H_{11}\tilde{k}^2
-\left(H_{22}-H_{33}H_{13}^2-\lambda h_{22}\right),\indent   t\le\bar{t}_1,      \\
&\tilde{k}(\bar{t}_1)=\tilde{k}(\bar{t}_1;\lambda),
\end{aligned}
\right.
\end{equation*}
and
\begin{equation*} %\label{h22-dual-strict-continuous}
\left\{
\begin{aligned}
&-\frac{\mathrm{d}\tilde{k}}{\mathrm{d}t}
=-\left(2H_{21}+H_{13}^2\right)\tilde{k}-H_{11}\tilde{k}^2
-\left(H_{22}-H_{33}H_{13}^2-\lambda' h_{22}\right),\indent   t\le\bar{t}_1,      \\
&\tilde{k}(\bar{t}_1)=\tilde{k}(\bar{t}_1;\lambda').
\end{aligned}
\right.
\end{equation*}
From $\tilde{k}(\bar{t}_1;\lambda)<\tilde{k}(\bar{t}_1;\lambda')$ and $-\lambda h_{22}(t)>-\lambda' h_{22}(t), t\in[0,T]$, by Lemma \ref{elementary-compare}, we have $\tilde{k}(t; \lambda)< \tilde{k}(t; \lambda'), t\le \bar{t}_1$.
In particular, $\tilde{k}(t_{\lambda'}^k; \lambda)< \tilde{k}(t_{\lambda'}^k; \lambda')=0$, and hence for $\lambda>\lambda'$, $t_{\lambda}^k>t_{\lambda'}^k$.
\end{proof}

Next, we also need the similar results of $t_\lambda^{\tilde{k}}$ with respect to $\lambda$, the proofs of which are similar to that of Lemma \ref{lemma-k-function-h22} and Lemma \ref{lemma-k-function-h22-copy}. We will write down the proofs in appendix  for readers' convenience.

\begin{lem}   \label{lemma-k-tilde-function-h22}
The blow-up time $t_\lambda^{\tilde{k}}$ of solution $\tilde{k}(\cdot;\lambda)$ to (\ref{general-dual-riccati-function})
is increasing in $\lambda$,
%\ $(\lambda_0(\bar{t},\tilde{k})\vee \lambda_b, +\infty)$.
and
\begin{equation} \label{limit-tilde-t-lambda-k-h22-1}
\lim_{\lambda\nearrow+\infty}t_\lambda^{\tilde{k}}=\bar{t}.
\end{equation}
\end{lem}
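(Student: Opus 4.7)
The plan is to mirror the proof of Lemma \ref{lemma-k-function-h22}, working with $v := -\tilde{k}$ so that \eqref{general-dual-riccati-function} becomes
\begin{equation*}
-\frac{\mathrm{d}v}{\mathrm{d}t} \;=\; H_{11}\, v^2 - (2H_{21}+H_{13}^2)\, v + \bigl(H_{22} - H_{33}H_{13}^2 - \lambda h_{22}\bigr), \qquad v(\bar{t}) = 0.
\end{equation*}
This has positive leading coefficient $H_{11}\ge\beta$, and for $\lambda\ge\lambda_b$ the constant term is uniformly positive by \eqref{lambda-b-h22-uniform-posit-quadratic-posit}; Lemma \ref{const-term-posit-equation-posit} then yields $v(t)>0$ for $t<\bar{t}$, so $t_\lambda^{\tilde{k}}$ corresponds to the blow-up $v\to+\infty$.

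For the limit $t_\lambda^{\tilde{k}}\to\bar{t}$, I would complete the square in $v$: since $(2H_{21}+H_{13}^2)$ is bounded on $[0,T]$, there is a constant $C_0$ such that, for all $v\ge 0$,
\begin{equation*}
H_{11} v^2 - (2H_{21}+H_{13}^2) v \;\ge\; \tfrac12\beta\, v^2 - C_0.
\end{equation*}
Using $-\lambda h_{22}(t)\ge\lambda|\hat{h}_{22}|$ together with the boundedness of $H_{22}-H_{33}H_{13}^2$, for all sufficiently large $\lambda$ one gets $-\mathrm{d}v/\mathrm{d}t\ge \tfrac12\beta v^2 + \tfrac12\lambda|\hat{h}_{22}|$ on $[0,T]$. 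Comparing (by the scalar form of Lemma \ref{elementary-compare}) with the constant-coefficient Riccati
\begin{equation*}
-v_0' \;=\; \tfrac12\beta\, v_0^2 + \tfrac12\lambda|\hat{h}_{22}|, \qquad v_0(\bar{t})=0,
\end{equation*}
whose explicit $\tan$-solution blows up at $\bar{t}-\pi/\sqrt{\beta\lambda|\hat{h}_{22}|}$, gives $v\ge v_0$ on their common domain, hence $t_\lambda^{\tilde{k}}\ge \bar{t}-\pi/\sqrt{\beta\lambda|\hat{h}_{22}|}\to\bar{t}$ as $\lambda\nearrow+\infty$.

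For monotonicity, given $\lambda_1>\lambda_2\ge\lambda_b$ I would set $w:=v_{\lambda_1}-v_{\lambda_2}$ and subtract the two equations. A direct computation yields the \emph{linear} ODE on the common existence interval
\begin{equation*}
-\frac{\mathrm{d}w}{\mathrm{d}t} \;=\; \bigl[H_{11}(v_{\lambda_1}+v_{\lambda_2})-(2H_{21}+H_{13}^2)\bigr]\, w \;+\; (\lambda_1-\lambda_2)(-h_{22}), \qquad w(\bar{t})=0,
\end{equation*}
whose forcing term is uniformly bounded below by $(\lambda_1-\lambda_2)|\hat{h}_{22}|>0$. Lemma \ref{const-term-posit-equation-posit} (or directly variation of constants) then gives $w(t)>0$ for $t<\bar{t}$, so $v_{\lambda_1}>v_{\lambda_2}$ on their common domain; since both start at $0$ at $\bar{t}$ and $v_{\lambda_1}$ is the larger one throughout, $v_{\lambda_1}$ blows up no later in backward time, i.e., $t_{\lambda_1}^{\tilde{k}}\ge t_{\lambda_2}^{\tilde{k}}$ (the extended convention $t_\lambda^{\tilde{k}}=-\infty$ when no blow-up occurs handles small $\lambda$).

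The principal obstacle is routine bookkeeping: ensuring that the completion-of-square estimate is uniform in $t\in[0,T]$, and that the comparison with $v_0$ persists on the full interval where both sides are finite. These are the same technicalities handled in Lemma \ref{lemma-k-function-h22} and Lemma \ref{lemma-k-function-h22-copy}, and no new idea beyond those lemmata is required.
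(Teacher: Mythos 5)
Your proof is correct and follows essentially the same strategy as the paper's appendix proof: bound the Riccati solution from below by a constant-coefficient Riccati whose explicit $\tan$-solution gives the blow-up rate, and obtain monotonicity by subtracting the equations and invoking the linear comparison lemma. The change of variable $v=-\tilde{k}$ and the single-step comparison (the paper inserts an intermediate comparison function $\tilde{k}_1$ before reaching the constant-coefficient equation) are cosmetic simplifications, not a genuinely different route.
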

%The definition of $\lambda_0(\bar{t},\tilde{k})$ can be seen in \eqref{lambda0-bart-tilde-k-h22}.

Since $t_{\lambda}^{\tilde{k}}$ is increasing and $\lim_{\lambda\nearrow+\infty}t_\lambda^{\tilde{k}}=\bar{t}$,
we can define
\begin{equation*} %\label{lambda0-bart-tilde-k-h22}
\lambda_0\left(\bar{t},\tilde{k}\right)\triangleq \inf\left\{\lambda\Big|\lambda\ge0, t_{\lambda}^{\tilde{k}}>-\infty, \tilde{k}(\bar{t};\lambda)=0\right\}.
\end{equation*}
\begin{lem}   \label{lemma-k-tilde-function-h22-cop}
Following the above notations, the blow-up time $t_\lambda^{\tilde{k}}$
is continuous and strictly increasing in $\left(\lambda_0\left(\bar{t},\tilde{k}\right)\vee \lambda_b,+\infty\right)$.
\end{lem}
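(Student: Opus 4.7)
The plan is to mirror the proof of Lemma \ref{lemma-k-function-h22-copy} step by step, interchanging the roles of the primal Riccati equation \eqref{general-riccati-function-case} and the dual Riccati equation \eqref{general-dual-riccati-function}. The two equations are Legendre duals, related pointwise by $k=\tilde{k}^{-1}$ whenever both sides are nonzero, so a blow-down of $\tilde{k}$ to $-\infty$ corresponds to a zero of the reciprocal primal solution. The coefficient $H_{22}-H_{33}H_{13}^2-\lambda h_{22}$, which played the decisive role in the original proof and required $\lambda>\lambda_b$ for sign control, is now replaced by $H_{11}$; the latter is bounded below by $\beta>0$ uniformly in $t$ and $\lambda$ by the monotonicity condition \eqref{moncond-H11H22H33}, so no extra restriction on $\lambda$ is needed at this step.

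For continuity at fixed $\lambda'\in(\lambda_0(\bar{t},\tilde{k})\vee\lambda_b,+\infty)$: since $\tilde{k}(t;\lambda')\to-\infty$ as $t\searrow t_{\lambda'}^{\tilde{k}}$, the reciprocal $k:=\tilde{k}^{-1}$ satisfies $k(t_{\lambda'}^{\tilde{k}};\lambda')=0$, so substituting into \eqref{general-riccati-function-case} gives $-k'|_{t_{\lambda'}^{\tilde{k}}}=H_{11}(t_{\lambda'}^{\tilde{k}})\ge\beta>0$. Continuity of $k'$ and the mean-value theorem then yield $k(t_{\lambda'}^{\tilde{k}}-\epsilon_1;\lambda')\ge\beta\epsilon_1/2>0$ for small $\epsilon_1$, and continuous dependence of the primal Riccati on $\lambda$ extends this strict positivity to $\lambda$ in a neighborhood of $\lambda'$; unwinding the reciprocal gives $t_\lambda^{\tilde{k}}>t_{\lambda'}^{\tilde{k}}-\epsilon_1$. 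The reverse inequality $t_\lambda^{\tilde{k}}<t_{\lambda'}^{\tilde{k}}+\epsilon_2$ is obtained symmetrically: choose $\bar{t}_1\in(t_{\lambda'}^{\tilde{k}},\bar{t})$, extend $k(\cdot;\lambda')=1/\tilde{k}(\cdot;\lambda')$ past $t_{\lambda'}^{\tilde{k}}$ onto some $[\bar{t}_2,\bar{t}_1]\supsetneqq[t_{\lambda'}^{\tilde{k}},\bar{t}_1]$ via local Lipschitzness of \eqref{general-riccati-function-case}, and use continuous dependence to conclude that $k(\cdot;\lambda)$ stays uniformly bounded away from $0$ (strictly negative) on $[t_{\lambda'}^{\tilde{k}}+\epsilon_2,\bar{t}_1]$, so $\tilde{k}(\cdot;\lambda)$ remains bounded there and cannot blow up in this interval.

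For strict monotonicity: fix $\lambda>\lambda'$. Since $h_{22}<0$, we have $-\lambda h_{22}>-\lambda' h_{22}$, making the constant term of \eqref{general-dual-riccati-function} strictly more negative for $\lambda$; Lemma \ref{elementary-compare} then gives $\tilde{k}(\bar{t}_1;\lambda)<\tilde{k}(\bar{t}_1;\lambda')$ at any $\bar{t}_1\in(t_{\lambda'}^{\tilde{k}},\bar{t})$. Passing to reciprocals yields $k(\bar{t}_1;\lambda)>k(\bar{t}_1;\lambda')$, and feeding this into Lemma \ref{elementary-compare} for \eqref{general-riccati-function-case} (whose $k^2$-coefficient also increases with $\lambda$) propagates the strict inequality backward to $k(t_{\lambda'}^{\tilde{k}};\lambda)>k(t_{\lambda'}^{\tilde{k}};\lambda')=0$; since $k(\cdot;\lambda)<0$ on the interval where $\tilde{k}(\cdot;\lambda)$ is finite, strict positivity at $t_{\lambda'}^{\tilde{k}}$ forces $t_{\lambda'}^{\tilde{k}}<t_\lambda^{\tilde{k}}$. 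The main obstacle is bookkeeping: throughout, one must keep careful track of whether $k$ (or $\tilde{k}$) refers to the original solution with zero terminal condition at $\bar{t}$ or to its reciprocal extension obtained from Legendre duality, and verify at each step that the continuous-dependence statement being invoked applies to the correct family of solutions.
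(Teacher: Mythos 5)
Your proposal is correct and follows essentially the same route as the paper's own proof, which (as the paper notes) mirrors Lemma \ref{lemma-k-function-h22-copy} with the primal and dual Riccati equations swapped: the derivative bound $-k'|_{t=t_{\lambda'}^{\tilde{k}}}=H_{11}\ge\beta$, the mean-value/continuous-dependence argument for both one-sided continuity estimates, and the two-stage comparison (first on $\tilde{k}$ via Lemma \ref{comparison theorem}, then on the extended $k$ via Lemma \ref{elementary-compare} from terminal time $\bar{t}_1$ back to $t_{\lambda'}^{\tilde{k}}$) for strict monotonicity all match. The only small imprecision is your parenthetical that the $k^2$-coefficient "also increases with $\lambda$" being the reason the second comparison works: what actually matters after subtracting the two primal equations is that the inhomogeneous term $(\lambda'-\lambda)h_{22}k^2(\cdot;\lambda')$ is nonnegative (since $\lambda>\lambda'$ and $h_{22}<0$), together with the strictly positive terminal gap; but the argument you sketch still goes through.
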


To get eigenvalues of \eqref{generel-function-eigen-problem}, we also need property of $t^k_{\lambda,\bar{t}}$ and $t^{\tilde{k}}_{\lambda,\bar{t}}$ in $\bar{t}$ depicted by the following two lemmas.
\begin{lem} \label{lemma-riccati-func-mon-terminal-h22}
For $0\le\bar{t}_2<\bar{t}_1\le T$, denote separately by $k_i(\cdot;\lambda),\ i=1,2$  the solution to (\ref{riccati-func-mon-terminal-h22})
\begin{equation}\label{riccati-func-mon-terminal-h22}
\left\{
\begin{aligned}
&-\frac {\mathrm{d}k_i}{\mathrm{d}t}=\left(2H_{21}+H_{13}^2\right)k_i
+H_{11}+\left( H_{22}-H_{33}H_{13}^2 -\lambda h_{22}\right)k_i^2, \indent  t\le \bar{t}_i,\\
&k_i(\bar{t}_i)=0.
\end{aligned}
\right.
\end{equation}
Then  $t_\lambda^{k_2}\le t_\lambda^{k_1}$(if finite).
Besides, $t_\lambda^{k_i}$ is continuous dependent on $\bar{t}_i,\ i=1,2$.
\end{lem}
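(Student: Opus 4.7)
The plan is to prove the two claims separately.

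For the monotonicity claim, the strategy is a direct comparison. By the monotonicity condition \eqref{moncondition}, $H_{11}(t) \ge \beta > 0$, so Lemma \ref{const-term-posit-equation-posit} applied with $\psi_1 = H_{11}$ to equation \eqref{riccati-func-mon-terminal-h22} gives $k_1(t; \lambda) > 0$ for $t < \bar{t}_1$. In particular, since $\bar{t}_2 < \bar{t}_1$, we have $k_1(\bar{t}_2; \lambda) > 0 = k_2(\bar{t}_2; \lambda)$. On $(-\infty, \bar{t}_2]$ both $k_1$ and $k_2$ solve the same Riccati equation, so Lemma \ref{comparison theorem} gives $k_1(t; \lambda) \ge k_2(t; \lambda)$ throughout that interval. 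If $t_\lambda^{k_2}$ is finite, then $k_2(t) \to +\infty$ as $t \searrow t_\lambda^{k_2}$ forces $k_1(t) \to +\infty$ at the same point, so $k_1$ has blown up no earlier (in backward time), i.e.\ $t_\lambda^{k_1} \ge t_\lambda^{k_2}$.

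For the continuity of $t_\lambda^{k_i}$ in $\bar{t}_i$ (when this blow-up time is finite), my plan is to mimic the dual-Riccati argument in the proof of Lemma \ref{lemma-k-function-h22-copy}, but now varying the terminal time instead of $\lambda$. Let $\tilde{k}_i(\cdot;\bar{t}_i)$ denote the dual solution to \eqref{general-dual-riccati-function} with $\tilde{k}_i(\bar{t}_i;\bar{t}_i) = 0$; by the Legendre correspondence already used in the paper, $\tilde{k}_i$ vanishes again at $t = t_\lambda^{k_i}$. At that second zero, the dual Riccati gives
\begin{equation*}
\frac{\mathrm{d}\tilde{k}_i}{\mathrm{d}t}\bigg|_{t = t_\lambda^{k_i}} = \bigl(H_{22} - H_{33} H_{13}^2 - \lambda h_{22}\bigr)\bigg|_{t = t_\lambda^{k_i}} > 0
\end{equation*}
in the relevant blow-up regime (e.g.\ $\lambda \ge \lambda_b$ via \eqref{lambda-b-h22-uniform-posit-quadratic-posit}), so this second zero is simple. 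Continuous dependence of ODE solutions on terminal data then delivers uniform convergence of $\tilde{k}_i(\cdot;\bar{t}_i')$ to $\tilde{k}_i(\cdot;\bar{t}_i)$ on compact subsets of the common domain as $\bar{t}_i' \to \bar{t}_i$, and the simple-zero property transports this into continuity of the second zero. Concretely I would mirror the two-sided bracketing of Lemma \ref{lemma-k-function-h22-copy}: for $\bar{t}_i'$ close to $\bar{t}_i$, show $t_\lambda^{k_i'} > t_\lambda^{k_i} - \epsilon$ using that $\tilde{k}_i(t_\lambda^{k_i} - \epsilon;\bar{t}_i) < 0$, and show $t_\lambda^{k_i'} < t_\lambda^{k_i} + \epsilon$ by extending $\tilde{k}_i(\cdot;\bar{t}_i)$ to the left of $t_\lambda^{k_i}$ via local Lipschitz continuity of the dual Riccati and then invoking continuous dependence.

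The main obstacle is the continuity claim, since one must transfer information between the primal and dual Riccati equations at a point where the primal solution is singular and keep track of the sign of the dual just past its second zero. The monotonicity part is routine once the positivity result Lemma \ref{const-term-posit-equation-posit} and the comparison Lemma \ref{comparison theorem} are in place.
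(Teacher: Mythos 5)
Your monotonicity argument matches the paper's: positivity from Lemma \ref{const-term-posit-equation-posit}, then Lemma \ref{comparison theorem} on $(-\infty,\bar t_2]$. However, when you write ``$k_1(\bar t_2;\lambda)>0$'' you are implicitly assuming $\bar t_2>t_\lambda^{k_1}$, i.e.\ that $k_1$ has not already blown up by time $\bar t_2$. The paper splits into two cases; in the complementary case $\bar t_2\le t_\lambda^{k_1}$ the comparison is unavailable but the conclusion is immediate, since $t_\lambda^{k_2}<\bar t_2\le t_\lambda^{k_1}$ always. That one-line case should be added.

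For continuity, the paper's proof is a single clause (``from local Lipschitz condition''), so your instinct to flesh it out by mirroring Lemma \ref{lemma-k-function-h22-copy} is sound --- continuity of a \emph{blow-up time} is not a literal consequence of local Lipschitz continuity of the field. But your concrete set-up is wrong. You define $\tilde k_i(\cdot;\bar t_i)$ as the dual Riccati solution with $\tilde k_i(\bar t_i;\bar t_i)=0$ and assert that ``by the Legendre correspondence'' it vanishes again at $t_\lambda^{k_i}$. Under the Legendre correspondence used throughout the paper, $k_i(\bar t_i)=0$ corresponds to $\tilde k_i(\bar t_i)=\infty$, \emph{not} to $\tilde k_i(\bar t_i)=0$. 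The solution you pick out (with $\tilde k_i(\bar t_i)=0$) is a different solution curve: it is negative just to the left of $\bar t_i$ (since $\frac{\mathrm{d}\tilde k_i}{\mathrm{d}t}\big|_{\bar t_i}=H_{22}-H_{33}H_{13}^2-\lambda h_{22}>0$), and there is no reason for it to return to $0$ at $t_\lambda^{k_i}$. The object you want is $\tilde k_i=1/k_i$, which is positive on $(t_\lambda^{k_i},\bar t_i)$, diverges as $t\to\bar t_i^-$, and vanishes once --- simply, by the same derivative computation --- at $t_\lambda^{k_i}$. To work with it near $t_\lambda^{k_i}$ one must, exactly as in the paper's proof of Lemma \ref{lemma-k-function-h22-copy}, initialize \eqref{general-dual-riccati-function} at an intermediate time $\bar t_1\in(t_\lambda^{k_i},\bar t_i)$ with terminal value $1/k_i(\bar t_1;\lambda)$, which in turn depends continuously on $\bar t_i$ through $k_i$; then your two-sided bracketing via the simple zero goes through. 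As written, the step ``$\tilde k_i$ vanishes again at $t_\lambda^{k_i}$'' is a genuine error, though the fix is a change of initialization rather than of strategy.
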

\begin{proof}
By (\ref{moncond-H11H22H33}), there is a $\beta>0$, such that $H_{11}>\beta>0$. Then by  Lemma \ref{const-term-posit-equation-posit}, $k_i(t;\lambda)\ge0,\  t\le \bar{t}_i$.
If $\bar{t}_2\le t_\lambda^{k_1}$, then obviously $t_\lambda^{k_2}<\bar{t}_2\le t_\lambda^{k_1}$.
If $\bar{t}_2> t_\lambda^{k_1}$, then $k_2(\bar{t}_2;\lambda)=0<k_1(\bar{t}_2;\lambda)<\infty$.
By Lemma \ref{comparison theorem}, for $t\le\bar{t}_2$,
we have
$$0\le k_2(t;\lambda)\le k_1(t;\lambda),$$
whence $t_\lambda^{k_2}\le t_\lambda^{k_1}$.
That $t_\lambda^{k_i}$ is continuous dependent on $\bar{t}_i$ is from local Lipschitz condition of (\ref{riccati-func-mon-terminal-h22}).
\end{proof}
\begin{lem} \label{lemma-riccati-dual-func-mon-terminal-h22}
For $0\le\bar{t}_2<\bar{t}_1\le T$
and $\lambda\ge\lambda_b$,  assume that $k_i(\cdot;\lambda),\ i=1,2$ is separately the solution to (\ref{riccati-dual-func-mon-terminal-h22}):
\begin{equation}  \label{riccati-dual-func-mon-terminal-h22}
\left\{
\begin{aligned}
&-\frac{\mathrm{d}\tilde{k}_i}{\mathrm{d}t}
=-(2H_{21}+H_{13}^2)\tilde{k}_i-H_{11}\tilde{k}_i^2
-(H_{22}-H_{33}H_{13}^2-\lambda h_{22}), \indent  t\le \bar{t}_i,  \\
&\tilde{k}_i(\bar{t}_i)=0.
\end{aligned}
\right.
\end{equation}
Then $t_\lambda^{k_2}\le t_\lambda^{k_1}$ (if finite).
Besides, $t_\lambda^{\tilde{k}_i}$ is continuous dependent on $\bar{t}_i,\ i=1,2$.
\end{lem}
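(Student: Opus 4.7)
The plan is to mirror the argument for Lemma \ref{lemma-riccati-func-mon-terminal-h22}, with signs reversed throughout, since here $\tilde{k}_i$ plays the role of a negative dual variable rather than a positive primal one; the same machinery (Lemma \ref{const-term-posit-equation-posit} for sign-definiteness, Lemma \ref{comparison theorem} for ordering) then carries the argument through.

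First I will exploit the standing hypothesis $\lambda\ge\lambda_b$. By \eqref{lambda-b-h22-uniform-posit-quadratic-posit}, the ``constant'' term $-(H_{22}(t)-H_{33}(t)H_{13}^2(t)-\lambda h_{22}(t))$ in \eqref{riccati-dual-func-mon-terminal-h22} is non-positive on $[0,T]$, and for $\lambda>\lambda_b$ it is bounded above by some $-c<0$ uniformly in $t$. Applying the ``resp.'' part of Lemma \ref{const-term-posit-equation-posit} then yields $\tilde{k}_i(t;\lambda)<0$ for $t<\bar{t}_i$; the borderline case $\lambda=\lambda_b$ is handled by continuous dependence of $\tilde{k}_i$ on $\lambda$, or equivalently by taking limits from $\lambda>\lambda_b$.

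Next I will split into cases according to the position of $\bar{t}_2$ relative to $t_\lambda^{\tilde{k}_1}$. If $\bar{t}_2\le t_\lambda^{\tilde{k}_1}$, then by definition $t_\lambda^{\tilde{k}_2}<\bar{t}_2\le t_\lambda^{\tilde{k}_1}$ and the claim is immediate. Otherwise $\bar{t}_2>t_\lambda^{\tilde{k}_1}$, so $\tilde{k}_1(\bar{t}_2;\lambda)$ is a well-defined finite number, strictly negative by the previous step, while $\tilde{k}_2(\bar{t}_2;\lambda)=0$. Regarding the two equations in \eqref{riccati-dual-func-mon-terminal-h22} on the common interval $(t_\lambda^{\tilde{k}_1},\bar{t}_2]$ as instances of \eqref{equation-compare-riccati-ori}, Lemma \ref{comparison theorem} delivers $\tilde{k}_2(t;\lambda)\ge\tilde{k}_1(t;\lambda)$ wherever both are defined. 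Consequently $\tilde{k}_2$ cannot diverge to $-\infty$ at any time strictly to the right of $t_\lambda^{\tilde{k}_1}$, forcing $t_\lambda^{\tilde{k}_2}\le t_\lambda^{\tilde{k}_1}$ whenever both are finite.

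For the continuous dependence of $t_\lambda^{\tilde{k}_i}$ on $\bar{t}_i$, I will appeal to standard ODE continuation theory: the right-hand side of \eqref{riccati-dual-func-mon-terminal-h22} is a polynomial in $\tilde{k}$ with continuous coefficients, hence locally Lipschitz uniformly in $t$, so solutions depend continuously on terminal data; combined with the uniform strict negativity of $-\mathrm{d}\tilde{k}/\mathrm{d}t$ near the blow-up --- the same device used in Lemma \ref{lemma-k-function-h22-copy} --- this transfers to continuity of the blow-up time in the terminal point. The main obstacle I anticipate is mostly bookkeeping: one has to ensure the comparison is applied on an interval where both $\tilde{k}_1$ and $\tilde{k}_2$ remain finite (which is precisely what the case distinction guarantees), and one has to handle the degenerate endpoint $\lambda=\lambda_b$, where Lemma \ref{const-term-posit-equation-posit} as stated needs strict negativity of its constant term and must therefore be recovered via a limiting argument.
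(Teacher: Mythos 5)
Your proof is correct and follows the same route the paper intends: the paper explicitly omits this proof, stating only that it is ``similar to that of Lemma \ref{lemma-riccati-func-mon-terminal-h22}'', and your argument is indeed that proof with signs reversed (sign-definiteness of $\tilde{k}_i$ via the ``resp.'' clause of Lemma \ref{const-term-posit-equation-posit}, a case split on whether $\bar{t}_2\le t_\lambda^{\tilde{k}_1}$, the comparison Lemma \ref{comparison theorem} applied to identical equations with ordered terminal data $0\ge\tilde{k}_1(\bar{t}_2;\lambda)$, and the sandwich $\tilde{k}_1\le\tilde{k}_2\le0$ forcing $t_\lambda^{\tilde{k}_2}\le t_\lambda^{\tilde{k}_1}$). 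You are in fact more careful than the paper at the endpoint $\lambda=\lambda_b$, where \eqref{lambda-b-h22-uniform-posit-quadratic-posit} can degenerate to non-strict positivity so Lemma \ref{const-term-posit-equation-posit} as stated does not directly apply; your limiting argument fills that small gap. One small terminological slip: in invoking the device of Lemma \ref{lemma-k-function-h22-copy} for continuity of the blow-up time, the relevant non-degeneracy near $t_\lambda^{\tilde{k}_i}$ is the strict \emph{positivity} of $-\mathrm{d}k/\mathrm{d}t=H_{11}\ge\beta>0$ for the Legendre dual $k$ as it crosses zero (as in the proof of Lemma \ref{lemma-k-tilde-function-h22-cop}), not negativity of $-\mathrm{d}\tilde{k}/\mathrm{d}t$; the conclusion is unaffected.
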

By \eqref{lambda-b-h22-uniform-posit-quadratic-posit}, when $\lambda\ge\lambda_b$,
$H_{22}(t)-H_{33}(t)H_{13}^2(t)-\lambda h_{22}(t)>0,\ t\in[0,T]$.
The proof of Lemma \ref{lemma-riccati-dual-func-mon-terminal-h22} is similar to that of Lemma  \ref{lemma-riccati-func-mon-terminal-h22} and is omitted.

\begin{thm}  \label{general-lambda-exist}
Under Assumption \ref{assumption-1d-general-perturbation},
there exists $\{\lambda_m\}_{m=1}^\infty \subset (\lambda_b, +\infty)$, all those eigenvalues  of
problem (\ref{generel-function-eigen-problem}) contained in $(\lambda_b, +\infty)$, satisfying $\lambda_m\rightarrow+\infty$ as \ $m\rightarrow +\infty$.
Moreover, the eigenfunction space corresponding to each $\lambda_m$ is of  $1$ dimension.
\end{thm}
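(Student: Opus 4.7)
The plan is to characterize the eigenvalues in $(\lambda_b,+\infty)$ through an iterated blow-up scheme alternating between the primal Riccati equation \eqref{general-riccati-function-case} and its Legendre dual \eqref{general-dual-riccati-function}, and then to apply an intermediate value argument using the monotonicity and continuity results of Lemmas \ref{lemma-k-function-h22}--\ref{lemma-riccati-dual-func-mon-terminal-h22}.

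First I would set $\bar{t}_0(\lambda):=T$ and define recursively, for $\lambda>\lambda_b$,
$$\bar{t}_{2j+1}(\lambda):=t^{k}_{\lambda,\bar{t}_{2j}(\lambda)},\qquad \bar{t}_{2j+2}(\lambda):=t^{\tilde{k}}_{\lambda,\bar{t}_{2j+1}(\lambda)},$$
whenever the previous term is strictly positive. Combining Lemma \ref{decouple-lemma} with the Legendre identity $k=1/\tilde{k}$ (valid away from zeros and blow-ups), every candidate nontrivial solution of \eqref{generel-function-eigen-problem} is obtained by splicing explicit solutions across the partition $T=\bar{t}_0>\bar{t}_1>\cdots$. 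If $k$ with $k(T)=0$ exists on all of $[0,T]$, the linearity of \eqref{forward-sde-decouple-lemma-h22-k} together with $x(0)=0$ forces $x\equiv 0$, so at least one blow-up is needed; and if the leftmost piece were a $\tilde{k}$-piece (i.e.\ $\bar{t}_{2m}(\lambda)=0$ for some $m$), the coefficients of the forward SDE remain regular near $t=0$ and $x(0)=0$ again yields $x\equiv 0$. Hence $\lambda\in(\lambda_b,+\infty)$ is an eigenvalue if and only if $\bar{t}_{2m-1}(\lambda)=0$ for some $m\geq 1$.

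Next, by induction on $j$, I would show that $\lambda\mapsto\bar{t}_j(\lambda)$ is continuous and strictly increasing on the open set $\{\lambda>\lambda_b:\bar{t}_j(\lambda)>0\}$, with $\lim_{\lambda\to+\infty}\bar{t}_j(\lambda)=T$. The inductive step composes two monotone-increasing dependencies of the next blow-up time: on $\lambda$ at fixed terminal (Lemmas \ref{lemma-k-function-h22}, \ref{lemma-k-function-h22-copy}, \ref{lemma-k-tilde-function-h22}, \ref{lemma-k-tilde-function-h22-cop}), and on the terminal time at fixed $\lambda$ (Lemmas \ref{lemma-riccati-func-mon-terminal-h22}, \ref{lemma-riccati-dual-func-mon-terminal-h22}); both effects push $\bar{t}_{j+1}(\lambda)$ upward, while continuity is preserved by the composition.

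For each $m\geq 1$, the map $\lambda\mapsto\bar{t}_{2m-1}(\lambda)$ is then continuous and strictly increasing with limit $T$ at $+\infty$, and attains values $\leq 0$ just above the threshold where it first becomes defined, so the intermediate value theorem yields a unique $\lambda_m\in(\lambda_b,+\infty)$ with $\bar{t}_{2m-1}(\lambda_m)=0$; comparison across $m$ gives $\lambda_1<\lambda_2<\cdots$. If $\{\lambda_m\}$ were bounded above by some $\lambda^*<+\infty$, the continuous coefficients in Assumption \ref{assumption-1d-general-perturbation} would provide a uniform positive lower bound on each piece width at $\lambda^*$, allowing only finitely many pieces in $[0,T]$ and contradicting $\bar{t}_{2m-1}(\lambda_m)\downarrow 0$; hence $\lambda_m\to+\infty$. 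The $1$-dimensionality of each eigenspace then follows from Lemma \ref{decouple-lemma}: on each subinterval the decoupling $y=kx$ or $x=\tilde{k}y$ leaves exactly one scalar degree of freedom, propagated globally through the matching at the blow-ups. The hardest step is the induction above: because the terminal time of the $(j+1)$-th Riccati piece itself depends on $\lambda$, rigorously establishing continuity and strict monotonicity of the iterated blow-up times requires a careful composition of primal and dual monotonicity statements through the alternation of equations, a complication absent from the constant-coefficient case of \cite{peng}, where time-invariance collapses the iteration to a single width parameter.
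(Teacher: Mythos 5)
Your proposal is correct and follows essentially the same route as the paper's proof: you iterate the blow-up times of the primal Riccati equation \eqref{general-riccati-function-case} and its Legendre dual \eqref{general-dual-riccati-function}, establish continuity, strict monotonicity in $\lambda$, and convergence to $T$ of each $\bar t_j(\lambda)$ from Lemmas \ref{lemma-k-function-h22}--\ref{lemma-riccati-dual-func-mon-terminal-h22}, locate the eigenvalues by the intermediate value theorem at the $\lambda$'s where an odd-indexed blow-up time reaches $0$, and then build the eigenfunctions by splicing the decoupled solutions across the resulting partition, with one-dimensionality following from Lemma \ref{decouple-lemma}. The only small slip is the parenthetical identification of ``leftmost piece is a $\tilde k$-piece'' with $\bar t_{2m}(\lambda)=0$ — the former actually corresponds to $\bar t_{2m}(\lambda)<0\le\bar t_{2m-1}(\lambda)$, while $\bar t_{2m}(\lambda)=0$ is the degenerate $\tilde k$-blow-down at $t=0$ — but both non-eigenvalue configurations are eliminated by the same $x(0)=0$ argument applied to whichever decoupling ($y=kx$ or $x=\tilde k y$) has bounded coefficients near $t=0$, so the characterization of the eigenvalues and the remainder of the argument are unaffected.
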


\begin{proof}
At first,  consider the Riccati equation \eqref{general-riccati-function-case} and take $\bar{t}=T$:
\begin{equation} \label{general-riccati-function-case-theo-1}
\left\{
\begin{aligned}
&-\frac {\mathrm{d}k}{\mathrm{d}t}=\left(2H_{21}+H_{13}^2\right)k
+H_{11}+\left(H_{22}-H_{33}H_{13}^2-\lambda h_{22}\right)k^2, \indent  t\le T,  \\
&k(T)=0,
\end{aligned}
\right.
\end{equation}
Denote by $t_1(\lambda)(<T)$ the blow-up time of solution $k(\cdot;\lambda)$ to \eqref{general-riccati-function-case-theo-1}.
Then by Lemma \ref{lemma-k-function-h22} and Lemma \ref{lemma-k-function-h22-copy},
\begin{equation*}
t_1(\cdot): \left(\lambda_0(T,k)\vee\lambda_b,+\infty\right) \rightarrow \left(\lim_{\lambda'\searrow\{\lambda_0(T,k)\vee\lambda_b\}}t^k_{\lambda',T}\ ,\ T\right)
\end{equation*}
is a strictly increasing and continuous bijective mapping of $\lambda$.

Then consider the dual Riccati equation \eqref{general-dual-riccati-function} and take $\bar{t}=t_1(\lambda)$:
\begin{equation} \label{general-dual-riccati-function-theo-2}
\left\{
\begin{aligned}
&-\frac{\mathrm{d}\tilde{k}}{\mathrm{d}t}
=-\left(2H_{21}+H_{13}^2\right)\tilde{k}-H_{11}\tilde{k}^2
-\left(H_{22}-H_{33}H_{13}^2-\lambda h_{22}\right),\indent  t\le t_1(\lambda),      \\
&\tilde{k}(t_1(\lambda))=0.
\end{aligned}
\right.
\end{equation}
Let $t_2(\lambda)$ be the blow-up time of the solution $\tilde{k}(\cdot;\lambda)$ to \eqref{general-dual-riccati-function-theo-2}.
By Lemmas  \ref{lemma-k-function-h22} to  \ref{lemma-riccati-dual-func-mon-terminal-h22},
\begin{eqnarray*}
\lim_{\lambda\rightarrow +\infty} t_2(\lambda) = T,
\end{eqnarray*}
and $t_2(\cdot)$ is a strictly increasing and continuous bijective mapping  once it is finite for sufficiently large $\lambda$.

Then consider the Riccati equation \eqref{general-riccati-function-case} and take $\bar{t}=t_2(\lambda)$:
\begin{equation} \label{general-riccati-function-case-theo-3}
\left\{
\begin{aligned}
&-\frac {\mathrm{d}k}{\mathrm{d}t}=\left(2H_{21}+H_{13}^2\right)k
+H_{11}+\left(H_{22}-H_{33}H_{13}^2-\lambda h_{22}\right)k^2, \indent t\le t_2(\lambda),  \\
&k(t_2(\lambda))=0,
\end{aligned}
\right.
\end{equation}
Let $t_3(\lambda)(<t_2(\lambda))$ be the blow-up time of solution $k(\cdot;\lambda)$ to \eqref{general-riccati-function-case-theo-3}.
Then by Lemmas \ref{lemma-k-function-h22} to \ref{lemma-riccati-dual-func-mon-terminal-h22},
\begin{eqnarray*}
\lim_{\lambda\rightarrow +\infty} t_3(\lambda) = T,
\end{eqnarray*}
and $t_3(\cdot)$ is a  strictly increasing continuous bijective mapping once it is finite for sufficiently large $\lambda$.

By induction, we can define $t_m(\cdot),\ m=1,2,3,\cdots$ as above, such that for sufficiently large $\lambda$,
$$\cdots<t_3(\lambda)<t_2(\lambda)<t_1(\lambda)<t_0(\lambda)\triangleq T.$$
Since for any fixed $\lambda'>\lambda_b$,
$\inf\left\{ \bar{t}-t_{\lambda',\bar{t}}^k \Big|\bar{t}\in[0,T]\right\}\wedge \inf\left\{ \bar{t}-t_{\lambda',\bar{t}}^{\tilde{k}} \Big|\bar{t}\in[0,T]\right\} >0$,
then there is $n\in\mathbb{N}_+\cup\{0\}$ and $\lambda\ge\lambda_b$, such that
\begin{equation*}
\begin{aligned}
T-t_{1+2n}(\lambda) =& \sum_{i=1}^{1+2n} \left[ t_{i-1}(\lambda)-t_{i}(\lambda) \right]\\
=& \sum_{i=0}^{n}\left[t_{2i}(\lambda)- t^k_{\lambda,t_{2i}(\lambda)} \right]
    +\sum_{i=1}^{n} \left[ t_{2i-1}(\lambda)-t^{\tilde{k}}_{\lambda,t_{2i-1}(\lambda)} \right] \\
>&T.
\end{aligned}
\end{equation*}
It deduces that for the above $n$ and $\lambda$,
\begin{equation*}
t_{1+2n}(\lambda)<0.
\end{equation*}
Further, because $t_{1+2n}(\cdot)$ is a strictly increasing continuous bijective mapping and
$$\lim_{\lambda\nearrow+\infty} t_{1+2n}(\lambda)=T,$$
there is a unique minimal $\lambda_1>\lambda_b$ and certain unique  minimal $2n\in\mathbb{N}_+\cup\{0\}$,
such that
\begin{equation}\label{the-n-exist-t2npl1=0}
t_{1+2n}(\lambda_1)=0.
\end{equation}
Moreover, by Lemmata \ref{lemma-k-function-h22}-\ref{lemma-riccati-dual-func-mon-terminal-h22} again, those functions
\begin{equation}\label{those-function-tlambda}
t_{2m+1+2n}(\lambda),\quad   m=0,1,2,\cdots
\end{equation}
are also strictly increasing continuous bijective mapping and
\begin{equation*}
\lim_{\lambda\nearrow+\infty} t_{2m+1+2n}(\lambda)=T, \quad   m=0,1,2,\cdots.
\end{equation*}
Then there is a unique $\lambda_{m+1}\in(\lambda_m,+\infty)$, such that $t_{2m+1+2n}(\lambda_{m+1})=0,\ m=0,1,2,\cdots$.
This derives a series of $\lambda_m,\ m=0,1,2,\cdots$, satisfying $\lambda_b<\lambda_1<\lambda_2<\lambda_3<\cdots$ and $t_{2m+1+2n}(\lambda_{m+1})=0$.

We claim that this series of $\lambda_m,\ m=1,2,\cdots$, are exactly all the eigenvalues of problem (\ref{generel-function-eigen-problem}) which are contained in $(\lambda_b, +\infty)$.

To prove the claim, for $\lambda_m$, $m=1,2,3,\cdots$, we construct the associated  eigenfunctions.
By the above procedure,
\begin{equation*}
0=t_{2m-1+2n}(\lambda_m)<t_{2m-2+2n}(\lambda_{m})<\cdots<t_2(\lambda_m)<t_1(\lambda_m)<T.
\end{equation*}
Divide the interval $[0,T]$ into $2m+2n$ parts:
\begin{eqnarray*}
I_1&=&\left[0,\frac{t_{2m-2+2n}(\lambda_{m})}{2}\right],\\
I_2&=&\left[\frac{t_{2m-2+2n}(\lambda_{m})}{2}, \frac{t_{2m-2+2n}(\lambda_{m})+t_{2m-3+2n}(\lambda_{m})}{2}\right],\\
 &\vdots&\\
I_{2m-1+2n}&=&\left[\frac{t_2(\lambda_m)+t_1(\lambda_m)}{2}, \frac{t_1(\lambda_m)+T}{2}\right],\\
I_{2m+2n}&=&\left[\frac{t_1(\lambda_m)+T}{2},T\right].
\end{eqnarray*}
Recall that by Legendre transformation, $k(\cdot;\lambda)\tilde{k}(\cdot;\lambda)=1$ whenever both of them are nonzero.
By the above procedure, $\tilde{k}(\cdot;\lambda_m)$ exists on $I_1\cup I_3\cup\cdots \cup I_{2m-1+2n}$,
while $k(\cdot;\lambda_m)$ exists on $I_2\cup I_4\cup\cdots \cup I_{2m+2n}$, and $\tilde{k}(0;\lambda_m)=k(T;\lambda_m)=0$.
Next, we will use Lemma \ref{decouple-lemma} to get the eigenfunctions.
Take $\tilde{x}_0\neq0$ and solve (\ref{forward-sde-decouple-lemma-1-dim-h22}) on $I_1$ with initial value condition $\tilde{x}(0)=\tilde{x}_0$:
\begin{equation} \label{forward-sde-decouple-lemma-1-dim-h22-theo-1}
\left\{
\begin{aligned}
&\mathrm{d}\tilde{x}_t=\left[\widetilde{H}_{21}+ \widetilde{H}_{22}\tilde{k}+ \widetilde{H}_{23}\tilde{m}\right]\tilde{x}_t\mathrm{d}t     + \left[ \widetilde{H}_{31}+ \widetilde{H}_{32}\tilde{k}+ \widetilde{H}_{33}\tilde{m}\right]\tilde{x}_t\mathrm{d}B_t, \indent   t\in I_1,   \\
&\tilde{x}(0)=\tilde{x}_0.
\end{aligned}
\right.
\end{equation}
By (\ref{tilde-H-h22-1d-simp}),
$$\widetilde{H}_{23}=-\widetilde{H}_{33}\widetilde{H}_{13}\indent \text{and}\indent \widetilde{H}_{13}=H_{13}.$$
Let
$$c=\frac{1}{2(\max\{|H_{13}(t)|,\ t\in [0,T]\}+1)^2}.$$
Then
$$\left(1-k(t)H_{33}(t)\right)^2 \ge  c H_{13}^2(t)\left(1-k(t)H_{33}(t)\right)^2$$
and
$$\left(1-\tilde{k}(t)\widetilde{H}_{33}(t)\right)^2 \ge  c \widetilde{H}_{13}^2(t)\left(1-\tilde{k}(t)\widetilde{H}_{33}(t)\right)^2.$$
It follows that condition (\ref{weak-unique-condition}) holds true.
By Lemma \ref{decouple-lemma}, $\left(\tilde{x},\tilde{y},\tilde{z}\right)$ uniquely exist on $I_1$ and  $\tilde{y}=\tilde{k}\tilde{x}$.
In particular, we get $\tilde{y}\left(\frac{t_{2m-2+2n}(\lambda_{m})}{2}\right)$
and $x(0)=\tilde{y}(0)=\tilde{k}(0)\tilde{x}(0)=0$.

Similarly,  we can solve (\ref{forward-sde-decouple-lemma-h22-k}) on $I_2$ with initial value condition: %$$x\left(\frac{t_{2m-2}(\lambda_{m})}{2}\right)=\tilde{y}\left(\frac{t_{2m-2}(\lambda_{m})}{2}\right)
%=\tilde{k}\left(\frac{t_{2m-2}(\lambda_{m})}{2}\right)\tilde{x}\left(\frac{t_{2m-2}(\lambda_{m})}{2}\right):$$
\begin{equation*} %\label{forward-sde-decouple-lemma-h22-k-theo-2}
\left\{
\begin{aligned}
&\mathrm{d}x_t=\left[H_{21}+ \left(H_{22}-\lambda h_{22}\right)k+ H_{23}m\right]x_t\mathrm{d}t   + \left[ H_{31}+ H_{32}k+ H_{33}m\right]x_t\mathrm{d}B_t,    \ \ \  t\in I_2,   \\
&x\left(\frac{t_{2m-2+2n}(\lambda_{m})}{2}\right)=\tilde{y}\left(\frac{t_{2m-2+2n}(\lambda_{m})}{2}\right).  \end{aligned}
\right.
\end{equation*}
By Lemma \ref{decouple-lemma}, $(x,y,z)$ uniquely exist on $I_2$  and $y=kx$.

In particular, we get $y\left(\frac{t_{2m-2+2n}(\lambda_{m})+t_{2m-3+2n}(\lambda_{m})}{2}\right)$.
Then we consider (\ref{forward-sde-decouple-lemma-1-dim-h22}) on $I_3$ with initial value condition:
\begin{equation*} %\label{forward-sde-decouple-lemma-1-dim-h22-theo-3}
\left\{
\begin{aligned}
&\mathrm{d}\tilde{x}_t=\left[\widetilde{H}_{21}+ \widetilde{H}_{22}\tilde{k}+ \widetilde{H}_{23}\tilde{m}\right]\tilde{x}_t\mathrm{d}t    + \left[ \widetilde{H}_{31}+ \widetilde{H}_{32}\tilde{k}+ \widetilde{H}_{33}\tilde{m}\right]\tilde{x}_t\mathrm{d}B_t, \indent  t\in I_3,   \\
&\tilde{x}\left(\frac{t_{2m-2+2n}(\lambda_{m})+t_{2m-3+2n}(\lambda_{m})}{2}\right)
=y\left(\frac{t_{2m-2+2n}(\lambda_{m})+t_{2m-3+2n}(\lambda_{m})}{2}\right).
\end{aligned}
\right.
\end{equation*}
By Lemma \ref{decouple-lemma} again, $\left(\tilde{x},\tilde{y},\tilde{z}\right)$ uniquely exist on $I_3$ and  $\tilde{y}=\tilde{k}\tilde{x}$.
By induction,  we can solve (\ref{forward-sde-decouple-lemma-h22-k}) on $I_{2m+2n}$ with initial value condition:
\begin{equation*} %\label{forward-sde-decouple-lemma-h22-k-theo-2m}
\left\{
\begin{aligned}
&\mathrm{d}x_t=\left[H_{21}+ \left(H_{22}-\lambda h_{22}\right)k+ H_{23}m\right]x_t\mathrm{d}t   \\
&\indent\ \ \ + \left[ H_{31}+ H_{32}k+ H_{33}m\right]x_t\mathrm{d}B_t,    \indent   t\in I_{2m+2n},   \\
&x\left(\frac{t_1(\lambda_m)+T}{2}\right)=\tilde{y}\left(\frac{t_1(\lambda_m)+T}{2}\right),
\end{aligned}
\right.
\end{equation*}
where $\tilde{y}\left(\frac{t_1(\lambda_m)+T}{2}\right)$ is obtained from the previous step on $I_{2m-1+2n}$.
By Lemma \ref{decouple-lemma}, $(x,y,z)$ uniquely exists on $I_{2m+2n}$  and $y=kx$.
In particular, $y(T)=k(T)x(T)=0$.

Up to now, we get the unique $(x,y,z)$ on $I_2\cup I_4\cup\cdots \cup I_{2m+2n}$, and the unique $\left(\tilde{x},\tilde{y},\tilde{z}\right)$ on $I_1\cup I_3\cup\cdots \cup I_{2m-1+2n}$.
By Legendre dual transformation and Lemma \ref{decouple-lemma}, the triple $(x_t,y_t,z_t), t\in [0,T]$ defined by
\begin{equation}\label{xyz-form-dual}
 {(x_t,y_t,z_t)=}
\left\{
\begin{aligned}
  \left(\widetilde{y},\widetilde{x},\widetilde{H}_{31}\widetilde{x}+ \widetilde{H}_{32}\widetilde{y}+ \widetilde{H}_{33}\widetilde{z}\right)(t), &\indent   t\in I_1, I_3,\cdots, I_{2m-1+2n},  \\
  (x,y,z)(t), &\indent   t\in I_2,  I_4,\cdots, I_{2m+2n},
\end{aligned}
\right.
\end{equation}
is exactly the nontrivial solution to (\ref{generel-function-eigen-problem}) corresponding to eigenvalue $\lambda_m$.

Next, we will show that the space of eigenfunctions associated with each $\lambda_m$ are $1$-dimensional. By Lemma \ref{decouple-lemma}, every non-trivial solution $(x,y,z)$ to (\ref{generel-function-eigen-problem}) satisfies $\tilde{k}(0)y_0=0$.
In \eqref{forward-sde-decouple-lemma-1-dim-h22-theo-1}, taking
$$\tilde{x}'_0=\mu\tilde{x}_0=\mu y_0,\indent  \mu\in\mathbb{R}\verb"\"\{0\},$$
we get the unique solution $(x',y',z')$  by the above procedure.
On the other hand, $(\mu x,\mu y, \mu z)$ is a nontrivial solution to (\ref{generel-function-eigen-problem}) satisfying $\tilde{k}(0)\mu y_0=0$.
Moreover, by the uniqueness in Lemma \ref{decouple-lemma},
$$(x',y',z')=(\mu x,\mu y, \mu z).$$
That is to say,
the dimension of eigenfunction space corresponding to each $\lambda_m$ is $1$.

At last, we interpret why there are not other eigenvalues.
For any $\lambda>\lambda_b$, $\lambda\not=\lambda_m,\forall m\ge1$, by the above procedure, $(k,m)$ and $(\tilde{k},\tilde{m})$ exist in turn on the whole $[0,T]$.
Then by Lemma \ref{decouple-lemma}, corresponding to $\lambda>\lambda_b$, $\lambda\not=\lambda_m,\forall m\ge1$, the eigenvalue problem (\ref{generel-function-eigen-problem}) has unique  solution.
On the other hand,
for any $\lambda>\lambda_b$, $\lambda\not=\lambda_m,\forall m\ge1$, by the above procedure, $\tilde{k}(0;\lambda)\neq0$.
Then we have $y(0)=0$ from the following equality:
$$x(0)=\tilde{y}(0)=\tilde{k}(0;\lambda)\tilde{x}(0)=\tilde{k}(0;\lambda)y(0) =0.$$
Then trivial solution is the unique one to (\ref{generel-function-eigen-problem}) corresponding to any $\lambda>\lambda_b$, $\lambda\not=\lambda_m,\forall m\ge1$.
\end{proof}

\begin{rem}
In Theorem \ref{general-lambda-exist}, we can say nothing about those eigenvalues in $(0,\lambda_b]$,
However, in Section \ref{examp-find-all-eigen-sec-ass}, under some proper additional conditions, we can discover all the eigenvalues of problem \eqref{generel-function-eigen-problem}.
\end{rem}
%\begin{rem}
%Similar to \cite[section 6.2]{peng}, by the procedure in the proof of Theorem \ref{general-lambda-exist}, the eigenfunctions of stochastic Hamiltonian system \eqref{generel-function-eigen-problem} have the property of  \emph{Stochastic Oscillations}. But unlike constant coefficients counterpart, in the time-dependent case, the time length of  statistic period of eigenfunctions may change.
%\end{rem}

\section{A sufficient condition to find out all the eigenvalues}\label{examp-find-all-eigen-sec-ass}
In Theorem \ref{general-lambda-exist}, under some proper conditions, all the eigenvalues of problem \eqref{generel-function-eigen-problem} located in $(\lambda_b, +\infty)$ are discovered.
On the other hand,  the example in Section \ref{exa-new-case-dual-to0} indicates that how subtle cases can be when the coefficients  are time-dependent and that it is a tough problem to find out all the eigenvalues.
However, in this section, we will show that under some sharper conditions, actually all the eigenvalues in $\mathbb{R}$ can also be discovered in time-dependent eigenvalue problem of stochastic Hamiltonian system with boundary conditions.

\begin{ass}\label{find-all-eigen-examp-ass-suf-intpr-ass}
Apart from Assumption \ref{assumption-1d-general-perturbation}, assume that
\begin{equation}\label{find-all-eigen-examp-ass-suf}
4\|H_{11}\|_{\infty} \left\|H_{22}-H_{33}H_{13}^2-\lambda_b h_{22}\right\|_{\infty}
\le \left\|2H_{21}+H_{13}^2\right\|^2_{\infty}<\frac{4}{T^2}.
\end{equation}
\end{ass}

After taking $\bar{t}=T$,  \eqref{general-riccati-function-case} becomes
\begin{equation} \label{general-riccati-function-case-examp-all-dis-compa-upperbound-equa}
\left\{
\begin{aligned}
&-\frac {\mathrm{d}k}{\mathrm{d}t}=\left(2H_{21}+H_{13}^2\right)k
+H_{11}+\left(H_{22}-H_{33}H_{13}^2-\lambda h_{22}\right)k^2, \indent t\le T,  \\
&k(T)=0,
\end{aligned}
\right.
\end{equation}
The following \eqref{general-riccati-function-case-examp-all-dis-ass1-compa-upperbound-equa} is also considered:
\begin{equation} \label{general-riccati-function-case-examp-all-dis-ass1-compa-upperbound-equa}
\left\{
\begin{aligned}
&-\frac {\mathrm{d}k_1}{\mathrm{d}t}=\left\|2H_{21}+H_{13}^2\right\|_{\infty}k_1
+\|H_{11}\|_{\infty}+\left\|H_{22}-H_{33}H_{13}^2-\lambda h_{22}\right\|_{\infty}k_1^2, \ \ t\le T,  \\
&k_1(T)=0, \indent \lambda\ge\lambda_b,
\end{aligned}
\right.
\end{equation}
From \eqref{moncond-H11H22H33}, $H_{11}(t)\ge \beta>0, t\in[0,T]$ and then $\|H_{11}\|_{\infty}>0$.
By Lemma \ref{const-term-posit-equation-posit}, $k(t;\lambda)\ge0,\ k_1(t;\lambda)\ge0,\ t\le T$.

By subtracting \eqref{general-riccati-function-case-examp-all-dis-ass1-compa-upperbound-equa} from \eqref{general-riccati-function-case-examp-all-dis-compa-upperbound-equa}, we have
\begin{equation*}
\left\{
\begin{aligned}
&-\frac {\mathrm{d}(k-k_1)}{\mathrm{d}t}=
\left(2H_{21}+H_{13}^2\right)(k-k_1)+\left(H_{22}-H_{33}H_{13}^2-\lambda h_{22}\right)(k+k_1)(k-k_1)\\
&\indent \indent\indent \indent\   +H_{11}-\|H_{11}\|_{\infty} +\left[\left(2H_{21}+H_{13}^2\right)-\left\|2H_{21}+H_{13}^2\right\|_{\infty}\right]k_1  \\
&\indent\indent \indent \indent\  +\left[ \left(H_{22}-H_{33}H_{13}^2-\lambda h_{22}\right)- \left\|H_{22}-H_{33}H_{13}^2-\lambda h_{22}\right\|_{\infty}\right]k_1^2, \indent t\le T,  \\
&(k-k_1)(T)=0.
\end{aligned}
\right.
\end{equation*}
Since for $t\le T$, $\lambda\ge\lambda_b$,
\begin{equation*}
\begin{aligned}
&H_{11}-\|H_{11}\|_{\infty} \le0,\\
&\left[\left(2H_{21}+H_{13}^2\right)-\left\|2H_{21}+H_{13}^2\right\|_{\infty}\right]k_1\le0,\\
&\left[ \left(H_{22}-H_{33}H_{13}^2-\lambda h_{22}\right)- \left\|H_{22}-H_{33}H_{13}^2-\lambda h_{22}\right\|_{\infty}\right]k_1^2\le0,
\end{aligned}
\end{equation*}
by Lemma \ref{elementary-compare},
\begin{equation*}
  0\le k(t;\lambda_b)\le k_1(t;\lambda_b) \le k_1(t;\lambda), \indent \lambda\ge\lambda_b,\ t\le T.
\end{equation*}
Then
\begin{equation}\label{find-all-eigen-examp-ass-suf-intpr-ass-lemm-first-amonglemma4}
  t_{\lambda_b,T}^k \le t_{\lambda_b,T}^{k_1} \le t_{\lambda,T}^{k_1}, \indent \lambda\ge\lambda_b.
\end{equation}
Moreover,
\begin{lem}\label{find-all-eigen-examp-ass-suf-intpr-ass-lemm-first}
Under Assumption \ref{find-all-eigen-examp-ass-suf-intpr-ass},
\begin{equation*}%\label{find-all-eigen-examp-ass-suf-intpr-ass-lemm-first-amonglemma1}
 \lim_{\lambda\searrow\lambda_b} t_{\lambda,T}^{k_1}<0,
\end{equation*}
hence
\begin{equation}\label{find-all-eigen-examp-ass-suf-intpr-ass-lemm-first-amonglemma2}
 %\lim_{\lambda\searrow \lambda_0(T,k) } t_{\lambda,T}^{k}\le
\lim_{\lambda\searrow \lambda_b } t_{\lambda,T}^{k}=
 \lim_{\lambda\searrow\left\{\lambda_b\vee\lambda_0(T,k)\right\}} t_{\lambda,T}^{k}<0.
\end{equation}
\end{lem}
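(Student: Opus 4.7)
The plan is to exploit that \eqref{general-riccati-function-case-examp-all-dis-ass1-compa-upperbound-equa} is a Riccati ODE with \emph{constant} coefficients, so its blow-up time admits a closed form. Abbreviate
\begin{equation*}
a:=\left\|2H_{21}+H_{13}^{2}\right\|_{\infty},\qquad b:=\|H_{11}\|_{\infty},\qquad c(\lambda):=\left\|H_{22}-H_{33}H_{13}^{2}-\lambda h_{22}\right\|_{\infty},
\end{equation*}
so Assumption \ref{find-all-eigen-examp-ass-suf-intpr-ass} reads $4bc(\lambda_b)\le a^{2}<4/T^{2}$. Reversing time via $\tau=T-t$ turns \eqref{general-riccati-function-case-examp-all-dis-ass1-compa-upperbound-equa} into the autonomous separable ODE $\frac{dk_{1}}{d\tau}=c(\lambda)k_{1}^{2}+ak_{1}+b$ with $k_{1}(0)=0$, whose blow-up time is $\tau^{*}(\lambda)=\int_{0}^{\infty}\frac{du}{c(\lambda)u^{2}+au+b}$; thus $t^{k_{1}}_{\lambda,T}=T-\tau^{*}(\lambda)$.

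I would then compute $\tau^{*}(\lambda_{b})$ via partial fractions. With $\Delta:=a^{2}-4bc(\lambda_{b})\ge 0$ from the first half of Assumption \ref{find-all-eigen-examp-ass-suf-intpr-ass}, the classical formula gives
\begin{equation*}
\tau^{*}(\lambda_{b})=\frac{1}{\sqrt{\Delta}}\ln\frac{a+\sqrt{\Delta}}{a-\sqrt{\Delta}}\ \ (\Delta>0),\qquad \tau^{*}(\lambda_{b})=\frac{2}{a}\ \ (\Delta=0),
\end{equation*}
the latter obtained either as the continuous limit of the former or by direct integration after completing the square. The elementary inequality $\ln\frac{a+s}{a-s}>\frac{2s}{a}$ for $s\in(0,a)$, verified by differentiating $f(s):=\ln\frac{a+s}{a-s}-\frac{2s}{a}$ and noting $f'(s)=\frac{2s^{2}}{a(a^{2}-s^{2})}>0$, yields $\tau^{*}(\lambda_{b})\ge 2/a$ in both cases. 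Combined with the strict bound $a<2/T$ in Assumption \ref{find-all-eigen-examp-ass-suf-intpr-ass}, this delivers $\tau^{*}(\lambda_{b})>T$, hence $t^{k_{1}}_{\lambda_{b},T}<0$ strictly.

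To upgrade this to the limit as $\lambda\searrow\lambda_{b}$, observe that $\lambda\mapsto c(\lambda)$ is continuous and nondecreasing on $[\lambda_{b},+\infty)$ (using $h_{22}<0$ together with \eqref{lambda-b-h22-uniform-posit-quadratic-posit}), so continuous dependence of the integral $\tau^{*}(\lambda)$ on the parameter $c(\lambda)$ yields $\tau^{*}(\lambda)\to\tau^{*}(\lambda_{b})$ as $\lambda\searrow\lambda_{b}$; this gives the first assertion. For the second, \eqref{find-all-eigen-examp-ass-suf-intpr-ass-lemm-first-amonglemma4} gives $t^{k}_{\lambda,T}\le t^{k_{1}}_{\lambda,T}$ for every $\lambda\ge\lambda_{b}$; moreover, whenever $\lambda>\lambda_{b}$ the quadratic coefficient $H_{22}-H_{33}H_{13}^{2}-\lambda h_{22}$ is uniformly positive on $[0,T]$ by \eqref{lambda-b-h22-uniform-posit-quadratic-posit}, which forces $k(\cdot;\lambda)$ of \eqref{general-riccati-function-case} to blow up in finite backward time, so $\lambda_{0}(T,k)\le\lambda_{b}$ and $\lambda_{b}\vee\lambda_{0}(T,k)=\lambda_{b}$. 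Passing to the limit in the comparison completes the proof of \eqref{find-all-eigen-examp-ass-suf-intpr-ass-lemm-first-amonglemma2}.

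The only real subtlety lies in the degenerate case $\Delta=0$, where the partial-fraction decomposition collapses and one must check directly, via the completed-square form, that $\tau^{*}$ retains the value $2/a$; the other genuine mini-obstacle is ensuring that the inequality $\tau^{*}(\lambda_{b})>T$ is \emph{strict}, which is rescued by the strict bound $a<2/T$ in Assumption \ref{find-all-eigen-examp-ass-suf-intpr-ass}. Everything else is routine separable-ODE calculus combined with the already-established comparison.
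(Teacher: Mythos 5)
Your proof is correct, but it travels a genuinely different road than the paper's. The paper introduces an auxiliary level $\lambda_{b1}\ge\lambda_b$ at which the discriminant $a^2-4bc(\lambda)$ vanishes exactly, invokes the closed-form blow-up-time expression from \cite[(2.8)]{JW} (the arctangent form valid when $4bc>a^2$, i.e.\ complex roots), and passes to the limit $\lambda\searrow\lambda_{b1}$ via L'Hospital to get $T-t^{k_1}_{\lambda_{b1},T}=2/a>T$; monotonicity of $t^{k_1}_{\lambda,T}$ in $\lambda$ then pushes the conclusion down to $\lambda_b$. You instead stay at $\lambda_b$, where $\Delta=a^2-4bc(\lambda_b)\ge0$ (real roots), compute the blow-up time directly as the improper integral $\int_0^\infty\frac{du}{c u^2+au+b}$ by separation of variables, and use the elementary inequality $\ln\frac{a+s}{a-s}>\frac{2s}{a}$ to show $\tau^*(\lambda_b)\ge 2/a>T$ in one stroke. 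Your route is self-contained (no appeal to \cite{JW}), avoids the auxiliary $\lambda_{b1}$, and handles the degenerate case $\Delta=0$ by a direct completed-square integration rather than by a one-sided limit. You are also more explicit than the paper about why $\lambda_b\vee\lambda_0(T,k)=\lambda_b$ (finite blow-up for all $\lambda>\lambda_b$ because the quadratic coefficient and $H_{11}$ are then uniformly positive), which is needed to justify the second equality in \eqref{find-all-eigen-examp-ass-suf-intpr-ass-lemm-first-amonglemma2} but is left implicit in the paper's write-up. The only small imprecision in your argument is terminological: for $\Delta>0$ the bound $\tau^*(\lambda_b)>2/a$ is in fact strict, so the weak ``$\ge 2/a$'' you state is not sharp, but since you then invoke the strict inequality $a<2/T$ the final conclusion $\tau^*(\lambda_b)>T$ is unaffected.
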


\begin{proof}
For $\lambda'_1\ge\lambda'_2\ge\lambda_b$,
$$\left\|H_{22}-H_{33}H_{13}^2-\lambda'_1 h_{22}\right\|_{\infty}
\ge \left\|H_{22}-H_{33}H_{13}^2-\lambda'_2 h_{22}\right\|_{\infty}.$$
Then by Lemma \ref{comparison theorem},
$$k_1(t;\lambda'_1)\ge k_1(t;\lambda'_2)\ge0,\indent t\le T, $$
and then
\begin{equation}\label{find-all-eigen-examp-ass-suf-intpr-ass-lemm-first-amonglemma3}
t_{\lambda'_1,T}^{k_1}\ge t_{\lambda'_2,T}^{k_1}.
\end{equation}

Under Assumption \ref{find-all-eigen-examp-ass-suf-intpr-ass},
$$4\|H_{11}\|_{\infty} \left\|H_{22}-H_{33}H_{13}^2-\lambda_b h_{22}\right\|_{\infty}-\left\|2H_{21}+H_{13}^2\right\|^2_{\infty}\le0.$$
Since for $\lambda\ge\lambda_b$, $\left\|H_{22}-H_{33}H_{13}^2-\lambda_b h_{22}\right\|_{\infty}$ is increasing in $\lambda$,
there is certain sufficiently large $\lambda_{b1}\ge\lambda_b$, such that
$$4\|H_{11}\|_{\infty} \left\|H_{22}-H_{33}H_{13}^2-\lambda_{b1} h_{22}\right\|_{\infty}-\left\|2H_{21}+H_{13}^2\right\|^2_{\infty}=0,$$
and for $\forall \lambda>\lambda_{b1}$,
\begin{equation*}
\begin{aligned}
&\ \ \ \ 4\|H_{11}\|_{\infty} \left\|H_{22}-H_{33}H_{13}^2-\lambda h_{22}\right\|_{\infty}-\left\|2H_{21}+H_{13}^2\right\|^2_{\infty}\\
&>4\|H_{11}\|_{\infty} \left\|H_{22}-H_{33}H_{13}^2-\lambda_{b1} h_{22}\right\|_{\infty}-\left\|2H_{21}+H_{13}^2\right\|^2_{\infty}=0.
\end{aligned}
\end{equation*}
By \cite[(2.8)]{JW},
L'Hospital Formula, and Assumption \ref{find-all-eigen-examp-ass-suf-intpr-ass},
\begin{equation*}
\begin{aligned}
  T-t_{\lambda_{b1},T}^{k_1}&= \lim_{\lambda\searrow\lambda_{b1}}
  \frac{\frac{\pi}{2}+\arctan{\frac{-\left\|2H_{21}+H_{13}^2\right\|_{\infty}}{\sqrt{4\|H_{11}\|_{\infty} \left\|H_{22}-H_{33}H_{13}^2-\lambda h_{22}\right\|_{\infty}-\left\|2H_{21}+H_{13}^2\right\|^2_{\infty}}}}}{\sqrt{\|H_{11}\|_{\infty} \left\|H_{22}-H_{33}H_{13}^2-\lambda h_{22}\right\|_{\infty}-\frac{\left\|2H_{21}+H_{13}^2\right\|^2_{\infty}}{4}}}  \\
  &=\frac{2}{\left\|2H_{21}+H_{13}^2\right\|_{\infty}}\\
  &>T,
  \end{aligned}
\end{equation*}
which implies that $t_{\lambda_{b1},T}^{k_1}<0$.
Then by \eqref{find-all-eigen-examp-ass-suf-intpr-ass-lemm-first-amonglemma3}, $t_{\lambda_{b},T}^{k_1}<0$.
Then by  \eqref{find-all-eigen-examp-ass-suf-intpr-ass-lemm-first-amonglemma4}, $t_{\lambda_{b},T}^{k}<0$.
\end{proof}

By \eqref{find-all-eigen-examp-ass-suf-intpr-ass-lemm-first-amonglemma2}, $k(\cdot;\lambda_b)$, hence $\left(k(\cdot;\lambda_b),m(\cdot;\lambda_b)\right)$, exists on the whole $[0,T]$.
Besides, since
\begin{equation*}
H_{22}(t)-H_{33}(t)H_{13}^2(t)-\lambda h_{22}(t)\le H_{22}(t)-H_{33}(t)H_{13}^2(t)-\lambda_b h_{22}(t), \quad   t\in[0,T], \quad \lambda \in [0,\lambda_b],
\end{equation*}
by Lemma \ref{comparison theorem},
\begin{equation*}
  0\le k(t;\lambda)\le k(t;\lambda_b), \indent \lambda \in [0,\lambda_b].
\end{equation*}
Then for $\forall\lambda \in [0,\lambda_b]$, $\left(k(\cdot;\lambda),m(\cdot;\lambda)\right)$ exist on the whole $[0,T]$.
Then by Lemma \ref{decouple-lemma},
there is none non-trivial solution to problem \eqref{generel-function-eigen-problem} corresponding to any $\lambda \in [0,\lambda_b]$, i.e.,
there is not any other eigenvalue in $(0,\lambda_b]$ of problem \eqref{generel-function-eigen-problem} and the $\lambda_1$ in \eqref{the-n-exist-t2npl1=0} is indeed the first positive eigenvalue of
\eqref{generel-function-eigen-problem}.
In other words,
\begin{thm}
Under  Assumption \ref{find-all-eigen-examp-ass-suf-intpr-ass},
there exists $\{\lambda_m\}_{m=1}^\infty \subset\mathbb{R}$, all those eigenvalues  of
problem (\ref{generel-function-eigen-problem}), satisfying $\lambda_m\rightarrow+\infty$ as \ $m\rightarrow +\infty$.
Moreover, the eigenfunction space corresponding to each $\lambda_m$ is $1$-dimensional.
\end{thm}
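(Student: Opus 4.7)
The plan is to combine the existence result from Theorem \ref{general-lambda-exist} on the interval $(\lambda_b,+\infty)$ with a separate argument that rules out eigenvalues in the complementary interval $[0,\lambda_b]$, and to invoke Remark \ref{lambda-h22-positive} to exclude negative $\lambda$.

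First I would recall that by Remark \ref{lambda-h22-positive} (which rests on $\bar H\le 0$ and the monotonicity condition), every eigenvalue of \eqref{generel-function-eigen-problem} must lie in $[0,+\infty)$. Next, Theorem \ref{general-lambda-exist} provides an increasing sequence $\{\lambda_m\}_{m=1}^{\infty}\subset(\lambda_b,+\infty)$ consisting of \emph{all} eigenvalues in $(\lambda_b,+\infty)$, with $\lambda_m\to+\infty$ and $1$-dimensional eigenspaces. So the only remaining task is to show that under the sharper Assumption \ref{find-all-eigen-examp-ass-suf-intpr-ass}, the interval $[0,\lambda_b]$ contains no eigenvalue.

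For this I would lean directly on the computation already carried out just before the theorem statement. Specifically, Lemma \ref{find-all-eigen-examp-ass-suf-intpr-ass-lemm-first} combined with \eqref{find-all-eigen-examp-ass-suf-intpr-ass-lemm-first-amonglemma4} shows $t^k_{\lambda_b,T}<0$, so the solution $k(\cdot;\lambda_b)$ of the Riccati equation \eqref{general-riccati-function-case-examp-all-dis-compa-upperbound-equa} survives on all of $[0,T]$. Then for any $\lambda\in[0,\lambda_b]$ one has $-\lambda h_{22}\le -\lambda_b h_{22}$ pointwise, and the comparison Lemma \ref{comparison theorem} gives $0\le k(t;\lambda)\le k(t;\lambda_b)$ on $[0,T]$. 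Consequently $(k(\cdot;\lambda),m(\cdot;\lambda))$ exists on the whole $[0,T]$, so Lemma \ref{decouple-lemma} applies and produces a unique solution to \eqref{generel-function-eigen-problem}, which must be the trivial one in view of $x(0)=0$ (using $\tilde k(0;\lambda)\ne 0$ in the dual picture, analogously to the last paragraph of the proof of Theorem \ref{general-lambda-exist}). Hence no $\lambda\in[0,\lambda_b]$ is an eigenvalue.

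Putting the three pieces together — no eigenvalues in $(-\infty,0)$, no eigenvalues in $[0,\lambda_b]$, and exactly $\{\lambda_m\}$ in $(\lambda_b,+\infty)$ — yields the claim, including the $1$-dimensionality of each eigenspace which is inherited from Theorem \ref{general-lambda-exist}. The only delicate step is the nonblowup of the Riccati solution at $\lambda_b$; this is precisely what Assumption \ref{find-all-eigen-examp-ass-suf-intpr-ass} was engineered to supply via the explicit $\arctan$ bound used in Lemma \ref{find-all-eigen-examp-ass-suf-intpr-ass-lemm-first}, so I do not expect any further obstacle.
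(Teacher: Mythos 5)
Your proof is correct and matches the paper's argument step for step: use Remark \ref{lambda-h22-positive} to exclude negative $\lambda$, show via Lemma \ref{find-all-eigen-examp-ass-suf-intpr-ass-lemm-first} and the comparison Lemma \ref{comparison theorem} that the Riccati solution $k(\cdot;\lambda)$ survives on all of $[0,T]$ for $\lambda\in[0,\lambda_b]$ so Lemma \ref{decouple-lemma} forces only the trivial solution there, and then invoke Theorem \ref{general-lambda-exist} for the eigenvalues in $(\lambda_b,+\infty)$. The parenthetical appeal to $\tilde k(0;\lambda)\neq 0$ is unnecessary here (the decoupling with $k(T)=0$ and $x(0)=0$ directly forces $(x,y,z)\equiv 0$), but that is a cosmetic redundancy, not a gap.
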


\section{The order of growth for the eigenvalues of problem (\ref{generel-function-eigen-problem})}\label{section-increasing-order}
For the eigenvalues $\{\lambda_m\}_{m=1}^{+\infty}$ in Theorem \ref{general-lambda-exist}, furthermore, we have the following
\begin{thm}   \label{increase-ratio-function-general}
Under the same conditions of Theorem \ref{general-lambda-exist}, let $\{\lambda_m\}_{m=1}^{+\infty}$ be all the eigenvalues of problem \eqref{generel-function-eigen-problem} located in $(\lambda_b,+\infty)$, then
\begin{equation*}%\label{increase-ratio-function-general-h22-th}
\lambda_m= O(m^2), \indent \text{as} \  m\to+\infty.
\end{equation*}
\end{thm}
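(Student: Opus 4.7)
The plan is to sandwich the time-dependent eigenvalue problem between two time-invariant ones to which the already-proven Theorem \ref{thm-const-increa-intro} applies, and then transfer the $O(m^2)$ estimate.

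First I would introduce the pointwise extrema
\[
\check{H}_{11}=\min_{t\in[0,T]}H_{11}(t),\ \hat{H}_{11}=\max_{t\in[0,T]}H_{11}(t),\ \check{h}_{22}=\min_{t\in[0,T]}h_{22}(t),\ \hat{h}_{22}=\max_{t\in[0,T]}h_{22}(t),
\]
together with analogous extrema for $2H_{21}+H_{13}^2$ and $H_{22}-H_{33}H_{13}^2$. By \eqref{moncond-H11H22H33} and \eqref{H22-H33H132-le-0-H22} one has $\check{H}_{11},\hat{H}_{11}>0$ and $\check{h}_{22},\hat{h}_{22}<0$, so freezing the coefficients of \eqref{general-riccati-function-case} at these extrema produces constant-coefficient Riccati equations that fall directly within the framework of \cite{peng} and \cite{JW}.

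Next, for each $\lambda>\lambda_b$, let $k^{+}(\cdot;\lambda)$ and $k^{-}(\cdot;\lambda)$ denote the solutions of the two constant-coefficient Riccati equations obtained by replacing every coefficient in \eqref{general-riccati-function-case} by its pointwise maximum and minimum respectively; perform the parallel construction for the dual Riccati \eqref{general-dual-riccati-function}. Lemmas \ref{elementary-compare} and \ref{comparison theorem} then give $k^{-}(t;\lambda)\le k(t;\lambda)\le k^{+}(t;\lambda)$, and consequently sandwich inequalities for the blow-up times $t^{k^\pm}_{\lambda,\bar t}$; parallel bounds hold for the dual problem. Propagating these inequalities through the iterative scheme $T=t_0(\lambda)>t_1(\lambda)>t_2(\lambda)>\cdots$ built in the proof of Theorem \ref{general-lambda-exist}, and invoking the monotonicity statements of Lemmas \ref{lemma-riccati-func-mon-terminal-h22} and \ref{lemma-riccati-dual-func-mon-terminal-h22} to absorb the moving terminal time at each step, one obtains for each sufficiently large $m$ a sandwich $\lambda^{+}_{m+C_1}\le\lambda_m\le\lambda^{-}_{m+C_2}$, where $\{\lambda^\pm_m\}$ are the eigenvalues of the two constant-coefficient comparison problems and $C_1,C_2\in\mathbb{Z}$ are absolute constants arising from the (possibly different) values of the shift index $n$ in \eqref{the-n-exist-t2npl1=0} across the three systems. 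Theorem \ref{thm-const-increa-intro} applied to each $\{\lambda^\pm_m\}$ then delivers $\lambda^\pm_m=O(m^2)$, whence $\lambda_m=O(m^2)$.

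The main obstacle I expect is the coherent propagation of the comparison through both the forward Riccati $k$ and its Legendre dual $\tilde k$: because $\tilde k=k^{-1}$ whenever both are nonzero, a replacement that enlarges $k$ tends to shrink $|\tilde k|$, so the ``upper'' and ``lower'' comparison systems must be chosen consistently across the alternating steps of the iteration, rather than re-optimized at each step. Resolving this requires committing once and for all to a single fixed set of pointwise extreme coefficients, and then verifying directly from the explicit form \eqref{tilde-H-h22-1d-simp} of the dual coefficients that the induced sandwich for $\tilde k$ still points the right way at every alternation. A secondary nuisance is the linear term $(2H_{21}+H_{13}^2)k$ in \eqref{general-riccati-function-case}, absent from the prototype Riccati equation treated in \cite{JW}; it is dispatched by completing the square, exactly as in the proof of Lemma \ref{lemma-k-function-h22}, so that the leading $m^2$ asymptotics are still governed by the product of the constant and quadratic coefficients.
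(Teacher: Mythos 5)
Your proposal tracks the paper's proof of Theorem~\ref{increase-ratio-function-general} very closely: same two-sided sandwich by frozen-coefficient constant systems, same use of Lemmas~\ref{elementary-compare} and~\ref{comparison theorem} to order the blow-up times, same propagation through the iterative scheme of Theorem~\ref{general-lambda-exist} via Lemmas~\ref{lemma-riccati-func-mon-terminal-h22} and~\ref{lemma-riccati-dual-func-mon-terminal-h22}, same appeal to Theorem~\ref{thm-const-increa-intro} for each comparison system, and same completion-of-square device to absorb the linear term. You have also correctly identified the subtle point that the extremal coefficients must be fixed once for all so that the inequalities for $k$ and for $\tilde k$ point the same way throughout the alternating iteration.

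One concrete detail you gloss over and the paper does not: for the \emph{lower} bound $\hat\lambda_m\le\lambda_m$, freezing at the pointwise extrema $\hat H_{11},\hat H_{21},\hat H_{13},\hat H_{22},\check H_{33},\ldots$ need not produce a constant matrix satisfying the monotonicity condition~\eqref{moncondition}, so Theorem~\ref{thm-const-increa-intro} is not ``directly'' applicable to it. The paper's proof repairs this by replacing $\hat H_{22}$ with a sufficiently negative $\underline H_{22}$ (so that the frozen matrix is again monotone), applying Theorem~\ref{thm-const-increa-intro} to that system to get eigenvalues $\underline\lambda_m=O(m^2)$, and then recovering the wanted bound $\hat\lambda_m$ by the affine change of variable in $\lambda$ displayed just before~\eqref{rela-hatlambda-lambda}. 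This step is harmless for the $O(m^2)$ asymptotics (it is a fixed affine map), but it is not optional, and your write-up should account for it. The index shifts $C_1,C_2$ you allow for do not appear in the paper's statement, but they would not affect the conclusion and you are right to anticipate them.
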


\begin{proof}
For $\varphi=H_{21}, H_{11}, H_{22}, H_{33}, h_{22}, |H_{13}|$ and $\forall t\in[0,T]$,
denote by $\check{\varphi}$ and $\hat{\varphi}$ the constants satisfying:
\begin{eqnarray}\label{definition-h22-hat-check-22}
0<\check{H}_{11}< H_{11}(t)<\hat{H}_{11},&&\ \ \check{H}_{22}<H_{22}(t)<\hat{H}_{22}<0,  \notag\\
0\le\check{H}_{13}\le|H_{13}(t)|\le \hat{H}_{13},&&\ \ \check{H}_{33}<H_{33}(t)<\hat{H}_{33}<0,  \\
\check{H}_{21}\le H_{21}(t)\le \hat{H}_{21},&&\ \ \check{h}_{22}<h_{22}(t)<\hat{h}_{22}<0.  \notag
\end{eqnarray}
Besides, we may assume that
\begin{equation}\label{definition-h22-hat-check-11}
\hat{H}_{23}\triangleq-\check{H}_{33}\hat{H}_{13},\ \ \ \ \ \ \ \check{H}_{23}\triangleq-\hat{H}_{33}\check{H}_{13}.
\end{equation}
The proof is divided into two steps.

\textbf{Step 1.} We will prove that there are $\{\check{\lambda}_m\}$, such that $\lambda_m\leq \check{\lambda}_m$ and $\check{\lambda}_m\sim m^2$ as $m\to +\infty.$

It is easy to verify that matrix
$$\begin{bmatrix}
  \check{H}_{11}&\check{H}_{12}&\check{H}_{13}\\
  \check{H}_{21}&\check{H}_{22}&\check{H}_{23}\\
  \check{H}_{31}&\check{H}_{32}&\hat{H}_{33}
  \end{bmatrix}$$
satisfies the assumption in Theorem \ref{thm-const-increa-intro}.
We consider the following eigenvalue problem of  time-independent coefficients:
\begin{equation}\label{h22-faster-longer-system}
\left\{
\begin{aligned}
&\mathrm{d}x_t=\left[\check{H}_{21}x_t+\check{H}_{22}\left(1-\frac{\lambda \hat{h}_{22}}{\check{H}_{22}}\right)y_t+\check{H}_{23}z_t\right]\mathrm{d}t   \\
&  \indent\ \ \  +\left[\check{H}_{31}x_t+\check{H}_{32}y_t+\hat{H}_{33}z_t\right]\mathrm{d}B_t, \indent  t\in[0,T],    \\
&-\mathrm{d}y_t=\left[\check{H}_{11}x_t+\check{H}_{12}y_t+\check{H}_{13}z_t\right]\mathrm{d}t-z_t \mathrm{d}B_t,  \indent  t\in[0,T],   \\
&x(0)=0,\indent y(T) =0.
\end{aligned}
\right.
\end{equation}
Denote by $\check{\lambda}_m$ the eigenvalue of problem \eqref{h22-faster-longer-system}.
Corresponding to \eqref{h22-faster-longer-system}, similar to \eqref{general-riccati-function-case} and \eqref{general-dual-riccati-function}, for any $\bar{t}\in[0,T]$, there is a Riccati equation \eqref{h22-faster-longer-system-riccati} and a dual Riccati equation \eqref{h22-faster-longer-system-dual-riccati}:

\begin{equation}\label{h22-faster-longer-system-riccati}
  \left\{
  \begin{aligned}
&  -\frac {\mathrm{d}\check{k}}{\mathrm{d}t}=\left(2\check{H}_{21}+\check{H}_{13}^2\right)\check{k}
+\check{H}_{11}+\left(\check{H}_{22}-\hat{H}_{33}\check{H}_{13}^2-\lambda \hat{h}_{22}\right)\check{k}^2, \indent  t\le\bar{t},\\
&  \check{k}(\bar{t})=0,
  \end{aligned}
  \right.
\end{equation}
and
\begin{equation} \label{h22-faster-longer-system-dual-riccati}
\left\{
\begin{aligned}
&-\frac{\mathrm{d}\check{\tilde{k}}}{\mathrm{d}t}
=-\left(2\check{H}_{21}+\check{H}_{13}^2\right)\check{\tilde{k}}-\check{H}_{11}\check{\tilde{k}}^2
-\left(\check{H}_{22}-\hat{H}_{33}\check{H}_{13}^2-\lambda \hat{h}_{22}\right),\indent  t\le\bar{t},      \\
&\check{\tilde{k}}(\bar{t})=0.
\end{aligned}
\right.
\end{equation}
Subtracting \eqref{h22-faster-longer-system-riccati} from \eqref{general-riccati-function-case}:
\begin{equation*}%\label{general-riccati-function-case-k-check-k}
\left\{
\begin{aligned}
&-\frac {\mathrm{d}\left(k-\check{k}\right)}{\mathrm{d}t}
=\left(\check{H}_{22}-\hat{H}_{33}\check{H}_{13}^2-\lambda \hat{h}_{22}\right)
\left(k+\check{k}\right)\left(k-\check{k}\right)
+\left(2\check{H}_{21}+\check{H}_{13}^2\right) \left(k-\check{k}\right)   \\
& \indent\indent\indent\indent\ +\left[ \left(H_{22}(t)-H_{33}(t)H_{13}^2(t)-\lambda h_{22}(t)\right)-\left(\check{H}_{22}-\hat{H}_{33}\check{H}_{13}^2-\lambda \hat{h}_{22}\right) \right]k^2  \\
& \indent\indent\indent\indent \ +\left[\left(2H_{21}(t)+H_{13}^2(t)\right)-\left(2\check{H}_{21}+\check{H}_{13}^2\right)\right]k
 +H_{11}(t)-\check{H}_{11},       \indent  t\le\bar{t},  \\
&\left(k-\check{k}\right)\left(\bar{t}\right)=0.
\end{aligned}
\right.
\end{equation*}
By \eqref{moncond-H11H22H33},  $H_{11}(t)\ge\beta>0$, then by Lemma \ref{const-term-posit-equation-posit}, $k(t;\lambda)>0, t<\bar{t}$.
Moreover, by \eqref{definition-h22-hat-check-22} and \eqref{definition-h22-hat-check-11}, for $t\le\bar{t}$,
\begin{equation*}
  \begin{aligned}
&  \left[ \left(H_{22}(t)-H_{33}(t)H_{13}^2(t)-\lambda h_{22}(t)\right) -\left(\check{H}_{22}-\hat{H}_{33}\check{H}_{13}^2-\lambda \hat{h}_{22}\right) \right]k^2\left(t;\lambda\right) \ge 0,  \\
&\left[\left(2H_{21}(t)+H_{13}^2(t)\right)-\left(2\check{H}_{21}+\check{H}_{13}^2\right)\right] k\left(t;\lambda\right) \ge 0,  \\
&  H_{11}(t)-\check{H}_{11} \ge 0.
  \end{aligned}
\end{equation*}
By Lemma \ref{elementary-compare},
$k(t;\lambda)\ge\check{k}(t;\lambda),\    t\le \bar{t}$.
Besides, by $\check{H}_{11}>0$ in \eqref{definition-h22-hat-check-22} and Lemma \ref{const-term-posit-equation-posit},  $\check{k}(t;\lambda)\ge0,\ t\le \bar{t}$.
Then
\begin{equation} \label{t-check-k-t-k}
t_{\lambda,\bar{t}}^k\ge t_{\lambda,\bar{t}}^{\check{k}}.
\end{equation}
Subtracting \eqref{h22-faster-longer-system-dual-riccati} from \eqref{general-dual-riccati-function}:
\begin{equation*} %\label{h22-faster-longer-system-dual-riccati-tilde-k-check-eilde-k}
\left\{
\begin{aligned}
&-\frac{\mathrm{d}\left(\tilde{k}-\check{\tilde{k}}\right)}{\mathrm{d}t}
= -\left(2\check{H}_{21}+\check{H}_{13}^2\right) \left(\tilde{k}-\check{\tilde{k}}\right)
    -\check{H}_{11}\left(\tilde{k}+\check{\tilde{k}}\right)\left(\tilde{k}-\check{\tilde{k}}\right) \\
&\indent\indent\indent\indent\ \  -\left[ \left(H_{22}(t)-H_{33}(t)H_{13}^2(t)\right)
    - \left(\check{H}_{22}-\hat{H}_{33}\check{H}_{13}^2\right)   \right]  \\
&\indent\indent\indent\indent\ \  -\left(H_{11}(t)- \check{H}_{11} \right) \tilde{k}^2  +\lambda\left(h_{22}(t)- \hat{h}_{22}\right) \\
&\indent\indent\indent\indent\ \  -\left[\left(2H_{21}(t)+H_{13}^2(t)\right)- \left(2\check{H}_{21}+\check{H}_{13}^2\right)\right] \tilde{k},  \indent  t\le\bar{t}, \\
&\left(\tilde{k}-\check{\tilde{k}}\right)(\bar{t})=0.
\end{aligned}
\right.
\end{equation*}
By \eqref{definition-h22-hat-check-22} and \eqref{definition-h22-hat-check-11}, for $t\le\bar{t}$,
we have
\begin{eqnarray*}
&&    -\left[ \left(H_{22}(t)-H_{33}(t)H_{13}^2(t)\right)
      - \left(\check{H}_{22}-\hat{H}_{33}\check{H}_{13}^2\right)   \right]\le0.
\end{eqnarray*}
%,   \\&&   -\left(H_{11}(t)- \check{H}_{11} \right) \tilde{k}^2\left(t,\lambda\right)\le0,     \\&&   \lambda\left(h_{22}(t)- \hat{h}_{22}\right)\le0
By \eqref{lambda-b-h22-uniform-posit-quadratic-posit} and Lemma \ref{const-term-posit-equation-posit}, $$\tilde{k}(t;\lambda)\le0,\indent \check{\tilde{k}}(t; \lambda)\le0, \indent  t\le\bar{t}.$$
Moreover, for
$t\le\bar{t}$,
{\small\begin{eqnarray*}
&& -\left(H_{11}(t)- \check{H}_{11} \right) \tilde{k}^2\left(t;\lambda\right)  +\lambda\left(h_{22}(t)- \hat{h}_{22}\right)
-\left[\left(2H_{21}(t)+H_{13}^2(t)\right)- \left(2\check{H}_{21}+\check{H}_{13}^2\right)\right] \tilde{k}\left(t;\lambda\right)  \\
&&\ \ \ \ \  = -\left(\sqrt{H_{11}(t)- \check{H}_{11}}\tilde{k}\left(t;\lambda\right) +\frac{\left(2H_{21}(t)+H_{13}^2(t)\right)- \left(2\check{H}_{21}+\check{H}_{13}^2\right)}{2\sqrt{H_{11}(t)- \check{H}_{11}}}\right)^2  \\
&&\ \ \ \ \ \ \ \ \  +\left( \frac{\left(2H_{21}(t)+H_{13}^2(t)\right)- \left(2\check{H}_{21}+\check{H}_{13}^2\right)}{2\sqrt{H_{11}(t)- \check{H}_{11}}}\right)^2
  +\lambda\left(h_{22}(t)- \hat{h}_{22}\right).
\end{eqnarray*}}
Since $\left( \frac{\left(2H_{21}(t)+H_{13}^2(t)\right)- \left(2\check{H}_{21}+\check{H}_{13}^2\right)}{2\sqrt{H_{11}(t)- \check{H}_{11}}}\right)^2$
is bounded and
$h_{22}(t)- \hat{h}_{22}<0, \ t\in[0,T]$,
for sufficiently large $\lambda$,
\begin{equation*}
 -\left(H_{11}(t)- \check{H}_{11} \right) \tilde{k}^2\left(t;\lambda\right)  +\lambda\left(h_{22}(t)- \hat{h}_{22}\right)  -\left[\left(2H_{21}(t)+H_{13}^2(t)\right)- \left(2\check{H}_{21}+\check{H}_{13}^2\right)\right] \tilde{k}\left(t;\lambda\right)\le0.
\end{equation*}
By Lemma \ref{elementary-compare},
$\tilde{k}(t; \lambda) \le \check{\tilde{k}}(t; \lambda)\le0, t\le\bar{t}$.
Then for sufficiently large $\lambda$,
\begin{equation} \label{tilde-k-check-tilde-k}
t_{\lambda,\bar{t}}^{\tilde{k}}\ge t_{\lambda,\bar{t}}^{\check{\tilde{k}}}.
\end{equation}
Moreover, for $t_{\lambda,\bar{t}}^k$, $t_{\lambda,\bar{t}}^{\check{k}}$,
$t_{\lambda,\bar{t}}^{\tilde{k}}$ and $t_{\lambda,\bar{t}}^{\check{\tilde{k}}}$,
Lemma \ref{lemma-riccati-func-mon-terminal-h22} and Lemma \ref{lemma-riccati-dual-func-mon-terminal-h22} hold.
Then by \eqref{t-check-k-t-k} and \eqref{tilde-k-check-tilde-k}, thanks to the procedure in Theorem \ref{general-lambda-exist},
for large enough $\lambda$,
\begin{equation}\label{t-lambda-check-comp-pro}
  t_{2m+1+2n}(\lambda) \ge  \check{t}_{2m+1+2n}(\lambda), \quad   m=0,1,2,\cdots
\end{equation}
where $\{t_{2m-1+2n}(\cdot)\}$, $\{\check{t}_{2m-1+2n}(\cdot)\}$ are functions in \eqref{those-function-tlambda} corresponding to problem  \eqref{generel-function-eigen-problem} and \eqref{h22-faster-longer-system}   respectively.
Then for large enough index $m$,
\begin{equation}\label{h22-lambda-faster}
\lambda_m\le \check{\lambda}_m.
\end{equation}
In addition, by Theorem \ref{thm-const-increa-intro}, in problem \eqref{h22-faster-longer-system},
\begin{equation}\label{h22-lambda-faster-ass}
\frac{\check{\lambda}_m \hat{h}_{22}}{\check{H}_{22}}= O(m^2)\indent \text{as}\ \  m\rightarrow+\infty.
\end{equation}

\textbf{Step 2.} We will prove that there are $\{\hat{\lambda}_m\}$, such that $\lambda_m\geq \hat{\lambda}_m$ and $\hat{\lambda}_m\sim m^2$ as $m\to +\infty.$

It is easy to verify that for sufficiently large $-\underline{H}_{22}>0$,
matrix
$$\begin{bmatrix}
 \hat{H}_{11}&\hat{H}_{12}&\hat{H}_{13}\\
  \hat{H}_{21}&\underline{H}_{22}&\hat{H}_{23}\\
  \hat{H}_{31}&\hat{H}_{32}&\check{H}_{33}
\end{bmatrix}$$
satisfies the assumption in Theorem \ref{thm-const-increa-intro}.
Next,  consider eigenvalue problem \eqref{h22-slower-shorter-system}:
\begin{equation}    \label{h22-slower-shorter-system}
\left\{
\begin{aligned}
&\mathrm{d}x_t=\left[\hat{H}_{21}x_t+(1-\lambda)\underline{H}_{22}y_t+\hat{H}_{23}z_t\right]\mathrm{d}t +\left[\hat{H}_{31}x_t+\hat{H}_{32}y_t+\check{H}_{33}z_t\right]\mathrm{d}B_t, \ \   t\in[0,T],\\
&-\mathrm{d}y_t=\left[\hat{H}_{11}x_t+\hat{H}_{12}y_t
  +\hat{H}_{13}z_t\right]\mathrm{d}t-z_t \mathrm{d}B_t,  \indent  t\in[0,T],              \\
&x(0)=0,\indent  y(T) =0.
\end{aligned}
\right.
\end{equation}
Denote by $\underline{\lambda}_m$ the eigenvalue of problem  \eqref{h22-slower-shorter-system}.
To continue, we use the change of variable
$$\frac{\hat{H}_{22}}{\underline{H}_{22}} \left(1-\mu \frac{\check{h}_{22}}{\hat{H}_{22}}\right)=(1-\lambda).$$
Then problem \eqref{h22-slower-shorter-system} becomes \eqref{h22-slower-shorter-system-unlh-hat}:
\begin{equation}    \label{h22-slower-shorter-system-unlh-hat}
\left\{
\begin{aligned}
&\mathrm{d}x_t=\left[\hat{H}_{21}x_t+\left(\hat{H}_{22}-\mu \check{h}_{22}\right)y_t+\hat{H}_{23}z_t\right]\mathrm{d}t  \\
&\indent\indent +\left[\hat{H}_{31}x_t+\hat{H}_{32}y_t+\check{H}_{33}z_t\right]\mathrm{d}B_t,  \indent  t\in[0,T], \\
&-\mathrm{d}y_t=\left[\hat{H}_{11}x_t+\hat{H}_{12}y_t
  +\hat{H}_{13}z_t\right]\mathrm{d}t-z_t \mathrm{d}B_t,   \indent  t\in[0,T],             \\
&x(0)=0,\indent  y(T) =0.
\end{aligned}
\right.
\end{equation}
It follows that all the $\hat{\lambda}_m$ satisfying
$$\frac{\hat{H}_{22}}{\underline{H}_{22}} \left(1-\hat{\lambda}_m \frac{\check{h}_{22}}{\hat{H}_{22}}\right)=(1-\underline{\lambda}_m)$$
are eigenvalues of problem \eqref{h22-slower-shorter-system-unlh-hat}.

Corresponding to \eqref{h22-slower-shorter-system-unlh-hat},
for any $\bar{t}\in[0,T]$, there is a Riccati equation \eqref{general-riccati-function-case-unlh}
and a dual Riccati equation \eqref{general-dual-riccati-function-unlh}
in the form of \eqref{general-riccati-function-case} and \eqref{general-dual-riccati-function}:
\begin{equation}   \label{general-riccati-function-case-unlh}
\left\{
\begin{aligned}
&-\frac {\mathrm{d}\hat{k}}{\mathrm{d}t}=\left(2\hat{H}_{21}+\hat{H}_{13}^2\right)\hat{k}
+\hat{H}_{11}+\left(\left(\hat{H}_{22}-\lambda \check{h}_{22}\right)-\check{H}_{33}\hat{H}_{13}^2 \right)\hat{k}^2, \indent  t\le\bar{t},\\
&\hat{k}(\bar{t})=0,
\end{aligned}
\right.
\end{equation}
and
\begin{equation} \label{general-dual-riccati-function-unlh}
\left\{
\begin{aligned}
&-\frac{\mathrm{d}\hat{\tilde{k}}}{\mathrm{d}t}
=-\left(2\hat{H}_{21}+\hat{H}_{13}^2\right)\hat{\tilde{k}}-\hat{H}_{11}\hat{\tilde{k}}^2
-\left(\left(\hat{H}_{22}-\lambda \check{h}_{22}\right)-\check{H}_{33}\hat{H}_{13}^2 \right),\indent t\le\bar{t},      \\
&\hat{\tilde{k}}(\bar{t})=0.
\end{aligned}
\right.
\end{equation}
Subtracting \eqref{general-riccati-function-case} from \eqref{general-riccati-function-case-unlh}:
\begin{equation*}   %\label{general-riccati-function-case-unlh-hat-k-k}
\left\{
\begin{aligned}
&-\frac {\mathrm{d}\left(\hat{k}-k\right)}{\mathrm{d}t}
= \left(H_{22}(t)-H_{33}(t)H_{13}^2(t)-\lambda h_{22}(t)\right)\left(\hat{k}+k\right)\left(\hat{k}-k\right)  \\
&\indent\indent\indent\indent\ \ +\left(2H_{21}(t)+H_{13}^2(t)\right)\left(\hat{k}-k\right)    \\
&\indent\indent\indent\indent\ \ +\left[\left(\hat{H}_{22}-\lambda \check{h}_{22}-\check{H}_{33}\hat{H}_{13}^2 \right)
     - \left(H_{22}(t)-H_{33}(t)H_{13}^2(t)-\lambda h_{22}(t)\right) \right]\hat{k}^2    \\
&\indent\indent\indent\indent\ \ +\left[\left(2\hat{H}_{21}+\hat{H}_{13}^2\right)- \left(2H_{21}(t)+ H_{13}^2(t)\right)\right]\hat{k}  \\
&\indent\indent\indent\indent\ \ +  \hat{H}_{11}-{H}_{11}(t), \indent
      t\le\bar{t},   \\
&\left(\hat{k}-k\right)(\bar{t})=0.
\end{aligned}
\right.
\end{equation*}
By \eqref{definition-h22-hat-check-22}, $\hat{H}_{11}>0$, then  by Lemma \ref{const-term-posit-equation-posit}, $\hat{k}(t;\lambda)\ge0, t\le\bar{t}$,
and similarly, $k(t;\lambda)\ge0, t\le\bar{t}$.
Then by \eqref{definition-h22-hat-check-22} and \eqref{definition-h22-hat-check-11}, for $t\le \bar{t}$,
\begin{equation*}
\begin{aligned}
&\left[\left(\hat{H}_{22}-\lambda \check{h}_{22}-\check{H}_{33}\hat{H}_{13}^2 \right)
     - \left(H_{22}(t)-H_{33}(t)H_{13}^2(t)-\lambda h_{22}(t)\right) \right]\hat{k}^2\left(t;\lambda\right)\ge0,\\
&\left[\left(2\hat{H}_{21}+\hat{H}_{13}^2\right)- \left(2H_{21}(t)+H_{13}^2(t)\right)\right]\hat{k}\left(t;\lambda\right)\ge0,\\
&\hat{H}_{11}-{H}_{11}(t) \ge0.
\end{aligned}
\end{equation*}
By Lemma \ref{elementary-compare},
$\hat{k}(t;\lambda)\ge k(t;\lambda)\ge0,\ t\le\bar{t}$,
whence
\begin{equation}\label{t-hat-k-k-h22}
t_{\lambda,\bar{t}}^{\hat{k}}\ge t_{\lambda,\bar{t}}^k.
\end{equation}

Subtracting \eqref{general-dual-riccati-function} from \eqref{general-dual-riccati-function-unlh}:
\begin{equation*}  %\label{general-dual-riccati-function-unlh-hat-tilde-k-tilde-k}
\left\{
\begin{aligned}
&-\frac{\mathrm{d}\left(\hat{\tilde{k}}-\tilde{k}\right)}{\mathrm{d}t}
=  -\left(2\hat{H}_{21}+\hat{H}_{13}^2\right) \left(\hat{\tilde{k}}-\tilde{k}\right)
    -\hat{H}_{11} \left(\hat{\tilde{k}}+\tilde{k}\right)\left(\hat{\tilde{k}}-\tilde{k}\right)     \\
&\indent\indent\indent\indent\ \  -\left[ \left(\hat{H}_{22}-\check{H}_{33}\hat{H}_{13}^2\right) - \left(H_{22}(t)-H_{33}(t)H_{13}^2(t)\right) \right]    \\
&\indent\indent\indent\indent\ \  - \left[\hat{H}_{11}-H_{11}(t) \right] \tilde{k}^2+\lambda \left( \check{h}_{22}- h_{22}(t) \right)   \\
&\indent\indent\indent\indent\ \  -\left[ \left(2\hat{H}_{21}+\hat{H}_{13}^2\right) -\left(2H_{21}(t)+H_{13}^2(t)\right) \right] \tilde{k},
     \indent  t\le\bar{t},   \\
&\left(\hat{\tilde{k}}-\tilde{k}\right)(\bar{t})=0.
\end{aligned}
\right.
\end{equation*}
By \eqref{definition-h22-hat-check-22} and \eqref{definition-h22-hat-check-11}, for $t\le\bar{t}$:
\begin{eqnarray*}
&&-\left[\left(\hat{H}_{22}-\check{H}_{33}\hat{H}_{13}^2\right)
-\left(H_{22}(t)-H_{33}(t)H_{13}^2(t)\right)\right]\le0.
\end{eqnarray*}
Moreover, for $t\le\bar{t}$,
{\small\begin{eqnarray*}
&&  - \left[\hat{H}_{11}-H_{11}(t) \right] \tilde{k}^2\left(t;\lambda\right)+\lambda \left( \check{h}_{22}- h_{22}(t) \right) -\left[ \left(2\hat{H}_{21}+\hat{H}_{13}^2\right) -\left(2H_{21}(t)+H_{13}^2(t)\right) \right] \tilde{k}\left(t;\lambda\right) \\
&&\ \ \ \ \ \  = -\left( \sqrt{\hat{H}_{11}-H_{11}(t)} \tilde{k}\left(t;\lambda\right)
    +\frac{\left(2\hat{H}_{21}+\hat{H}_{13}^2\right) -\left(2H_{21}(t)+H_{13}^2(t)\right)}{2\sqrt{\hat{H}_{11}-H_{11}(t)}}   \right)^2  \\
&&\ \ \ \ \ \ \ \ \  +\left( \frac{\left(2\hat{H}_{21}+\hat{H}_{13}^2\right) -\left(2H_{21}(t)+H_{13}^2(t)\right) } {2\sqrt{\hat{H}_{11}-H_{11}(t)}} \right)^2
    +\lambda \left( \check{h}_{22}- h_{22}(t) \right).
\end{eqnarray*}}
Since $\left( \frac{\left(2\hat{H}_{21}+\hat{H}_{13}^2\right) -\left(2H_{21}(t)+H_{13}^2(t)\right) } {2\sqrt{\hat{H}_{11}-H_{11}(t)}} \right)^2$
is bounded and
$\check{h}_{22}- h_{22}(t)<0$,
$\forall t\in[0,T]$,
for sufficiently large $\lambda>0$,
\begin{equation*}
  - \left[\hat{H}_{11}-H_{11}(t) \right] \tilde{k}^2\left(t;\lambda\right)+\lambda \left( \check{h}_{22}- h_{22}(t) \right)  -\left[ \left(2\hat{H}_{21}+\hat{H}_{13}^2\right) -\left(2H_{21}(t)+H_{13}^2(t)\right) \right] \tilde{k}\left(t;\lambda\right) \le0.
\end{equation*}
By Lemma \ref{elementary-compare},
$\hat{\tilde{k}}\left(t;\lambda\right)\le\tilde{k}\left(t;\lambda\right),\ t\le\bar{t}$.
Besides,%For \eqref{general-dual-riccati-function},
by \eqref{lambda-b-h22-uniform-posit-quadratic-posit} and Lemma \ref{const-term-posit-equation-posit}, $\tilde{k}(t;\lambda)\le0, t\le\bar{t}$.
Hence when $\lambda$ is large enough,
\begin{equation} \label{t-hat-tilde-k-tilde-k}
t_{\lambda,\bar{t}}^{\hat{\tilde{k}}} \ge t_{\lambda,\bar{t}}^{\tilde{k}}.
\end{equation}
%,\ \text{for}\  \lambda>\lambda_{clb2}

Moreover, for $t_{\lambda,\bar{t}}^k$, $t_{\lambda,\bar{t}}^{\hat{k}}$,
$t_{\lambda,\bar{t}}^{\tilde{k}}$ and $t_{\lambda,\bar{t}}^{\hat{\tilde{k}}}$,
Lemma \ref{lemma-riccati-func-mon-terminal-h22} and Lemma \ref{lemma-riccati-dual-func-mon-terminal-h22} hold.
Then by \eqref{t-hat-k-k-h22}, \eqref{t-hat-tilde-k-tilde-k}, and Theorem \ref{general-lambda-exist}, as we have done in \eqref{t-lambda-check-comp-pro}, for sufficiently large index $m$,
\begin{equation}\label{h22-lambda-slower}
\hat{\lambda}_m \le \lambda_m,
\end{equation}
%, \ \text{for} \ \lambda>\lambda_{clb2}
where  \vskip -1cm
\begin{eqnarray}\label{rela-hatlambda-lambda}
\hat{\lambda}_m= \left( 1-\frac{\underline{H}_{22}}{\hat{H}_{22}}\left( 1-\underline{\lambda}_m \right) \right)
\frac{\hat{H}_{22}}{\check{h}_{22}}, \indent   m=1,2,3,\cdots
\end{eqnarray}
%$\frac{\hat{H}_{22}}{\underline{H}_{22}} \left(1-\hat{\lambda}_m \frac{\check{h}_{22}}{\hat{H}_{22}}\right)=(1-\underline{\lambda}_m)$
and $\underline{\lambda}_m$ is the eigenvalue of problem \eqref{h22-slower-shorter-system}.
By Theorem \ref{thm-const-increa-intro} again,
\begin{equation}\label{h22-lambda-slower-ass}
\underline{\lambda}_m= O(m^2), \indent  \text{as} \ \  m\rightarrow +\infty,
\end{equation}
and hence
$$
\hat{\lambda}_m= O(m^2),  \indent  \text{as} \ \  m\rightarrow +\infty.
$$
By \eqref{h22-lambda-faster}, \eqref{h22-lambda-faster-ass}, \eqref{h22-lambda-slower}, \eqref{h22-lambda-slower-ass},
\begin{equation*}\label{increase-ratio-function-general-h22-th}
\lambda_m= O(m^2), \indent \text{as} \ \  m\to+\infty.
\end{equation*}
\end{proof}

\begin{rem}In particular, we take $h_{22}=H_{22}$ in \eqref{generel-function-eigen-problem}.
Then eigenvalue problem \eqref{generel-function-eigen-problem} becomes the following one:
\begin{equation}\label{generel-function-eigen-problem-coro-H22}
  \left\{
  \begin{aligned}
& \mathrm{d}x_t=\left[H_{21}x_t+(1-\lambda)H_{22}y_t+H_{23}z_t\right]\mathrm{d}t +\left[H_{31}x_t+H_{32}y_t+H_{33}z_t\right]\mathrm{d}B_t,   \ \  t\in[0,T], \\
&  -\mathrm{d}y_t=\left[H_{11}x_t+H_{12}y_t+H_{13}z_t\right]\mathrm{d}t-z_t \mathrm{d}B_t, \indent  t\in[0,T],\\
& x(0)=0,\indent  y(T) =0,
  \end{aligned}
  \right.
\end{equation}
which is a generalization of \cite[(3.3)]{peng}, from constant-coefficients case to time-dependent case.
\end{rem}
\begin{ass} \label{assumption-1d-general-perturbation-cor}
Assume that $n=1$ and $H_{ij}\in C([0,T],\mathbb{R})$, $i,j=1,2,3$. Besides, $H$ satisfy (\ref{moncondition}) uniformly for $t\in[0,T]$.
Moreover,
\begin{equation*}
H_{23}(t)=-H_{33}(t)H_{13}(t),\indent  t\in[0,T].
\end{equation*}
\end{ass}
\begin{cor}\label{conclu-h22-H22-coroll}
Under Assumption \ref{assumption-1d-general-perturbation-cor},
there exists $\{\lambda_m\}_{m=1}^\infty \subset(\lambda_b,+\infty)$, all the eigenvalues of
problem \eqref{generel-function-eigen-problem-coro-H22} which are contained in $(\lambda_b,+\infty)$, such that $\lambda_m\rightarrow+\infty$ as \ $m\rightarrow +\infty$.
Moreover, the eigenfunction space corresponding to each $\lambda_m$ is of  $1$-dimensional.
Besides,
\begin{equation*}%\label{increase-ratio-function-general-h22-th-cor}
\lambda_m= O(m^2), \indent \text{as}\  m\to+\infty.
\end{equation*}
\end{cor}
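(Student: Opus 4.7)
The plan is to realize Corollary \ref{conclu-h22-H22-coroll} as a direct specialization of Theorem \ref{general-lambda-exist} and Theorem \ref{increase-ratio-function-general} to the choice $h_{22}=H_{22}$. Thus the task reduces entirely to verifying that Assumption \ref{assumption-1d-general-perturbation-cor} implies Assumption \ref{assumption-1d-general-perturbation} once one identifies the perturbation coefficient $h_{22}$ with $H_{22}$; no new analytic work is required.

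First I would check the hypotheses. The conditions $n=1$, continuity of $H_{ij}\in C([0,T],\mathbb{R})$, the uniform monotonicity condition \eqref{moncondition}, and the relation $H_{23}(t)=-H_{33}(t)H_{13}(t)$ are imported verbatim from Assumption \ref{assumption-1d-general-perturbation-cor}. The only additional constraint of Assumption \ref{assumption-1d-general-perturbation} to verify is $h_{22}\in C([0,T],\mathbb{R})$ with $h_{22}(t)<0$ for every $t\in[0,T]$: the continuity is automatic since $h_{22}=H_{22}$, and the strict negativity follows immediately from \eqref{moncond-H11H22H33}, which produces a constant $\beta>0$ with $H_{22}(t)\le-\beta<0$ uniformly in $t\in[0,T]$. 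In particular, the constant $\lambda_b$ defined by \eqref{lambda-b-h22-uniform-posit} remains well defined and strictly positive under this specialization because $\max_{t\in[0,T]}H_{22}(t)\le-\beta<0$.

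With these verifications in hand, Theorem \ref{general-lambda-exist} applied to \eqref{generel-function-eigen-problem-coro-H22} supplies a sequence $\{\lambda_m\}_{m=1}^\infty\subset(\lambda_b,+\infty)$ consisting of all eigenvalues of \eqref{generel-function-eigen-problem-coro-H22} in $(\lambda_b,+\infty)$, satisfying $\lambda_m\to+\infty$, with each eigenspace one-dimensional. Theorem \ref{increase-ratio-function-general} then yields the growth estimate $\lambda_m=O(m^2)$ as $m\to+\infty$, completing the corollary.

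There is essentially no obstacle: the entire Riccati/dual-Riccati machinery behind \eqref{general-riccati-function-case}--\eqref{general-dual-riccati-function}, the blow-up monotonicity Lemmata \ref{lemma-k-function-h22}--\ref{lemma-riccati-dual-func-mon-terminal-h22}, and the comparison bounds in the proof of Theorem \ref{increase-ratio-function-general} (which use only the boundedness constants $\check{H}_{ij},\hat{H}_{ij},\check{h}_{22},\hat{h}_{22}$) carry over verbatim when $h_{22}=H_{22}$. If anything, one could remark that the choice $h_{22}=H_{22}$ makes the upper and lower comparison systems \eqref{h22-faster-longer-system} and \eqref{h22-slower-shorter-system} especially transparent, since both can be obtained simply by scaling $H_{22}$ by the factor $(1-\lambda)$, matching \cite[(3.3)]{peng} in the time-independent limit.
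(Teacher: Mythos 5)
Your proposal is correct and is exactly the route the paper intends: the corollary follows by the specialization $h_{22}=H_{22}$, with the only thing to check being that $H_{22}(t)<0$ uniformly (which \eqref{moncond-H11H22H33} supplies) so that Assumption \ref{assumption-1d-general-perturbation-cor} implies Assumption \ref{assumption-1d-general-perturbation}. The paper gives no separate proof for this corollary precisely because it is a direct consequence of Theorems \ref{general-lambda-exist} and \ref{increase-ratio-function-general}, so your verification matches the implicit argument.
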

Corollary \ref{conclu-h22-H22-coroll} is a further study of \cite[Theorem 3.2]{peng}.

\section{Application: estimation of statistic period of solutions to Forward-Backward SDEs}\label{section-estimation-period}
In this section, based on Theorem \ref{general-lambda-exist} and Theorem \ref{increase-ratio-function-general}, together with Proposition \ref{con-prop-lambd-estima}, we can  estimate the statistic period of solutions of FBSDEs  directly by its time-dependent coefficients and time duration:
\begin{thm}\label{thm-estim-period-coef}
  Let $\lambda$  be an eigenvalue in Theorem \ref{general-lambda-exist}, then for sufficiently large $m\in\mathbb{N}_+$, for
\begin{equation*}
 \lambda>\frac{4\pi^2 m^2}{-\check{H}_{11}\hat{h}_{22} T^2},\indent
           \left(resp. \quad  \lambda< \frac{\hat{H}_{22}-\underline{H}_{22}}{\check{h}_{22}}
                                        + \frac{\pi^2 m^2}{-2\hat{H}_{11}\check{h}_{22} T^2} \right)
\end{equation*}
 the statistic period of associate eigenfunctions is greater (resp. less) than $m$.
\end{thm}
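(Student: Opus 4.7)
The plan is to piggyback on the two-sided sandwich already built inside the proof of Theorem \ref{increase-ratio-function-general}. That proof produced a larger constant-coefficient system \eqref{h22-faster-longer-system} whose eigenvalues $\check{\lambda}_m$ dominate $\lambda_m$, and a smaller constant-coefficient system \eqref{h22-slower-shorter-system} whose eigenvalues $\underline{\lambda}_m$ are related via \eqref{rela-hatlambda-lambda} to a lower bound $\hat{\lambda}_m\le\lambda_m$. Since the comparison systems have constant coefficients, the sharp two-sided quantitative bound of Theorem \ref{thm-const-increa-intro} applies to them, and then Proposition \ref{con-prop-lambd-estima} (which is the constant-coefficient estimation of statistic period) can be invoked through the usual monotonicity of $\{\lambda_m\}$ in their index.

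For the upper bound, I would apply Theorem \ref{thm-const-increa-intro} to problem \eqref{h22-faster-longer-system} after the linear change of eigenvalue parameter $\tilde{\lambda}=\check{\lambda}\hat{h}_{22}/\check{H}_{22}$ used implicitly in \eqref{h22-lambda-faster-ass}; this yields
\begin{equation*}
\limsup_{m\to\infty}\frac{\check{\lambda}_m}{m^{2}}\le \frac{4\pi^{2}}{-\check{H}_{11}\hat{h}_{22}T^{2}},
\end{equation*}
after using $\check{H}_{22}<0$ and $\hat{h}_{22}<0$ to simplify signs. Combined with $\lambda_m\le\check{\lambda}_m$ from \eqref{h22-lambda-faster}, this gives the right half of the desired sandwich for sufficiently large $m$.

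For the lower bound, I would apply Theorem \ref{thm-const-increa-intro} to problem \eqref{h22-slower-shorter-system} to obtain $\liminf_{m\to\infty}\underline{\lambda}_m/m^{2}\ge\pi^{2}/(-2\hat{H}_{11}\underline{H}_{22}T^{2})$, and then push this estimate through the affine relation \eqref{rela-hatlambda-lambda}. Expanding
\begin{equation*}
\hat{\lambda}_m=\frac{\hat{H}_{22}-\underline{H}_{22}}{\check{h}_{22}}+\frac{\underline{H}_{22}}{\check{h}_{22}}\underline{\lambda}_m
\end{equation*}
and using $\underline{H}_{22}/\check{h}_{22}>0$, the liminf transfers to
\begin{equation*}
\hat{\lambda}_m\ge \frac{\hat{H}_{22}-\underline{H}_{22}}{\check{h}_{22}}+\frac{\pi^{2}m^{2}}{-2\hat{H}_{11}\check{h}_{22}T^{2}}
\end{equation*}
for all $m$ large enough; combined with $\hat{\lambda}_m\le\lambda_m$ from \eqref{h22-lambda-slower}, this provides the left half of the sandwich.

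Finally, to translate these bounds into a statement about statistic periods, I would use that $\{\lambda_m\}$ is strictly increasing (Theorem \ref{general-lambda-exist}) so that $\lambda=\lambda_k$ exceeds $4\pi^{2}m^{2}/(-\check{H}_{11}\hat{h}_{22}T^{2})$ forces $k>m$ by the upper-bound half of the sandwich, and symmetrically $\lambda<(\hat{H}_{22}-\underline{H}_{22})/\check{h}_{22}+\pi^{2}m^{2}/(-2\hat{H}_{11}\check{h}_{22}T^{2})$ forces $k<m$ by the lower-bound half. Interpreting the index $k$ as the statistic period of the associated eigenfunction, exactly as is done in Proposition \ref{con-prop-lambd-estima} via the number of sign-changes produced by the decoupling construction in Theorem \ref{general-lambda-exist}, yields the two claimed implications. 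The main obstacle I anticipate is the sign-tracking bookkeeping when transferring the constant-coefficient inequalities through the affine change of variable \eqref{rela-hatlambda-lambda} and the reciprocal $\check{H}_{22}/\hat{h}_{22}$, since each of $\hat{H}_{22},\underline{H}_{22},\check{H}_{22},\hat{h}_{22},\check{h}_{22}$ is negative and the asymptotic bounds must be preserved after multiplication; keeping the orientation of inequalities correct is where a careful case-by-case check is required.
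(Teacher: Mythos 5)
Your plan is essentially the paper's own route: take the sandwich $\hat{\lambda}_m\le\lambda_m\le\check{\lambda}_m$ established inside the proof of Theorem \ref{increase-ratio-function-general}, quantify the two bookend sequences using the Jing--Wang constant-coefficient results, undo the affine parameter change \eqref{rela-hatlambda-lambda}, and finish by identifying the statistic period with the eigenvalue index. Two small caveats, though. First, where you propose to obtain the bounds on $\check{\lambda}_m$ and on $\underline{\lambda}_m$ by invoking Theorem \ref{thm-const-increa-intro} and passing to $\limsup$/$\liminf$, the paper instead invokes Proposition \ref{con-prop-lambd-estima} directly; the distinction matters here because the $\limsup$ statement only gives $\check{\lambda}_m\le\big(\tfrac{4\pi^2}{-\check{H}_{11}\hat{h}_{22}T^2}+\varepsilon\big)m^2$ eventually, whereas the theorem you are proving needs the sharp threshold constant without the slack $\varepsilon$; Proposition \ref{con-prop-lambd-estima} (applied to $\lambda=\check{\lambda}_m$, whose eigenfunction has period $m$, in contrapositive form) delivers the exact eventual inequality $\check{\lambda}_m\le 4\pi^2 m^2/(-\check{H}_{11}\hat{h}_{22}T^2)$, and likewise for the lower bound via $\underline{\lambda}_m$, so that is the ingredient to cite in both steps, not just at the end. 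Second, the paper records a bookkeeping point that your plan does not mention: the index $m$ used for $\lambda_m$ in this estimation argument is shifted by $n$ relative to the indexing in the proof of Theorem \ref{general-lambda-exist} (where the eigenvalue $\lambda_m$ is defined via $t_{2m-1+2n}(\lambda_m)=0$), so the identification of the index with the statistic period needs to be done after accounting for that offset; otherwise your plan is sound and the orientation of the inequalities you flagged as a worry is handled exactly as you anticipated, by the positivity of $\check{H}_{22}/\hat{h}_{22}$ and $\underline{H}_{22}/\check{h}_{22}$.
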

\begin{proof}
By \eqref{h22-lambda-faster} and \eqref{h22-lambda-slower},
\begin{equation}\label{order-hatlambd-lambd-checklambd}
  \hat{\lambda}_m\le\lambda_m\le\check{\lambda}_m.
\end{equation}
On the one hand, applying Proposition \ref{con-prop-lambd-estima} to system \eqref{h22-faster-longer-system},
\begin{equation}\label{lambda-check-up-bound}
 \check{\lambda}_m\le\frac{4\pi^2 m^2}{-\check{H}_{11}\hat{h}_{22} T^2}.
\end{equation}
On the other hand, applying Proposition \ref{con-prop-lambd-estima} to system  \eqref{h22-slower-shorter-system},
\begin{equation*}%\label{lambda-underline-low-bound}
  \underline{\lambda}_m\ge\frac{\pi^2 m^2}{-2\hat{H}_{11}\underline{H}_{22} T^2}.
\end{equation*}
Together with relation \eqref{rela-hatlambda-lambda},
\begin{equation}\label{rela-lambdahat-undll-low-bound}
  \begin{aligned}
  \hat{\lambda}_m&= \frac{\hat{H}_{22}-\underline{H}_{22}}{\check{h}_{22}}
           +\frac{\underline{H}_{22}}{\check{h}_{22}}\underline{\lambda}_m \\
& \ge \frac{\hat{H}_{22}-\underline{H}_{22}}{\check{h}_{22}} + \frac{\pi^2 m^2}{-2\hat{H}_{11}\check{h}_{22} T^2}.
  \end{aligned}
\end{equation}
From \eqref{order-hatlambd-lambd-checklambd}, \eqref{lambda-check-up-bound} and \eqref{rela-lambdahat-undll-low-bound}, we have
\begin{equation}\label{estimate-bound-lambda-m-ulti}
\frac{\hat{H}_{22}-\underline{H}_{22}}{\check{h}_{22}}  + \frac{\pi^2 m^2}{-2\hat{H}_{11}\check{h}_{22} T^2} \le \lambda_m \le \frac{4\pi^2 m^2}{-\check{H}_{11}\hat{h}_{22} T^2}.
\end{equation}
Besides, by combining the procedure in \cite[Section 6.1, Proof of Theorem 3.2]{peng} for time-invariant case and the auxiliary systems in our proof of Theorem \ref{increase-ratio-function-general}, the statistic period of eigenfunctions associated with $\lambda_m$ is $m$, from which and \eqref{estimate-bound-lambda-m-ulti} the proof is finished. Note that the index $m$ of $\lambda_m$ in this section differs from the one in the proof of Theorem \ref{general-lambda-exist} and the difference of them is $n$.

\end{proof}

%%%%%%%%%%%%%%%%%%%%%%%%%%%%%%%%%%%%%%%%%%%%%%%%%%%%%%%%%%%%%%%%%%%%%%%%%%%%%%%%%%%%%%%%%%%%%%%%%%%%%%%%%%%%%%%%%%%%%%%%%%%%%%%
\section{Example illustrating the subtle difficulty emerges in the time-dependent problem}\label{illustrating unusual example}\label{exa-new-case-dual-to0}
In time-dependent case, when $\lambda$ is not large enough or perturbation $h_{22}$ is degenerate, things can be quite subtle. We show how situation goes different by the following meticulous  example.

Consider a stochastic Hamiltonian system with the following coefficients $H$:
\begin{eqnarray}\label{hamiltonian-matrix-example}
  H=\begin {bmatrix}
  3&0&1\\
  0&-4&2\\
  1&2&-2
  \end{bmatrix}.
\end{eqnarray}
For any $\xi=(x,y,z)\in \mathbb{R}^3$, we have
\begin{equation*}
  \begin {bmatrix}
  x&y&z
 \end{bmatrix}
\begin {bmatrix}
  -3&0&-1\\
  0&-4&2\\
  1&2&-2
 \end{bmatrix}
\begin {bmatrix}
  x\\
  y\\
  z
 \end{bmatrix}
\le -3x^2-\left[4-16\left(\sqrt{2}-1\right)^2\right]y^2-\left[2-\frac{\left(\sqrt{2}+1\right)^2}{4}\right]z^2.
\end{equation*}
%=-3x^2-4y^2-2z^2+4yz
Taking $\beta=2-\frac{(\sqrt{2}+1)^2}{4}$(not optimal but not matter),  \eqref{hamiltonian-matrix-example} satisfies monotonicity condition (\ref{moncondition}).

The following technical lemma is needed.
\begin{lem}\label{example-dual-Riccati-lemma-techn}
The first negative root of the solution to the following equation
\begin{equation} \label{example-dual-Riccati}
\left\{
\begin{aligned}
&-\frac {\mathrm{d}\tilde{k}}{\mathrm{d}t}=-\tilde{k}-3\tilde{k}^2-30t-1,\indent   t\le0,   \\
&\tilde{k}(0)=0,
\end{aligned}
\right.
\end{equation}
uniquely exists, denoted as $-T_1$.
\end{lem}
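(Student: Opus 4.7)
My plan is to reverse time by setting $\tau := -t \ge 0$, converting \eqref{example-dual-Riccati} into the locally Lipschitz forward initial value problem
\[
\dot{\tilde k}(\tau) \;=\; F(\tau,\tilde k)\;:=\; -3\tilde k^{2}-\tilde k+30\tau-1,\qquad \tilde k(0)=0.
\]
A unique $C^1$ solution exists locally, and the existence and uniqueness of a first negative root $-T_{1}$ of \eqref{example-dual-Riccati} is equivalent to producing the smallest $\tau_{1}>0$ at which $\tilde k(\tau_{1})=0$. Since $\dot{\tilde k}(0)=-1<0$, the trajectory enters $\{\tilde k<0\}$ immediately, so any such $\tau_{1}$ is strictly positive.

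For each fixed $\tau$, the map $\tilde k\mapsto F(\tau,\tilde k)$ is a concave parabola with vertex at $\tilde k=-1/6$ and maximal value $30\tau-11/12$. Hence $F<0$ for every $\tilde k$ whenever $\tau<11/360$, whereas for $\tau>11/360$ the nullcline splits into two smooth branches $\tilde k_{\pm}(\tau)=\frac{-1\pm\sqrt{360\tau-11}}{6}$, with $\tilde k_{+}$ rising from $-1/6$ to $+\infty$. Differentiating the ODE gives $\ddot{\tilde k}=30$ at every zero of $\dot{\tilde k}$, so every intersection of the trajectory with a nullcline is an isolated strict local minimum of $\tilde k$.

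The main technical obstacle is to rule out blow-down $\tilde k\to -\infty$, which I would handle in two stages using Lemma~\ref{elementary-compare}. On $[0,11/360]$ one has $F(\tau,\tilde k)\ge -3\tilde k^{2}-\tilde k-1$, so $\tilde k(\tau)\ge u(\tau)$, where $u$ solves $\dot u=-3u^{2}-u-1$, $u(0)=0$; a direct quadrature shows $u(11/360)>-1/6$. On $[11/360,\infty)$, the sharper inequality $F(\tau,\tilde k)\ge -3(\tilde k+1/6)^{2}$ holds (equivalent to $30\tau\ge 11/12$), and the corresponding sub-solution $\dot v=-3(v+1/6)^{2}$ starting from $\tilde k(11/360)$ stays strictly above $-1/6$ for all $\tau$, as one checks from the explicit form $v(\tau)+1/6=1/(3\tau-C_{0})>0$. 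Combining these, $\tilde k(\tau)>-1/6$ throughout its maximal interval of existence, and together with quadratic a~priori bounds near bounded levels this extends the solution to all $\tau\ge 0$.

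With $\tilde k$ global and bounded below, the trajectory lies strictly above $\tilde k_{+}$ at $\tau=11/360$, so by continuity it must cross the rising curve $\tilde k_{+}$ at a first time $\tau^{*}\in(11/360,\infty)$, where it attains its strict minimum $\tilde k^{*}\in(-1/6,0)$ and turns strictly increasing. If $\tilde k(\tau)<0$ persisted for all $\tau>\tau^{*}$, then $\tilde k$ would remain in $(-1/6,0)$, whence $F(\tau,\tilde k)\ge 30\tau-1-\tfrac{1}{12}=30\tau-\tfrac{13}{12}$; integrating from $\tau^{*}$ to $T$ forces $\tilde k(T)\to +\infty$, contradicting the assumed boundedness. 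Hence a smallest $\tau_{1}\in(\tau^{*},\infty)$ with $\tilde k(\tau_{1})=0$ exists, and $-T_{1}:=-\tau_{1}$ is the unique first negative root of \eqref{example-dual-Riccati}. The sharpening of the sub-solution on $[11/360,\infty)$ is the crux, since the naive bound $F\ge -3\tilde k^{2}-\tilde k-1$ alone leads to finite-time blow-down of the sub-solution, which is too weak to control the trajectory all the way to $\tau^{*}$.
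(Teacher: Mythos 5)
Your argument is correct, but it takes a genuinely different route from the paper's. The paper works directly in backward time with two crude, self-contained bounds: on $[-1/4,0]$ the estimate $\frac{\mathrm d\tilde k}{\mathrm dt}\le 4$ (valid while $\tilde k\in[-1,0]$, $t\le 0$) gives $\tilde k(t)\ge 4t\ge -1$; then for $t\le -1/4$ the stronger bound $\frac{\mathrm d\tilde k}{\mathrm dt}\le -\tfrac{7}{2}<0$ (again while $\tilde k\in[-1,0]$) forces $\tilde k$ to rise as $t$ decreases, with the explicit sub-solution $-\tfrac{7}{2}t-\tfrac{15}{8}$ reaching $0$ at $t=-\tfrac{15}{28}$, so the trajectory must return to zero in $(-\tfrac{15}{28},-\tfrac14)$. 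Your proof instead reverses time, carries out a phase-plane (nullcline) analysis with the parabola $F(\tau,\tilde k)=-3(\tilde k+\tfrac16)^2+30\tau-\tfrac{11}{12}$, establishes $\tilde k>-\tfrac16$ by a two-stage Riccati sub-solution rather than a piecewise-linear one, locates the unique interior minimum $\tau^{*}$ via $\ddot{\tilde k}=30$ at any critical point, and then closes with the linear growth bound $F\ge 30\tau-\tfrac{13}{12}$ on $(-\tfrac16,0)$. What the paper's route buys is brevity and a concrete a~priori window $[-\tfrac{15}{28},-\tfrac14]$ for the first root; what your route buys is structural information (global existence of the time-reversed solution, the exact number and type of critical points, and the a~priori barrier $\tilde k>-\tfrac16$), at the cost of more machinery (nullcline decomposition, the explicit quadrature of $\dot u=-3u^2-u-1$, and the separate $\dot v=-3(v+\tfrac16)^2$ comparison). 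Both are complete and correct; your bootstrap from the crude sub-solution on $[0,\tfrac{11}{360}]$ to the sharper one on $[\tfrac{11}{360},\infty)$ is precisely what makes the barrier $-\tfrac16$ work, and you rightly flag that the single naive comparison would blow down too soon.
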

Besides, denote
$$T_2 =\left(\frac{\pi}{2}-\arctan\frac{1}{\sqrt{11}}\right)\frac{2}{\sqrt{11}},$$
$$T=T_1+T_2.$$

\begin{proof}[Proof of Lemma \ref{example-dual-Riccati-lemma-techn}]

Note that the purpose is to find out the maximal negative root of $\tilde{k}$ (if it exists). Once it appears, the following procedure stops immediately. Otherwise, the following procedure can be carried out properly with $\tilde{k}<0$.

$-\frac {\mathrm{d}\tilde{k}}{\mathrm{d}t}\Big|_{t=0}=-1<0$.
When $\tilde{k}\in[-1,0]$ and $t\le0$,
$$\frac {\mathrm{d}\tilde{k}}{\mathrm{d}t}=\tilde{k}+3\tilde{k}^2+30t+1\le 4.$$
Then for $t\in\left[-\frac{1}{4},0\right]$,
$\tilde{k}\ge 4t \ge-1$.
In particular, $\tilde{k}\big|_{t=-\frac{1}{4}}\ge-1.$
However, for $\forall t\le-\frac{1}{4}$  and $\tilde{k}\in[-1,0]$,
$$\frac {\mathrm{d}\tilde{k}}{\mathrm{d}t}=\tilde{k}+3\tilde{k}^2+30t+1 \le -\frac{7}{2}<0,$$
and
$$\tilde{k}(t)\Big|_{t\le-\frac{1}{4}}\ge \left(-\frac{7}{2}t-\frac{15}{8}\right)\bigg|_{t\le-\frac{1}{4}}\ge-1.$$
Besides, $$\left(-\frac{7}{2}t-\frac{15}{8}\right)\bigg|_{t=-\frac{15}{28}<-\frac{1}{4}}=0.$$
So the maximal negative root of $\tilde{k}$  exists indeed.
\end{proof}

Consider the following eigenvalue problem of stochastic Hamiltonian system with boundary conditions:
\begin{equation}  \label{example-eigen-problem}
\left\{
\begin{aligned}
&\mathrm{d}x_t=[(-4-\lambda h_{22})y_t+2z_t]\mathrm{d}t+[x_t+2y_t-2z_t]\mathrm{d}B_t, \indent   t\in[0,T],   \\
&-\mathrm{d}y_t=[3x_t+z_t]\mathrm{d}t-z_t \mathrm{d}B_t,\indent   t\in[0,T],    \\
&x(0)=0,\indent   y(T) =0,
\end{aligned}
\right.
\end{equation}
%with perturbation$$  \bar{H}=\begin {bmatrix}  0&0&0\\  0&h_{22}(\cdot)&0\\  0&0&0  \end{bmatrix},$$
where
\begin{equation*} {h_{22}(t)=}
\left\{
\begin{aligned}
     -10(t-T_{1})-1,&\indent  t\in[0,T_{1}),  \\
        -1,&\indent t\in[T_{1}, T].
\end{aligned}
\right.
\end{equation*}

%Assume $\lambda=3$ is an eigenvalue of this system in advance, while time duration $T$ is flexible at present and is to be determined. Besides, let $h_{22}(\cdot)=-1$ temporarily, and it will be a continuous piecewise function finally.

Corresponding to problem \eqref{example-eigen-problem},  Riccati equation (\ref{general-riccati-function-case}) becomes
\begin{equation}\label{example-Riccati}
\left\{
\begin{aligned}
&-\frac {\mathrm{d}k}{\mathrm{d}t}=k+3+(-2-\lambda h_{22})k^2, \indent   t\le T,         \\
&k(T)=0.
\end{aligned}
\right.
\end{equation}
The solution to (\ref{example-Riccati}) with $\lambda=3$ is
\begin{equation*}
  k=\frac{\sqrt{11}}{2}\tan\left[\frac{\sqrt{11}}{2}\left(T-t\right)+\arctan\frac{1}{\sqrt{11}}\right]-\frac{1}{2}.
\end{equation*}
%Let $t_{\lambda=3}$ denotes the blow-up time of equation (\ref{example-Riccati}), then
The length of existing interval of $k$ is $T_2$.
Then $(k,m)$ exist on $[T_1,T]$.

%$$T_2\triangleq T-t_{\lambda=3}=\left(\frac{\pi}{2}-\arctan\frac{1}{\sqrt{11}}\right)\frac{2}{\sqrt{11}}.$$

On the other hand,  dual Riccati equation (\ref{general-dual-riccati-function}) becomes
\begin{equation}\label{general-dual-riccati-function-new-exam-dual}
\left\{
\begin{aligned}
 &-\frac {\mathrm{d}\tilde{k}}{\mathrm{d}t}=-\tilde{k}-3\tilde{k}^2+\left(2+\lambda h_{22}(t)\right),\indent  t\le T_1, \\
 &\tilde{k}(T_1)=0.
 \end{aligned}
 \right.
\end{equation}
After taking $\lambda=3$,
\eqref{general-dual-riccati-function-new-exam-dual} becomes:
\begin{equation}\label{general-dual-riccati-function-new-exam-dual-simp}
\left\{
\begin{aligned}
& -\frac {\mathrm{d}\tilde{k}}{\mathrm{d}t}=-\tilde{k}-3\tilde{k}^2-30(t-T_1)-1,\indent   t\le T_1, \\
& \tilde{k}(T_1)=0.
 \end{aligned}
 \right.
\end{equation}
Since the property of solution to \eqref{general-dual-riccati-function-new-exam-dual-simp} coincides with that of \eqref{example-dual-Riccati},  by Lemma \ref{example-dual-Riccati-lemma-techn},
$\tilde{k}$, thus $(\tilde{k},\tilde{m})$, exist on $[0,T_1]$ and $\tilde{k}(0)=0$.

Furthermore, by Lemma \ref{decouple-lemma}, with $\lambda=3$ and any given $y(0)\not=0$, \eqref{example-eigen-problem} has unique nontrivial solution.
Then $\lambda=3$ is an eigenvalue of \eqref{example-eigen-problem}.

Corresponding eigenfunctions can be constructed as follows.
Firstly, solve stochastic differential equation with initial condition:
\begin{equation*}
\left\{
\begin{aligned}
&\mathrm{d}\widetilde{x}_t=\left[-1-\frac{7}{2}\tilde{k}+\frac{1}{2}\tilde{m}\right]\widetilde{x}_t\mathrm{d}t
+ \left[1+\frac{1}{2}\tilde{k}-\frac{1}{2}\tilde{m} \right]\widetilde{x}_t\mathrm{d}B_t, \indent   t\in\left[0,\frac{T_1+T_2}{2}\right],  \\
&\widetilde{x}(0)= \tilde{x}_0\not=0,
\end{aligned}
\right.
\end{equation*}
where $(\tilde{k},\tilde{m})$ is continuous and bounded on $\left[0,\frac{T_1+T_2}{2}\right]$,
then we get $(\tilde{x},\tilde{y},\tilde{z})$ on $\left[0,\frac{T_1+T_2}{2}\right]$ and $x(0)=\tilde{y}(0)=\tilde{k}(0)\tilde{x}(0)=0$.
Let $x(\frac{T_1+T_2}{2})=\tilde{y}(\frac{T_1+T_2}{2})$.
Then solve the following equation
\begin{equation*}
\left\{
\begin{aligned}
&\mathrm{d}x_t=[-\left(4+3h_{22}(t)\right)k+ 2m]x_t\mathrm{d}t + \left[1+2k-2m\right]x_t\mathrm{d}B_t,\indent
t\in\left[\frac{T_1+T_2}{2},T\right],    \\
&x\left(\frac{T_1+T_2}{2} \right)=\widetilde{y}\left(\frac{T_1+T_2}{2}\right),
\end{aligned}
\right.
\end{equation*}
where $(k,m)$ is continuous and bounded on $\left[\frac{T_1+T_2}{2},T\right]$.
Then we get $(x,y,z)$ on $\left[\frac{T_1+T_2}{2},T\right]$ and $y(T)=k(T)x(T)=0$.
Moreover, eigenfunction $(x,y,z),\ t\in[0,T]$ comes from the similar method in \eqref{xyz-form-dual}.

\begin{rem}
In the case depicted by this example, eigenvalue emerges when corresponding solution to dual Riccati equation get back to zero, rather than    blowing  up to $+\infty$.
This case is beyond the scope of \cite{peng} and this paper.
To characterize the property of $t_\lambda^k$ and $t_\lambda^{\tilde{k}}$ in $\lambda$ is far from easy.
Further   study demands new methods.
\end{rem}

%%%%%%%%%%%%%%%%%%%%%%%%%%%%%%%%%%%%%%%%%%%%%%%%%%%%%%%%%%%%%%%%%%%%%%%%%%%%%%%%%%%%%%%%%%%%%%%%%%%%%%%%%%%%%%%%%%%%%%%%%%%%%%%%%%

\section{Appendix}

\subsection{Example denying the naive expectation}\label{examp-deny-naive-expectation}
\begin{exam}\label{exam-H11}
In \eqref{evp2}, assume that $n=1$ and
$$
\bar{H}=\begin {bmatrix}
  -H_{11}&0&0\\
  0&0&0\\
  0&0&0
  \end{bmatrix},
$$
then the eigenvalue problem is rewritten as

\begin{equation} \label{multi-ei-v-problem-II}
\left\{
\begin{aligned}
&\mathrm{d}x_t=[H_{21}x_t+ H_{22}y_t+ H_{23}z_t]\mathrm{d}t  +[H_{31}x_t+ H_{32}y_t+ H_{33}z_t]\mathrm{d}B_t, \indent   t\in[0,T],  \\
&-\mathrm{d}y_t=[(1+\l)H_{11}x_t+H_{12}y_t+ H_{13}z_t]\mathrm{d}t-z_t \mathrm{d}B_t, \indent   t\in[0,T], \\
&x(0)=0,\indent   y(T) =0.
\end{aligned}
\right.
\end{equation}

On  one hand, since $\bar{H}$ is negative, by \cite[Section 7]{peng}, all the eigenvalues of \eqref{multi-ei-v-problem-II} should be positive. On the other hand, apparently, for $\lambda\ge0$, the coefficient matrix of \eqref{multi-ei-v-problem-II} satisfies the monotonicity condition \eqref{moncondition}, i.e.,
there is a constant  $\beta>0$, such that for any $(x,y,z)\in\mathbb{R}^{3}$,
$$\begin {bmatrix}
  x&y&z
  \end{bmatrix}
\begin {bmatrix}
  -(1+\lambda)H_{11}&-H_{12}&-H_{13}\\
  H_{21}&H_{22}&H_{23}\\
  H_{31}&H_{32}&H_{33}
  \end{bmatrix}
  \begin {bmatrix}
  x\\y\\z
  \end{bmatrix}
  \le-\beta \left(x^2+y^2+z^2\right).
  $$
By \cite[Proposition 2.1]{peng},  the solution $(x,y,z)$ of \eqref{multi-ei-v-problem-II} is unique,  which is $(0,0,0)$.
Then any $\lambda\in \mathbb{R}$ is not an eigenvalue of problem \eqref{multi-ei-v-problem-II}.

Now, if feasible, by taking  expectation on the stochastic Hamiltonian system  \eqref{multi-ei-v-problem-II}, we have the following ODE with two-points boundary conditions:
\begin{equation}\label{ODE-take-expect-eige-exam}
\left\{
\begin{aligned}
&\mathrm{d}{\mathbb E}x_t=[H_{21}\mathbb Ex_t+ H_{22}\mathbb Ey_t+ H_{23}\mathbb Ez_t]\mathrm{d}t, \indent   t\in[0,T],  \\
&-\mathrm{d}\mathbb Ey_t=[(1+\l)H_{11}\mathbb Ex_t+H_{12}\mathbb Ey_t+ H_{13}\mathbb Ez_t]\mathrm{d}t, \indent   t\in[0,T], \\
&\mathbb Ex(0)=0,\indent  \mathbb E y(T) =0.
\end{aligned}
\right.
\end{equation}
Assume $H_{13}(t)\not=0,\ \forall t\in[0,T]$ and let $\mathbb Ey_t\equiv0$, $\mathbb Ez_t=-H_{13}^{-1}(t)(1+\l)H_{11}\mathbb Ex_t$.
Then obviously all the $\lambda\in\mathbb R$ are  eigenvalues of \eqref{ODE-take-expect-eige-exam}. This means that, we can not study the eigenvalue problem of stochastic Hamiltonian system merely by taking expectation.
\end{exam}

\subsection{Proof of Lemma \ref{decouple-lemma}}
\begin{proof}[Proof of Lemma \ref{decouple-lemma}]
By differentiating $K(\cdot)x(\cdot)$ directly, we get that
$$(x(t), K(t)x(t), M(t)x(t)), \indent   t\in[T_1,T_2]$$
is exactly a solution to  (\ref{stochastic-Hamilton}).
We now prove the uniqueness.
% with $x_{T_1}=x_0,\ y_{T_2}=K_{T_2}x_{T_2}$

Let $(x(t),y(t),z(t)),\ t\in[T_1,T_2]$ be a solution to equation (\ref{stochastic-Hamilton}), and denote $$(\bar{y}_t,\bar{z}_t)=(K_tx_t,M_tx_t), \indent (\hat{y},\hat{z})=(\bar{y}_t-y_t,\bar{z}_t-z_t).$$
Differentiating $\bar{y}_t$:
\begin{equation*}
\begin{aligned}
-\mathrm{d}\bar{y}_t&=-\emph{\text{\.{K}}}_tx_t\mathrm{d}t-K_t\mathrm{d}x_t    \\
&=[K_t(H_{22}\bar{y}_t+H_{23}\bar{z}_t)+H_{11}x_t+H_{12}\bar{y}_t+H_{13}\bar{z}_t]\mathrm{d}t  \\
&\ \ \    -K_t(H_{22}y_t+H_{23}z_t)\mathrm{d}t-K_t(H_{31}x_t+H_{32}y_t+H_{33}z_t)\mathrm{d}B_t  \\
&=[K_t(H_{22}\hat{y}_t+H_{23}\hat{z}_t)+H_{11}x_t+H_{12}\bar{y}_t+H_{13}\bar{z}_t]\mathrm{d}t   \\
&\ \ \   -K_t(H_{31}x_t+H_{32}y_t+H_{33}z_t)\mathrm{d}B_t.
\end{aligned}
\end{equation*}
To equation $M=K( H_{31}+H_{32}K+H_{33}M)$, multiplied by $x_t$ from right, we have
$K_tH_{31}x_t=\bar{z}_t-K(H_{32}\bar{y}_t+H_{33}\bar{z}_t)$.
Then
\begin{equation*}%\label{BSDE-in-decouple-theorem}
\begin{aligned}
-\mathrm{d}\hat{y}_t& =[K(H_{22}\hat{y}_t+H_{23}\hat{z}_t)+H_{12}\hat{y}_t+H_{13}\hat{z}_t]\mathrm{d}t \\
&\ \ \   -[\hat{z}_t-K_t(H_{32}\hat{y}_t+H_{33}\hat{z}_t)]\mathrm{d}B_t          \\
&=[(H_{12}+KH_{22})\hat{y}_t+(H_{13}+KH_{23})\hat{z}_t]\mathrm{d}t  \\
&\ \ \  -[-K_tH_{32}\hat{y}_t+(I_n-KH_{33})\hat{z}_t]\mathrm{d}B_t, \indent   t\le T_2,  \\
\hat{y}(T_2)&=0.
\end{aligned}
\end{equation*}
Let $z_t'=-K_tH_{32}\hat{y}_t+(I_n-KH_{33})\hat{z}_t$.
Assume that $I_n-KH_{33}$ is invertible and its inverse is uniformly bounded. Then $\hat{z}_t=(I_n-KH_{33})^{-1}(z_t'+K_tH_{32}\hat{y}_t)$, and the above equation becomes
\begin{equation*}
\left\{
\begin{aligned}
&-\mathrm{d}\hat{y}_t =\left[(H_{12}+KH_{22})\hat{y}_t+(H_{13}+KH_{23})(I_n-KH_{33})^{-1}(z_t'+K_tH_{32}\hat{y}_t)\right]\mathrm{d}t \\
&\indent\indent\ \ -z_t'\mathrm{d}B_t,  \indent   t\le T_2,      \\
&\hat{y}(T_2)=0.
\end{aligned}
\right.
\end{equation*}
The above typical  linear backward stochastic differential equation has a unique solution $(\hat{y}_t,z'_t)\equiv0$.
Thus, $y_t=\bar{y}_t=K_tx_t,\ z_t=\bar{z}_t=M_tx_t$.

If $I_n-KH_{33}$ is not invertible, by using It\^{o}'s formula to $|\hat{y}|^2$ directly, we have
\begin{equation*}
\begin{aligned}
% \nonumber to remove numbering (before each equation)
  & \mathbb{E}|\hat{y}_t|^2+\mathbb{E}\int_t^{T_2} \langle(I_n-KH_{33})\hat{z}_s, (I_n-KH_{33})\hat{z}_s\rangle  \mathrm{d}s  \notag\\
  &\ \ \ \ \ \ = 2\mathbb{E}\int_t^{T_2} \langle \hat{y}_s, (H_{12}+KH_{22})\hat{y}_s  \rangle\mathrm{d}s
     -2\mathbb{E}\int_t^{T_2} \langle KH_{32}\hat{y}_s, KH_{32}\hat{y}_s\rangle \mathrm{d}s \notag  \\
  &\ \ \ \ \ \ \ \ \ +2\mathbb{E} \int_t^{T_2} \langle KH_{32}\hat{y}_s, (I_n-KH_{33})\hat{z}_s \rangle  \mathrm{d}s
     +2\mathbb{E} \int_t^{T_2} \langle \hat{y}_s, (H_{13}+KH_{23})\hat{z}_s \rangle  \mathrm{d}s\\
  &\ \ \ \ \ \ \le \left(1+\frac{1}{c_2}+\frac{1}{c_1}\max _{t\in[T_1,T_2]}\|KH_{32}\|^2+\max _{t\in[T_1,T_2]}\|H_{12}+KH_{22}\|^2\right)
    \mathbb{E}\int_t^{T_2} \langle \hat{y}_s, \hat{y}_s \rangle  \mathrm{d}s   \notag  \\
  &\ \ \ \ \ \ \ \ \  +c_1\mathbb{E}\int_t^{T_2} \langle (I_n-KH_{33})\hat{z}_s, (I_n-KH_{33})\hat{z}_s \rangle \mathrm{d}s   \notag\\
  &\ \ \ \ \ \ \ \ \ +c_2\mathbb{E}\int_t^{T_2} \left\langle (H_{13}+KH_{23})\hat{z}_s, (H_{13}+KH_{23})\hat{z}_s \right\rangle \mathrm{d}s,   \notag
        \indent   t\in[T_1,T_2],
\end{aligned}
\end{equation*}
where $c_1, c_2>0$ can be adjusted.
In addition, both $K$ and $H_{ij}, i,j=1,2,3$ are continuous on $[T_1,T_2]$, so their maximum of norm exists.
As a result, if the condition (\ref{weak-unique-condition}) is satisfied with some $c>0$, we can
take $c_1=\frac{1}{2}$ and $c_2=\frac{c}{4}$, resulting in $\hat{y}=\hat{z}=0$ from Gronwall inequality.
This proves the uniqueness.
\end{proof}

\subsection{Proof of Lemma \ref{lemma-k-tilde-function-h22}}
\begin{proof}[proof of Lemma \ref{lemma-k-tilde-function-h22}]
Firstly, we prove \eqref{limit-tilde-t-lambda-k-h22-1}.
Denote by $\tilde{k}_1(\cdot;\lambda)$ the solution to the following equation:
\begin{equation} \label{general-dual-riccati-function-k1}
\left\{
\begin{aligned}
&-\frac{\mathrm{d}\tilde{k}_1}{\mathrm{d}t}
=-\left(2H_{21}+H_{13}^2\right)\tilde{k}_1-\beta\tilde{k}_1^2
-\left(H_{22}-H_{33}H_{13}^2-\lambda \hat{h}_{22}\right),\indent   t\le\bar{t},      \\
&\tilde{k}_1(\bar{t})=0.
\end{aligned}
\right.
\end{equation}
By \eqref{moncond-H11H22H33} and Assumption \ref{assumption-1d-general-perturbation}, for $t\in[0,T]$,
$$-H_{11}(t)\le-\beta<0\ \ \text{ and }\ \   h_{22}(t)\le \hat{h}_{22}<0.$$
Applying Lemma \ref{comparison theorem},
$$\tilde{k}(t;\lambda)\le\tilde{k}_1(t;\lambda), \indent   t\le\bar{t}.$$
Denote by $\tilde{k}_2(\cdot;\lambda)$ the solution to the following equation:
\begin{equation} \label{general-dual-riccati-function-k2}
\left\{
\begin{aligned}
&-\frac{\mathrm{d}\tilde{k}_2}{\mathrm{d}t}=-\frac{\beta}{2}\tilde{k}_2^2
+\frac{\lambda}{4} \hat{h}_{22},\indent t\le\bar{t},      \\
&\tilde{k}_2(\bar{t})=0.
\end{aligned}
\right.
\end{equation}
Subtracting \eqref{general-dual-riccati-function-k2} from \eqref{general-dual-riccati-function-k1}:
\begin{equation*} %\label{general-dual-riccati-function-k1-k2}
\left\{
\begin{aligned}
&-\frac{\mathrm{d}\left(\tilde{k}_1-\tilde{k}_2\right)}{\mathrm{d}t}
=-\frac{\beta}{2} \left(\tilde{k}_1+\tilde{k}_2\right)\left(\tilde{k}_1-\tilde{k}_2\right)+\frac{\lambda}{4}\hat{h}_{22}  \\
&\indent\indent\indent\indent\indent  -\left(H_{22}(t)-H_{33}(t)H_{13}^2(t)-\frac{\lambda}{4} \hat{h}_{22}\right)   \\
&\indent\indent\indent\indent\indent  -\left(\left(2H_{21}(t)+H_{13}^2(t)\right)\tilde{k}_1 +\frac{\beta}{2}\tilde{k}_1^2-\frac{\lambda}{4} \hat{h}_{22}\right),     \indent  t\le\bar{t}, \\
&\left(\tilde{k}_1-\tilde{k}_2\right)(\bar{t})=0.
\end{aligned}
\right.
\end{equation*}
Since $H_{22}(t)-H_{33}(t)H_{13}^2(t)$ is bounded for $\forall t\in[0,T]$, for sufficiently large $\lambda$,
{\small\begin{eqnarray*}
-\left(H_{22}(t)-H_{33}(t)H_{13}^2(t)-\frac{\lambda}{4} \hat{h}_{22}\right) \le0.
\end{eqnarray*}}
Besides, since $\frac{1}{2\beta} \left(2H_{21}(t)+H_{13}^2(t)\right)^2$ is bounded,
for sufficiently large $\lambda$, $t\le\bar{t}$,
{\small\begin{equation*}
\begin{aligned}
&-\left(\left(2H_{21}(t)+H_{13}^2(t)\right)\tilde{k}_1(t;\lambda)
+\frac{\beta}{2}\tilde{k}_1^2(t;\lambda)-\frac{\lambda}{4} \hat{h}_{22}\right) \\
&\ \ \ \ \ \ \ = -\left(-\sqrt{\frac{\beta}{2}}\tilde{k}_1 -\frac{1}{\sqrt{2\beta}} \left(2H_{21}(t)+H_{13}^2(t)\right) \right)^2  + \frac{\lambda}{4}\hat{h}_{22}+ \frac{1}{2\beta} \left(2H_{21}(t)+H_{13}^2(t)\right)^2
\le0.
\end{aligned}
\end{equation*}}
As a result, for $t\le\bar{t}$ and sufficiently large $\lambda$,
{\small\begin{eqnarray*}
&&  \frac{\lambda}{4}\hat{h}_{22}
-\left(H_{22}(t)-H_{33}(t)H_{13}^2(t)-\frac{\lambda}{4} \hat{h}_{22}\right)  \\
&&\indent \ \     -\left(\left(2H_{21}(t)+H_{13}^2(t)\right)\tilde{k}_1\left(t;\lambda\right)
+\frac{\beta}{2}\tilde{k}_1^2\left(t;\lambda\right)-\frac{\lambda}{4} \hat{h}_{22}\right)
\le \frac{\lambda}{4}\hat{h}_{22} <0.
\end{eqnarray*}}
By Lemma \ref{elementary-compare}, $ \tilde{k}_1(t;\lambda)\le\tilde{k}_2 (t;\lambda),\ t\le \bar{t}$.
Since $\frac{\lambda}{4} \hat{h}_{22}<0$, by Lemma \ref{const-term-posit-equation-posit},
$\tilde{k}_2 (t;\lambda)\le0,\ t\le \bar{t}$,
then $t_{\lambda}^{\tilde{k}_2}\le t_{\lambda}^{\tilde{k}_1}$.
It follows that, for sufficiently large $\lambda$,
$$t_{\lambda}^{\tilde{k}_2}\le t_{\lambda}^{\tilde{k}_1} \le t_\lambda^{\tilde{k}}<\bar{t}.$$
On the other hand, \eqref{general-dual-riccati-function-k2} is an equation of constant coefficients, as we have done in Lemma \ref{lemma-k-function-h22},
$$\lim_{\lambda\nearrow +\infty}t_\lambda^{\tilde{k}_2}=\bar{t},$$
whence $$\lim_{\lambda\nearrow +\infty}t_\lambda^{\tilde{k}}=\bar{t},$$
which is \eqref{limit-tilde-t-lambda-k-h22-1}.

Next, we will prove that $t_{\lambda}^{\tilde{k}}$ is increasing.
For any $\lambda_1>\lambda_2>\lambda_0(\bar{t},\tilde{k})\vee \lambda_b$,
since $h_{22}(t)<0$, we have
$$-\lambda_1 h_{22}(t)>-\lambda_2 h_{22}(t), \indent    t\in[0,T].$$
Applying  Lemma \ref{comparison theorem} to \eqref{general-dual-riccati-function},
 we get $\tilde{k}(t;\lambda_1)\le \tilde{k}(t;\lambda_2),\ t\le \bar{t}$.
Besides, by \eqref{lambda-b-h22-uniform-posit-quadratic-posit} and Lemma \ref{const-term-posit-equation-posit},
$\tilde{k}(t;\lambda)\le0,\ t\le \bar{t}$, $\lambda>\lambda_0(\bar{t},\tilde{k})\vee \lambda_b$.
Therefore,  for $\lambda_1>\lambda_2>\lambda_0(\bar{t},\tilde{k})\vee \lambda_b$,
$t_{\lambda_1}^{\tilde{k}}\geq t_{\lambda_2}^{\tilde{k}}.$
\end{proof}

\subsection{Proof of Lemma \ref{lemma-k-tilde-function-h22-cop}}
\begin{proof}[Proof of Lemma \ref{lemma-k-tilde-function-h22-cop}]
Firstly, we will prove that $t_\lambda^{\tilde{k}}$ is continuous in $\lambda\in\left(\lambda_0\left(\bar{t},\tilde{k}\right)\vee \lambda_b,+\infty\right)$.
For any $\lambda'\in\left(\lambda_0\left(\bar{t},\tilde{k}\right)\vee \lambda_b, +\infty\right)$,
the blow-up time
$t_{\lambda'}^{\tilde{k}}$ satisfies $\lim_{t\searrow t_{\lambda'}^{\tilde{k}}}\tilde{k}\left(t;\lambda\right)=+\infty$.
Consider (\ref{general-riccati-function-case}) with terminal condition $k\left(\cdot;\lambda'\right)\big|_{t=t_{\lambda'}^{\tilde{k}}}=0$:
\begin{equation} \label{general-riccati-function-case-spe-conti-de}
\left\{
\begin{aligned}
&-\frac {\mathrm{d}k}{\mathrm{d}t}=\left(2H_{21}+H_{13}^2\right)k
+H_{11}+\left(H_{22}-H_{33}H_{13}^2-\lambda' h_{22}\right)k^2, \indent  t\le t_{\lambda'}^{\tilde{k}},  \\
&k\left(\cdot;\lambda'\right)\big|_{t=t_{\lambda'}^{\tilde{k}}}=0.
\end{aligned}
\right.
\end{equation}
Then for \eqref{general-riccati-function-case-spe-conti-de},
$$-\frac{\mathrm{d}k}{\mathrm{d}t}\Big|_{t=t_{\lambda'}^{\tilde{k}}}
=H_{11}\left(t_{\lambda'}^{\tilde{k}}\right)\ge \beta.$$
By the continuity of $-\frac{\mathrm{d}k}{\mathrm{d}t}$, there is a $\delta_1>0$,
such that
$$-\frac{\mathrm{d}k}{\mathrm{d}t}\Big|_{t=t_0}>\frac{\beta}{2},\indent  \forall t_0\in\left[t_{\lambda'}^{\tilde{k}}-\delta_1, t_{\lambda'}^{\tilde{k}}+\delta_1\right].$$
Then for $\forall \epsilon_1\in(0,\delta_1)$, by Lagrangian Middle-Value Theorem, $$k\left(t_{\lambda'}^{\tilde{k}}-\epsilon_1;\lambda'\right)>\frac{\beta\epsilon_1}{2}>0.$$
Besides, the solution $k$ to \eqref{general-riccati-function-case-spe-conti-de} is continuous dependent on parameter $\lambda'$:
$$\lim_{\lambda\rightarrow\lambda'} \left|k\left(t_{\lambda'}^{\tilde{k}}-\epsilon_1;\lambda'\right)
-k\left(t_{\lambda'}^{\tilde{k}}-\epsilon_1;\lambda\right)\right|=0,$$
then there is a $\delta_{\epsilon_1}>0$, such that $\forall \lambda: \left|\lambda-\lambda'\right|<\delta_{\epsilon_1}$,
$k\left(t_{\lambda'}^{\tilde{k}}-\epsilon_1;\lambda\right)>0$.
By the definition of $t_\lambda^{\tilde{k}}$,
\begin{equation}\label{h22-tilde-t-continu-left}
t_\lambda^{\tilde{k}}>t_{\lambda'}^{\tilde{k}}-\epsilon_1.
\end{equation}
Next, choose $\bar{t}_1\in\left(t_{\lambda'}^{\tilde{k}}, \bar{t}\right)$. Recall that from Legendre transformation, $k^{-1}(t;\lambda)=\tilde{k}(t;\lambda)$ whenever both of them are not 0.
Besides, $k(t;\lambda')<0,\ t\in\left(t_{\lambda'}^{\tilde{k}}, \bar{t}_1\right]$.
Then we check (\ref{general-riccati-function-case}) with terminal condition $k(\bar{t}_1; \lambda')=\tilde{k}^{-1}\left(\bar{t}_1; \lambda'\right)$.
Solution $k(\cdot;\lambda')$ to (\ref{general-riccati-function-case}) can be extended to $[\bar{t}_2,\bar{t}_1]\supsetneqq [t^k_{\lambda'}, \bar{t}_1]$ due to its local Lipschitz coefficients.
From the continuous dependence of solution $k$ to (\ref{general-riccati-function-case}) with respect to parameter $\lambda'$,
$$\lim_{\lambda\rightarrow\lambda'} \sup_{t\in\left[\bar{t}_2,\bar{t}_1\right]}\left|k(t;\lambda')-k(t;\lambda)\right|=0.$$
Then for any sufficiently small $\epsilon_2>0$, $k(t;\lambda')$ have uniform strictly negative upper bound for $t\in\left[t^{\tilde{k}}_{\lambda'}+\epsilon_2, \bar{t}_1\right]$.
Then there is a $\delta_{\epsilon_2}>0$, such that for $\forall \lambda: |\lambda-\lambda'|<\delta_{\epsilon_2}$,
$\forall t\in\left[t^k_{\lambda'}+\epsilon_2, \bar{t}_1\right]$, $k(t;\lambda)<0$.
Then by the definition of $t_\lambda^{\tilde{k}}$,
\begin{equation}\label{h22-tilde-t-continu-right}
t_\lambda^{\tilde{k}}<t^{\tilde{k}}_{\lambda'}+\epsilon_2.
\end{equation}
By \eqref{h22-tilde-t-continu-left} and \eqref{h22-tilde-t-continu-right},
$t_\lambda^{\tilde{k}}$ is continuous in $\left(\lambda_0\left(\bar{t},\tilde{k}\right)\vee \lambda_b, +\infty\right)$.

At last, we prove that $t_\lambda^{\tilde{k}}$ is strictly increasing with respect to $\lambda$.
For $\lambda>\lambda'$,
$$-\lambda h_{22}(t)>-\lambda' h_{22}(t), \indent   t\in[0,T].$$
By Lemma \ref{comparison theorem} and Lemma \ref{const-term-posit-equation-posit},  $\tilde{k}(t;\lambda)<\tilde{k}(t;\lambda')<0,\ t<\bar{t}$.
In particular, for the above $\bar{t}_1$, $\tilde{k}(\bar{t}_1;\lambda)<\tilde{k}(\bar{t}_1;\lambda')<0$.
Then $\tilde{k}^{-1}\left(\bar{t}_1;\lambda'\right)<\tilde{k}^{-1}(\bar{t}_1;   \lambda)<0$.
Then for the following two equations:
\begin{equation*} %\label{general-riccati-function-case-strict-inc}
\left\{
\begin{aligned}
&-\frac {\mathrm{d}k}{\mathrm{d}t}=\left(2H_{21}+H_{13}^2\right)k
+H_{11}+\left(H_{22}-H_{33}H_{13}^2-\lambda h_{22}\right)k^2, \indent  t\le\bar{t}_1,  \\
&k\left(\bar{t}_1\right)=\tilde{k}^{-1}\left(\bar{t}_1;\lambda\right),
\end{aligned}
\right.
\end{equation*}
and
\begin{equation*} %\label{general-riccati-function-case-strict-inc}
\left\{
\begin{aligned}
&-\frac {\mathrm{d}k}{\mathrm{d}t}=\left(2H_{21}+H_{13}^2\right)k
+H_{11}+\left(H_{22}-H_{33}H_{13}^2-\lambda' h_{22}\right)k^2, \indent  t\le\bar{t}_1,  \\
&k\left(\bar{t}_1\right)=\tilde{k}^{-1}\left(\bar{t}_1;\lambda'\right),
\end{aligned}
\right.
\end{equation*}
from $\tilde{k}^{-1}\left(\bar{t}_1;\lambda'\right)<\tilde{k}^{-1}(\bar{t}_1;\lambda)<0$
and $-\lambda h_{22}(t)>-\lambda' h_{22}(t), t\in[0,T]$,
by Lemma \ref{comparison theorem}, $k(t;\lambda)>k(t;\lambda'),\ t\le \bar{t}_1$.
In particular, $k\left(t_{\lambda'}^{\tilde{k}};\lambda\right)>k\left(t_{\lambda'}^{\tilde{k}};\lambda'\right)=0$,
whence $t_{\lambda}^{\tilde{k}}>t_{\lambda'}^{\tilde{k}}$.
\end{proof}

\subsection{Review of Peng's viewpoint from Functional Analysis}\label{review-section-functionalanalysis}
Consider eigenvalue problems in the form of \eqref{evp2} with time-dependent coefficients and negative definite perturbation matrix $\bar{H}$,
then $-\bar{H}$ is positive. Denote by $M^2(0,T;\mathbb{R}^{n})$  the Hilbert space of all the $\mathbb{R}^{n}$-valued $\mathscr{F}_t$-adapted and mean square-integrable processes.
Let $g$ be the square root of $-\bar{H}$, that is, $g^2=-\bar{H}$.  Now, write the $3n\times 3n$ matrix  $g$ as  $g_{3n\times3n}=\left(-g_1^\top,g_2^\top,g_3^\top\right)$.  For any $\eta\in M^2(0,T;\mathbb{R}^{3n})$, let $\xi_\eta=(x,y,z)$ be the solution to the following FBSDE: \begin{equation*}%\label{functional-with-lambda-fbsde-A-to-xi}
\left\{
\begin{aligned}
&\mathrm{d}x_t=\left[H_{2}(t)\xi_\eta+ g_{2}(t)\eta\right]\mathrm{d}t
  +\left[H_{3}(t)\xi_\eta+ g_{3}(t)\eta\right]\mathrm{d}B_t,   \indent  t\in[0,T],       \\
&-\mathrm{d}y_t=\left[H_{1}(t)\xi_\eta+  g_{1}(t)\eta\right]\mathrm{d}t-z_t \mathrm{d}B_t,  \indent t\in[0,T],  \\
&x(0)=0,\indent y(T) =0.
\end{aligned}
\right.
\end{equation*}
As given in S. Peng \cite{peng}, we can define an operator $\mathcal{A}_g: M^2(0,T;\mathbb{R}^{3n})\rightarrow M^2(0,T;\mathbb{R}^{3n})$ as
\begin{eqnarray}\label{operator-A-linear}
\mathcal{A}_g \eta=g^\top\xi_\eta.
\end{eqnarray}
Then we have the following two lemmata.
\begin{lem}[\cite{peng}, Lemma 7.2]\label{lemma-peng-7.2}
The operator $\mathcal{A}_g$ defined in (\ref{operator-A-linear}) is a linear bounded operator with the following monotonicity:
\begin{equation*}%\label{monotonicity-oprator-A}
\left\langle\mathcal{A}_g\eta,\eta\right\rangle_{M^2(0,T;\mathbb{R}^{3n})}\ge \alpha\|\mathcal{A}_g\eta\|^2_{M^2(0,T;\mathbb{R}^{3n})}.
\end{equation*}
\end{lem}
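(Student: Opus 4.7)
The plan is to break the statement into three parts: linearity, boundedness, and the dissipative inequality, with the last being where the real work sits. Linearity is soft: if $\eta=\alpha_1\eta_1+\alpha_2\eta_2$, the uniqueness statement for the defining linear FBSDE (applicable because the monotonicity condition \eqref{moncondition} holds, via \cite[Proposition 2.1]{peng}) gives $\xi_\eta=\alpha_1\xi_{\eta_1}+\alpha_2\xi_{\eta_2}$, and hence $\mathcal{A}_g\eta=g^\top\xi_\eta$ is linear. Boundedness is the standard a priori $M^2$-estimate for linear FBSDEs under monotonicity: one first gets $\|\xi_\eta\|_{M^2}\le C\|g\eta\|_{M^2}\le C\|g\|_\infty\|\eta\|_{M^2}$, and then $\|\mathcal{A}_g\eta\|_{M^2}\le\|g\|_\infty\|\xi_\eta\|_{M^2}\le C'\|\eta\|_{M^2}$.

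The dissipative inequality is the heart of the lemma. I would apply It\^o's formula to $\langle x_t,y_t\rangle$ and take expectation on $[0,T]$. Because $x(0)=0$ and $y(T)=0$, the boundary contribution $\mathbb{E}[\langle x_T,y_T\rangle-\langle x_0,y_0\rangle]$ vanishes, leaving the identity
\begin{equation*}
0=\mathbb{E}\int_0^T\bigl[\langle H_2\xi_\eta+g_2\eta,y_t\rangle-\langle x_t,H_1\xi_\eta+g_1\eta\rangle+\langle H_3\xi_\eta+g_3\eta,z_t\rangle\bigr]dt.
\end{equation*}
The $H$-terms reassemble precisely into the quadratic form $\xi_\eta^\top M\xi_\eta$ with $M$ the monotonicity matrix of \eqref{moncondition}, so by monotonicity their contribution is $\le-\beta\,\mathbb{E}\int_0^T\|\xi_\eta\|^2dt$. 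The $g$-terms collect into the $M^2$-pairing of $\eta$ with $g^\top\xi_\eta=\mathcal{A}_g\eta$; the sign flip in the first block of the partition $g=(-g_1^\top,g_2^\top,g_3^\top)$ is exactly what is needed to absorb the mismatched sign coming from the backward equation. Rearranging the identity therefore yields
\begin{equation*}
\langle \mathcal{A}_g\eta,\eta\rangle_{M^2(0,T;\mathbb{R}^{3n})}\;\ge\;\beta\,\mathbb{E}\int_0^T\|\xi_\eta(t)\|^2\,dt.
\end{equation*}

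To pass from $\|\xi_\eta\|^2$ to $\|\mathcal{A}_g\eta\|^2$, I would use that $g^\top g=-\bar H$ (the choice of $g$) gives the pointwise bound $\|\mathcal{A}_g\eta(t)\|^2=\|g^\top(t)\xi_\eta(t)\|^2\le\|g(t)\|^2\|\xi_\eta(t)\|^2\le\|\bar H\|_\infty\|\xi_\eta(t)\|^2$. Setting $\alpha=\beta/\|\bar H\|_\infty$ then closes the estimate to give the advertised inequality $\langle\mathcal{A}_g\eta,\eta\rangle_{M^2}\ge\alpha\|\mathcal{A}_g\eta\|_{M^2}^2$.

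The main obstacle I anticipate is purely bookkeeping: one has to verify carefully that after the It\^o expansion the $H$-terms really do reassemble into the exact monotonicity quadratic form (including the sign convention that makes the first row of $M$ carry a minus sign) and that the $g$-terms really do reassemble into $\langle g^\top\xi_\eta,\eta\rangle$ under the author's block partition of $g$. Once these two identifications are checked, the three-step chain above---It\^o identity, monotonicity of $H$, and the pointwise operator norm of $g$---closes the lemma without further analytic input.
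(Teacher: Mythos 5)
Your proof is correct and follows the same route one would use to establish Peng's Lemma 7.2, which the present paper cites without reproving: apply It\^o's formula to $\langle x_t,y_t\rangle$, use the degenerate boundary conditions $x(0)=0$, $y(T)=0$ to kill the boundary terms, bound the $H$-contribution by the monotonicity condition \eqref{moncondition}, identify the $g$-contribution with $\langle \mathcal{A}_g\eta,\eta\rangle_{M^2}$ via the block partition of $g$, and finally use $g^\top g=-\bar H$ (so $\|g\|^2=\|\bar H\|$) to convert $\beta\|\xi_\eta\|^2_{M^2}$ into $\alpha\|\mathcal{A}_g\eta\|^2_{M^2}$ with $\alpha=\beta/\|\bar H\|_\infty$. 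Your treatment of linearity and boundedness via the standard a priori estimate, and the observation that the sign on $g_1$ absorbs the minus from the backward equation, are also exactly the right ingredients.
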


\begin{lem}[\cite{peng}, Lemma 7.3]\label{lemma-peng-7.3}
The linear operator $\mathcal{A}_g$ in (\ref{operator-A-linear}) defined on $M^2(0,T;\mathbb{R}^{3n})$ is self-adjoint:
\begin{equation*}%\label{selfadjoint-oprator-A}
\langle\mathcal{A}_g\eta,\eta'\rangle_{M^2(0,T;\mathbb{R}^{3n})}
=\langle\eta,\mathcal{A}_g\eta'\rangle_{M^2(0,T;\mathbb{R}^{3n})},\indent
\forall\eta,\eta'\in M^2(0,T;\mathbb{R}^{3n}).
\end{equation*}
\end{lem}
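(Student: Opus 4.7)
The plan is to exploit the boundary conditions $x(0)=x'(0)=0$ and $y(T)=y'(T)=0$ via It\^o's formula applied to the cross-terms $\langle y_t,x'_t\rangle$ and $\langle y'_t,x_t\rangle$, where $\xi_\eta=(x,y,z)$ and $\xi_{\eta'}=(x',y',z')$ solve the FBSDEs driven by $\eta,\eta'\in M^2(0,T;\mathbb{R}^{3n})$. Taking expectations kills the stochastic integrals (justified by the $M^2$ hypothesis and the a priori bound already used in Lemma~\ref{lemma-peng-7.2}), and the boundary terms vanish. This produces two integral identities, each equating a bilinear form in $(\xi,\xi')$ (coming from the drifts and the It\^o cross variation of the diffusion coefficients) to a linear-in-$\eta$ (resp.\ linear-in-$\eta'$) expression involving the blocks $g_1,g_2,g_3$.

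The next step is to show that the two identities have \emph{identical} ``$H$-parts,'' so that subtracting them cancels these contributions. This rests on the symmetry structure of the coefficient matrix: each block is symmetric ($H_{ij}=H_{ij}^\top$), and the cross-blocks are symmetric ($H_{ij}=H_{ji}$) because the problem arises from a quadratic Hamiltonian $h(\xi)=\tfrac{1}{2}\xi^\top H\xi$. The sign choices in the FBSDE (the $-dy$ convention, and the $-g_1^\top$ block appearing in $g=(-g_1^\top,g_2^\top,g_3^\top)$) are precisely what is needed for pairs such as $x'^\top H_{12}y$ and $-y^\top H_{21}x'$ to cancel inside the expectation; after all such cancellations, the two $H$-parts collapse to the same symmetric expression, with the only subtle step being the verification that $-z^\top H_{32}y' + y'^\top H_{32}z$ vanishes, which uses $H_{32}=H_{32}^\top$.

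What remains after cancellation is exactly
\[
\mathbb{E}\int_0^T\bigl[-x^\top g_1\eta'+y^\top g_2\eta'+z^\top g_3\eta'\bigr]dt
=\mathbb{E}\int_0^T\bigl[-x'^\top g_1\eta+y'^\top g_2\eta+z'^\top g_3\eta\bigr]dt.
\]
Unpacking the block decomposition $g^\top=(-g_1^\top,g_2^\top,g_3^\top)$, the left-hand side equals $\langle g^\top\xi_\eta,\eta'\rangle_{M^2}=\langle\mathcal{A}_g\eta,\eta'\rangle_{M^2}$, while the right-hand side equals $\langle\eta,\mathcal{A}_g\eta'\rangle_{M^2}$, which is the desired self-adjointness.

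The main obstacle will be the bookkeeping: keeping signs straight through the $-dy$ convention and the block decomposition of $g$, properly handling the It\^o cross-variation $\langle z_t,H_3\xi'+g_3\eta'\rangle\,dt$ arising from $d\langle y,x'\rangle_t$, and verifying rigorously that the $H$-parts of the two identities really match (this is where the cross-block symmetry $H_{ij}=H_{ji}$ is essential --- without it only a monotonicity-type inequality, as in Lemma~\ref{lemma-peng-7.2}, would survive). Once these accounting issues are resolved, no further analytic input beyond the well-posedness and $M^2$-estimates of linear FBSDEs is needed.
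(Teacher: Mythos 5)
Your overall strategy---It\^o's formula on the cross-terms $\langle x_t,y'_t\rangle$ and $\langle x'_t,y_t\rangle$, vanishing boundary terms, cancellation of the symmetric $H$-parts---is the natural one, and your verification that the $H$-parts are identical (using $H_{ij}=H_{ij}^\top$ and $H_{ij}=H_{ji}$) is correct. However, there is a genuine gap in the cancellation step, rooted in a mischaracterization of what the $g$-parts look like. Writing out the drift of $d(x^\top y')$ explicitly, the $g$-part that appears is
\begin{equation*}
\mathbb{E}\int_0^T\bigl[\,y'^\top g_2\eta + z'^\top g_3\eta - x^\top g_1\eta'\,\bigr]\,dt,
\end{equation*}
and similarly the $g$-part from $d(x'^\top y)$ is $\mathbb{E}\int_0^T[\,y^\top g_2\eta' + z^\top g_3\eta' - x'^\top g_1\eta\,]\,dt$. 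Neither of these is ``linear-in-$\eta$ (resp.\ linear-in-$\eta'$)''; both mix $\eta$ and $\eta'$, because the $g_1$-term comes from the drift of the backward component (driven by the other input) while the $g_2,g_3$-terms come from the forward component (driven by the same input). Consequently, subtracting the two identities does \emph{not} produce $\langle\mathcal{A}_g\eta,\eta'\rangle=\langle\eta,\mathcal{A}_g\eta'\rangle$: with the abbreviations $a_1=-\mathbb{E}\int x^\top g_1\eta'$, $a_2=\mathbb{E}\int y^\top g_2\eta'$, $a_3=\mathbb{E}\int z^\top g_3\eta'$ and $b_1,b_2,b_3$ the primed analogues, the two It\^o identities read $a_1+b_2+b_3=-A$ and $b_1+a_2+a_3=-A$, so subtraction yields only $a_1-a_2-a_3=b_1-b_2-b_3$, whereas self-adjointness is $a_1+a_2+a_3=b_1+b_2+b_3$. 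These two relations together would force $a_1=b_1$ and $a_2+a_3=b_2+b_3$, but neither of those follows from what you have derived; you would need an independent argument (a third It\^o-type relation, or additional structure of $g$) to close the gap.

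It is worth noting that your argument does go through in the degenerate case used for the main results of this paper, where $\bar H$ has only the $(2,2)$ block nonzero and hence $g_1\equiv 0$, $g_3\equiv 0$: then $a_1=a_3=b_1=b_3=0$, the two It\^o identities collapse to $b_2=-A$ and $a_2=-A$, and $a_2=b_2$ follows immediately. But Lemma~\ref{lemma-peng-7.3} (and Peng's original Lemma 7.3) is stated for a general negative definite perturbation $\bar H$ with all blocks of $g$ potentially nonzero, and there the proposal as written is incomplete. A correct proof in the general case must account for the mixed structure of the It\^o cross-variation terms rather than assume they separate into a pure-$\eta$ side and a pure-$\eta'$ side.
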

Lemma \ref{lemma-peng-7.2} and Lemma \ref{lemma-peng-7.3}
means that the operator $\mathcal{A}_g$ is a positive operator on $M^2(0,T;\mathbb{R}^{3n})$ and it has only positive eigenvalues.  The eigenvalue problem \eqref{evp2} is closely  related to the eigenvalue problem of the operator $\mathcal{A}_g$.  In fact, for $\lambda\in \mathbb R$ such that
\begin{equation*}
\eta=\lambda\mathcal{A}_g\eta=\lambda g^\top\xi_\eta,
\end{equation*}
by the definition of $\xi_\eta$,  $  \xi_\eta$ is the solution to the following FBSDE:
\begin{equation}\label{functional-with-lambda-fbsde}
\left\{
\begin{aligned}
&\mathrm{d}x_t=\left[H_{2}(t)\xi+\lambda g_{2}(t)g^\top(t)\xi\right]\mathrm{d}t
  +\left[H_{3}(t)\xi+\lambda g_{3}(t)g^\top(t)\xi\right]\mathrm{d}B_t,  \indent t\in[0,T],        \\
&-\mathrm{d}y_t=\left[H_{1}(t)\xi+\lambda g_{1}(t)g^\top(t)\xi\right]\mathrm{d}t-z_t \mathrm{d}B_t,  \indent t\in[0,T],  \\
&x(0)=0,\indent y(T) =0.
\end{aligned}
\right.
\end{equation}
This means that $\lambda$ is an eigenvalue of problem \eqref{evp2}.

\begin{rem}\label{positive real number} By the above reasoning, $\lambda$ is an eigenvalue of \eqref{evp2} with negative perturbation $\bar{H}$ if and only if $\frac{1}{\lambda}$ is an eigenvalue of $\mathcal {A}_g$. By Lemma \ref{lemma-peng-7.2} and Lemma \ref{lemma-peng-7.3}, only a positive real number can be an eigenvalue of operator $\mathcal{A}_g$. It follows that, for negative perturbation $\bar{H}$, all the eigenvalues of \eqref{evp2} are positive.
\end{rem}

\subsection{Legendre transformation of stochastic Hamiltonian system}\label{intro-legendre-transfor-sec}
The material in this subsection is from \cite{peng}. For the convenience of readers, we give a brief introduction.
Consider the following stochastic Hamiltonian system:
\begin{equation}  \label{original-system-hamilton}
\left\{
\begin{aligned}
&\mathrm{d}x_t=\partial_yh(x_t,y_t,z_t)\mathrm{d}t
   +\partial_zh(x_t,y_t,z_t)\mathrm{d}B_t,  \indent  t\in[0,T],              \\
&-\mathrm{d}y_t=\partial_xh(x_t,y_t,z_t)\mathrm{d}t-z_t\mathrm{d}B_t, \indent  t\in[0,T],\\
& x(0)=x_0, \indent  y(T)=\partial_x\varPhi(x_T),
\end{aligned}
\right.
\end{equation}
where $h$: $\dbR^n\times \dbR^n \times \dbR^n \mapsto \dbR$ is a $C^2$
real function of $(x,y,z)$, $\varPhi$: $\dbR^n \mapsto \dbR$ is a $C^2$
real function of $x$.
We also assume that $\partial^2_{zz}h(x,y,z)\le -\beta I_n$ and $\partial^2_{xx}\varPhi(x)\ge  \beta I_n$ uniformly for $(x,y,z)\in \dbR^{3n}$.

On one hand, change the role of $(x_t, y_t)$, such that $(\tilde{x}_t, \tilde{y}_t)=(y_t, x_t)$.  On the other hand, take Legendre transformation for $h$ with respect to $z$, and for $\varPhi$ with respect to $x$ as follows:
\begin{eqnarray}
&& \tilde{h}(\tilde{x},\tilde{y},\tilde{z}) =  \inf_{z\in \mathbb{R}^n}\left\{\langle z,\tilde{z}\rangle
-h(\tilde{y},\tilde{x},z) \right\},   \notag      \\
 && \tilde{\varPhi}(\tilde{x})= \sup_{x \in \mathbb{R}^n} \left\{ \langle x,\tilde{x}\rangle -\varPhi(x)\right\}. \notag
\end{eqnarray}
The above  two steps derive a  dual Hamiltonian $\tilde{h}$ of original system \eqref{original-system-hamilton}.
Further, we have the following relations:
\begin{equation*}
 \tilde{z}=\partial_z h(\tilde{y},\tilde{x},z(\tilde{x},\tilde{y},\tilde{z})), \indent \tilde{x}=\partial_x\varPhi(x(\tilde{x})), \indent \forall \tilde{x},\tilde{y},\tilde{z}\in \mathbb{R}^n.
\end{equation*}
Most importantly, $(\tilde{x}_t,\tilde{y}_t,\tilde{z}_t)\triangleq(y_t,x_t,\partial_z h(x_t,y_t,z_t))$ satisfies the following stochastic Hamiltonian system:
\begin{equation}\label{dual-system-hamilton}
\left\{
\begin{aligned}
&\mathrm{d}\tilde{x}_t=\partial_{\tilde{y}} \tilde{h}(\tilde{x}_t,\tilde{y}_t,\tilde{z}_t)\mathrm{d}t
   +\partial_{\tilde{z}}\tilde{h}(\tilde{x}_t,\tilde{y}_t,\tilde{z}_t)\mathrm{d}B_t,  \indent  t\in[0,T],   \\
&-\mathrm{d}\tilde{y}_t=\partial_{\tilde{x}}\tilde{h}(\tilde{x}_t,\tilde{y}_t,\tilde{z}_t)\mathrm{d}t
-\tilde{z}_t\mathrm{d}B_t, \indent  t\in[0,T],\\
&\tilde{x}(0)=y_0, \indent  \tilde{y}(T)=\partial_{\tilde{x}}\tilde{\varPhi}(\tilde{x}_T).
\end{aligned}
\right.
\end{equation}
If, conversely, performing Legendre transformation to dual system \eqref{dual-system-hamilton}:
\begin{eqnarray*}
&& h(x,y,z)=\inf_{\tilde{z}\in \mathbb{R}^n}\{\langle z,\tilde{z}\rangle-\tilde{h}(y,x,\tilde{z})\},  \notag\\
&&  \varPhi (x)=\sup_{\tilde{x}\in \mathbb{R}^n}\{\langle x,\tilde{x}\rangle-\tilde{\varPhi}(\tilde{x})\}, \\
&& z=\partial_{\tilde{z}}\tilde{h}(y,x,\tilde{z}(x,y,z)), \  x=\partial_{\tilde{x}}\tilde{\varPhi}(\tilde{x}(x)), \indent  \forall x,y,z\in \mathbb{R}^n.  \notag
\end{eqnarray*}
we will reach the original Hamiltonian system \eqref{original-system-hamilton}.

In particular, for linear case, through Legendre transformation, the dual Hamiltonian $\tilde{H}$ of original Hamiltonian $H$ is
\begin{eqnarray*}
(\widetilde{H}_{ij})_{3\times3}=
\begin {bmatrix}
H_{23}H_{33}^{-1}H_{32}-H_{22}&H_{23}H_{33}^{-1}H_{31}-H_{21}&-H_{23}H_{33}^{-1}\\
H_{13}H_{33}^{-1}H_{32}-H_{12}&H_{13}H_{33}^{-1}H_{31}-H_{11}&-H_{13}H_{33}^{-1}\\
-H_{33}^{-1}H_{32}&-H_{33}^{-1}H_{31}&H_{33}^{-1}
\end{bmatrix},%\label{tilde-H-system}
\end{eqnarray*}
where each block $\widetilde{H}_{ij}, i,j=1,2,3,$ is a $n\times n$ matrix.
The relation between $z(t)$ and $\tilde{z}(t)$ is:
$$z(t)=-H_{33}^{-1}H_{32}\tilde{x}(t)-H_{33}^{-1}H_{31}\tilde{y}(t)
+H_{33}^{-1}\tilde{z}(t).$$

%%%%%%%%%%%%%%%%%%%%%%%%%%%%%%%%%%%%%%%%%%%%%%%%%%%%%%%%%%%%%%%%%%%%%%%%%%%%%%%%%%%%%%%%%%%%%%%%%%%%%%%%%%%%%%%%%%%%%%%%%%%%%%%%
%%%%%%%%%%%%%%%%%%%%%%%%%%%%%%%%%%%%%%%%%%%%%%%%%%%%%%%%%%%%%%%%%%%%%%%%%%%%%%%%%%%%%%%%%%%%%%%%%%%%%%%%%%%%%%%%%%%%%%%%%%%%%%%


\begin{thebibliography}{99}

%\parskip=-1mm

%\nobreak
%\bibitem{arnold}V.I. Arnold, Mathematical Methods of Classical Mechanics, Second edition,
%Springer-Verlag, New York, 1989,  xvi+516 pp. ISBN: 0-387-96890-3.
\bibitem{Antonelli} F. Antonelli, Backward-forward stochastic differential equations.  \emph{Ann. Appl. Probab.} 3 (1993), no. 3, 777-793.



\bibitem{Bensoussan}A. Bensoussan, Lectures on stochastic control, pp. 1-62. Nonlinear filtering and stochastic control.  Lecture Notes in Mathematics, 972. \emph{Springer-Verlag, Berlin-New York,} 1982. viii+292 pp.




\bibitem{Bismut} J. M. Bismut, Conjugate convex functions in optimal stochastic control. \emph{J. Math. Anal. Appl.} 44 (1973), 384-404.


\bibitem{Delarue} F. Delarue, On the existence and uniqueness of solutions to FBSDEs in a non-degenerate case. \emph{Stochastic Process. Appl.} 99 (2002), no. 2, 209-286.


\bibitem{hupeng}Y. Hu, S. Peng, Solution of forward-backward stochastic differential equations.  \emph{Probab. Theory Related Fields} 103 (1995), no. 2, 273-283.



\bibitem{JW} G. Jing, P. Wang,  A note on "Problem of eigenvalues of stochastic Hamiltonian systems with boundary conditions". \emph{ C. R. Math. Acad. Sci. Paris}, accepted.

\bibitem{MPY}J. Ma, P. Protter and J. Yong,   Solving forward-backward stochastic differential equations explicitly - a four step scheme.  \emph{Probab. Theory Related Fields} 98 (1994), no. 3, 339-359.



\bibitem{MWZZ} J. Ma, Z. Wu, D. Zhang and J. Zhang, On well-posedness of forward-backward SDEs - a unified approach.  \emph{Ann. Appl. Probab.} 25 (2015), no. 4, 2168-2214.



\bibitem{MY} J. Ma, J. Yong,  Forward-backward stochastic differential equations and their applications. Lecture Notes in Mathematics, 1702. \emph{Springer-Verlag, Berlin,} 1999. xiv+270 pp.





\bibitem{PT}E. Pardoux, S. Tang,  Forward-backward stochastic differential equations and quasilinear parabolic PDEs. \emph{Probab. Theory Related Fields} 114 (1999), no. 2, 123-150.



\bibitem{peng2} S. Peng,  Backward stochastic differential equations and applications to optimal control.  \emph{Appl. Math. Optim.} 27 (1993), no. 2, 125-144.



\bibitem{peng} S. Peng, Problem of eigenvalues of stochastic Hamiltonian systems with boundary conditions. \emph{Stochastic Process. Appl.} 88 (2000), no. 2, 259-290.


\bibitem{wupeng}S. Peng, Z. Wu, Fully coupled forward-backward stochastic differential equations and applications to optimal control.  \emph{SIAM J. Control Optim.} 37 (1999), no. 3, 825-843.


\bibitem{wuzhen-wanghaiyang} H. Wang, Z. Wu, Eigenvalues of stochastic Hamiltonian systems driven by Poisson process with boundary conditions.  \emph{Bound. Value Probl.} 2017, Paper No. 164, 20 pp.


\bibitem{Y}J. Yong, Forward-backward stochastic differential equations with mixed initial-terminal conditions.
\emph{Trans. Amer. Math. Soc.} 362 (2010), no. 2, 1047-1096.



\bibitem{Y2}J. Yong,  Linear forward-backward stochastic differential equations. \emph{Appl. Math. Optim.} 39 (1999), no. 1, 93-119.



\bibitem{Yongjiongmin3}J. Yong,  Linear forward-backward stochastic differential equations with random coefficients.  \emph{Probab. Theory Related Fields} 135 (2006), no. 1, 53-83.



\bibitem{zhangjianfeng1}J.  Zhang,    Backward stochastic differential equations. From linear to fully nonlinear theory.  Probability Theory and Stochastic Modelling, 86. \emph{Springer, New York,} 2017. xv+386 pp.



\bibitem{zhangjianfeng2}J.  Zhang,  The wellposedness of FBSDEs.  \emph{Discrete Contin. Dyn. Syst. Ser. B} 6 (2006), no. 4, 927-940.







\end{thebibliography}
\end{document}